\DeclareMathAlphabet{\mathpzc}{OT1}{pzc}{m}{it}
\newtheorem{theorem}{Theorem}[section]
\newtheorem*{claim*}{Claim}
\newtheorem{lemma}[theorem]{Lemma}
\newtheorem{lem}[theorem]{Lemma}
\newtheorem{corollary}[theorem]{Corollary}
\newtheorem{proposition}[theorem]{Proposition}
\newtheorem{prop}[theorem]{Proposition}
\theoremstyle{definition}
\newtheorem{definition}[theorem]{Definition}
\theoremstyle{remark}
\newtheorem{remark}[theorem]{Remark}
\numberwithin{equation}{section}
\newcommand{\op}{\operatorname}
\newcommand{\core}{\op{core}}
\newcommand{\hull}{\op{hull}}
\newcommand{\stab}{\op{stab}}
\newcommand{\C}{\mathbb C}
\newcommand{\R}{\mathbb R}
\newcommand{\Z}{\mathbb Z}
\newcommand{\N}{\mathbb N}
\renewcommand{\H}{\mathbb H}
\renewcommand{\S}{\mathbb S}
\newcommand{\M}{\mathsf M}
\newcommand{\horo}{\mathpzc{h}}
\newcommand{\Horo}{\mathpzc{H}}
\newcommand{\inte}{\op{int }}
\newcommand{\Isom}{\op{Isom}}
\newcommand{\PSL}{\op{PSL}}
\newcommand{\RF}{\mathrm{RF}}
\newcommand{\F}{\mathrm{F}}
\newcommand{\V}{\mathsf{V}}
\newcommand{\FM}{\mathrm{F}\mathsf{M}}
\newcommand{\RFM}{\mathrm{RF}\mathsf{M}}
\newcommand{\RFKM}{\mathrm{RF}_{k} \mathsf{M}}
\newcommand{\RFPM}{\mathrm{RF}_+\mathsf{M}}
\newcommand{\BFM}{\mathrm{BF}\mathsf{M}}
\newcommand{\ov}{\overline}
\newcommand{\cal}{\mathcal}
\newcommand{\Ga}{\Gamma}
\newcommand{\La}{\Lambda}
\newcommand{\ba}{\backslash}
\newcommand{\bb}{\mathbb}
\newcommand{\be}{\begin{equation}}
\newcommand{\ee}{\end{equation}}
\newcommand{\T}{\mathsf{T}}
\newcommand{\so}{\op{SO}^{\circ}}
\title[Horocycles in geometrically finite hyperbolic 3-manifolds]
{Horocycles in hyperbolic 3-manifolds with round Sierpi\'nski limit sets}
\author{Dongryul M. Kim}
\address{
	Department of Mathematics, Yale University, New Haven, CT 06511, USA
}
\email{
	dongryul.kim@yale.edu
}
\author{Minju Lee}
\address{
	Department of Mathematics, University of Chicago, Chicago, IL 60637, USA
}
\email{
	minju1@uchicago.edu
}
\begin{document}
\begin{abstract}

Let $\M$ be a geometrically finite hyperbolic 3-manifold whose limit set is a round Sierpi\'nski gasket, i.e. $\M$ is geometrically finite and acylindrical with a compact, totally geodesic convex core boundary. In this paper, we classify orbit closures of the 1-dimensional horocycle flow on the frame bundle of $\M$. As a result, the closure of a horocycle in $\M$ is a properly immersed submanifold. This extends the work of McMullen-Mohammadi-Oh where $\M$ is further assumed to be convex cocompact.

\end{abstract}

\maketitle


%
%

\section{Introduction}

Let $\M$ be a complete hyperbolic 3-manifold, and let $\chi \subset \M$ be an isometrically immersed copy of \(\mathbb{R}\) with torsion zero and geodesic curvature 1, referred to as a \emph{1-dimensional horocycle} or simply a \emph{horocycle}. Shah \cite{Shah1992master} and Ratner \cite{ratner} classified the closure $\ov{\chi} \subset \M$ in the case $\op{Vol}(\M) < \infty$, proving that $\ov{\chi}$ is a properly immersed submanifold of $\M$.
This classification was generalized to infinite-volume hyperbolic 3-manifolds by McMullen-Mohammadi-Oh in \cite{McMullen2016horocycles}, where they considered convex cocompact hyperbolic 3-manifolds with round Sierpi\'nski limit sets.

We call $\M$ \emph{convex cocompact} if its convex core $\core(\M)$ is compact, and \emph{geometrically finite} if the unit neighborhood of $\core(\M)$ has finite volume.
We say that $\M$ has a \emph{round Sierpi\'nski limit set} if the limit set $\La \subset \widehat \C$ of the Kleinian group $\pi_1(\M) <  \PSL_2(\C)$ is a round Sierpi\'nski gasket, i.e., 
$$\widehat \C - \La = \bigcup_{i = 1}^{\infty} B_i$$
is a countable union of round open disks $B_i \subset \widehat \C$ with disjoint closures (see Figure \ref{fig.gasket}).

A geometrically finite hyperbolic 3-manifold $\M$ has a round Sierpi\'nski limit set if and only if $\core(\M)$ has a non-empty interior and a compact, totally geodesic boundary.
Moreover, such $\M$ is acylindrical\footnote{A 3-manifold is called acylindrical if its compact core (also called Scott core) has incompressible boundary and every essential cylinder therein is boundary-parallel.} and has no rank-1 cusps (Lemma \ref{lem.onlyrank2}). Indeed, as shown by Thurston \cite{Thurston1986hyperbolic} and McMullen \cite[Corollary 4.3]{McMullen_iteration}, every geometrically finite, acylindrical hyperbolic 3-manifold $\M$ with compact $\partial \core(\M)$ is quasiconformally conjugate to a unique one with a round Sierpi\'nski limit set.

\begin{figure}[h]
\vspace{-3.5em}
\includegraphics[scale=0.35]{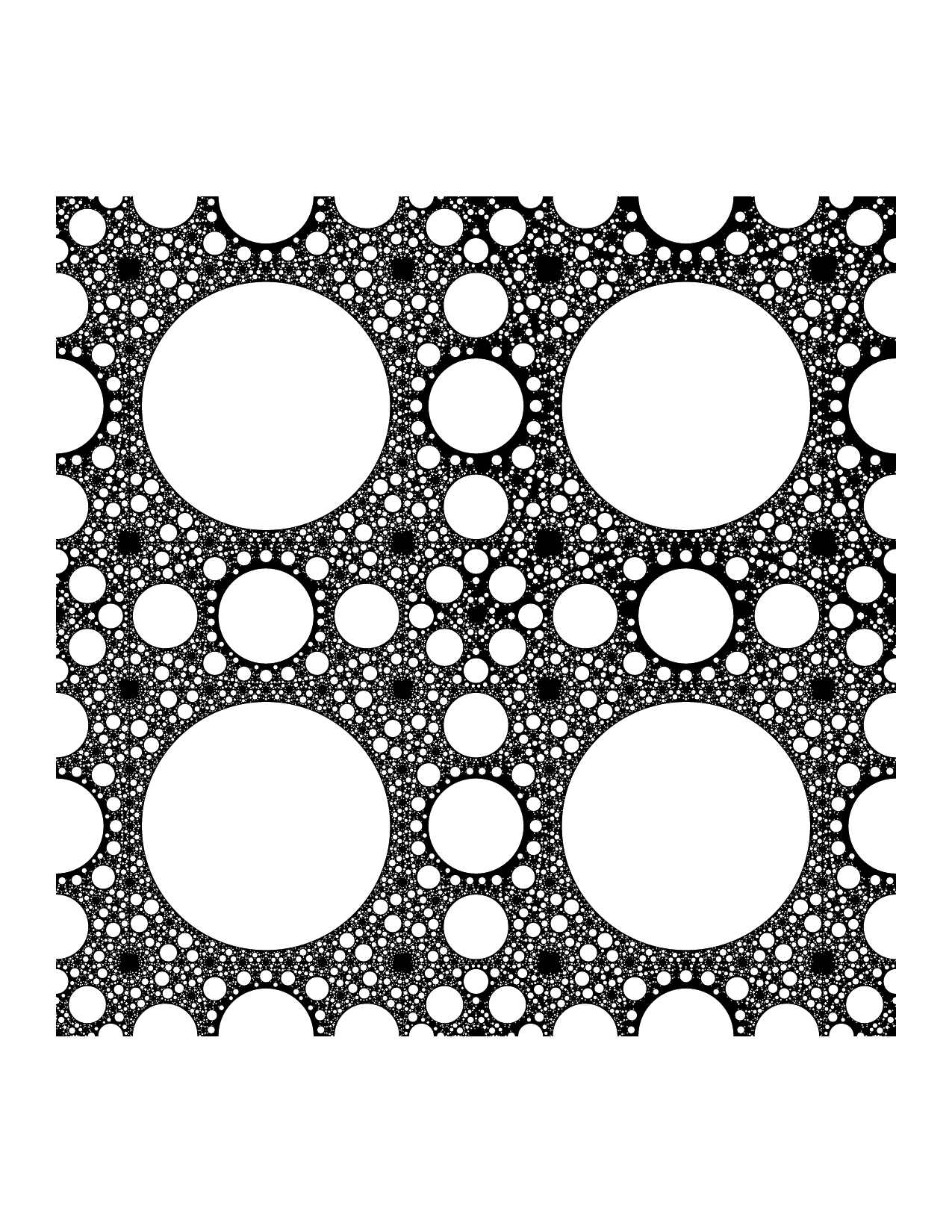}
\vspace{-3.5em}
        \caption[A round Sierpi\'nski limit set on $\widehat \C - \{\infty\}$ ]{A round Sierpi\'nski limit set with rank-2 parabolic limit points, drawn  on $\widehat \C - \{\infty\}$. The point $\infty$ is also a parabolic limit point. \footnotemark} \label{fig.gasket}
\end{figure}
 \footnotetext{Image credit: Yongquan Zhang}

Convex cocompact hyperbolic 3-manifolds with round Sierpi\'nski limit sets have been the only known infinite-volume examples where the topological behavior of closures of horocycles is fully understood \cite{McMullen2016horocycles}. In this paper, we extend the classification to geometrically finite 3-manifolds:

\begin{theorem} \label{thm.mainmnfd}
    Let $\M$ be a geometrically finite hyperbolic 3-manifold with a round Sierpi\'nski limit set. For any 1-dimensional horocycle $\chi \subset \M$, one of the following holds:
    \begin{enumerate}
        \item $\chi = \ov{\chi}$ is closed.
        \item $\ov{\chi}$ is a 2-dimensional compact horosphere.
        \item $\ov{\chi} $ is a properly immersed 2-manifold, parallel to a totally geodesic surface $\mathsf{S} \subset \M$.
        \item $\ov{\chi}$ is the entire 3-manifold $\M$.
    \end{enumerate}
\end{theorem}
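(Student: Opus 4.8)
The plan is to pass to the frame bundle $\FM = \Ga\ba G$, where $G = \PSL_2(\C)$ and $\Ga = \pi_1(\M)$, so that $\chi$ is the image under the projection $\FM\to\M$ of a single orbit $xU$; here $U = \{u_t\}_{t\in\R}$ is a one-parameter unipotent subgroup of $G$, contained both in the $2$-dimensional horospherical subgroup $N\cong\C$ and in a copy $H\cong\PSL_2(\R)$ (the frame bundle of a totally geodesic plane), with the diagonal subgroup $A < H$ generating the geodesic flow. It suffices to classify $\ov{xU}\subset\FM$ and project. The two ends of $\chi$ escape to a single point $\eta\in\widehat\C$, the fixed point of $U$ on $\partial\H^3$ (well defined up to $\Ga$), and the classification is organized by the position of $\eta$ with respect to $\La$.

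Two cases are immediate. If $\eta\notin\La$, then a small horoball based at $\eta$ is disjoint from $\hull(\La)$ and embeds into a funnel end of $\M$, so $\chi$ is a properly embedded line --- case (1). If $\eta$ is a parabolic fixed point, it is a bounded rank-$2$ parabolic point by the absence of rank-$1$ cusps (Lemma \ref{lem.onlyrank2}), so $xN$ descends to a compact torus $\Z^2\ba N$ on which $xU$ is a linear flow: hence closed (case (1)), or equidistributed, giving $\ov{xU} = xN$, a compact horosphere --- case (2). From now on $\eta$ is a conical limit point. This forces $xU$ to return to a fixed compact subset $\mathcal K\subset\FM$ in forward time (a non-divergence statement, discussed below), so $xU$ is not properly embedded; since a closed recurrent $U$-orbit would be periodic --- putting $\eta$ among the parabolic points --- we get $\ov{xU}\supsetneq xU$.

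Suppose next that $\chi$ is asymptotic to a totally geodesic surface, meaning that the totally geodesic plane $\widetilde P\subset\H^3$ containing a lift of $\chi$ either descends to a closed geodesic plane of $\M$ or is asymptotic at $\eta$ to such a plane $P$; write $\mathsf S = \stab_\Ga(P)\ba P$. Then $\widetilde P = n\cdot P$ for some $n$ in the one-parameter horospherical subgroup moving perpendicular to $\partial P$, and translating $xU$ back by $n^{-1}$ lands it inside the frame bundle of $\mathsf S$ as an ordinary horocycle. By Hedlund's minimality theorem --- valid on the compact surfaces $\mathsf S_i = \stab_\Ga(P_i)\ba P_i$ attached to the disks $B_i$, since $\partial\core(\M)$ is compact and totally geodesic --- together with the elementary description of horocycle orbits on a geometrically finite hyperbolic surface, $\ov{xU\cdot n^{-1}}$ is (the frame bundle of the convex core of) $\mathsf S$; pushing forward by $n$ shows $\ov\chi$ is $\mathsf S$ itself or a properly immersed surface equidistant, hence parallel, to it --- case (3). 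In the only remaining case --- $\eta$ conical and $\chi$ not asymptotic to any totally geodesic surface --- one runs the polynomial-drift argument of Dani--Margulis and Ratner: picking $s_n\to\infty$ with $xu_{s_n}\to y\in\mathcal K$ and using that distinct $U$-orbits return near $y$, one produces a new one-parameter subgroup $L$ --- in the highest-weight direction for the adjoint action of $U$, a subgroup of the opposite horospherical $\ov N$ --- leaving $\ov{xU}$ invariant at $y$. The group generated by $U$ and $L$ is strictly larger than $U$; iterating, and using that every closed orbit of an intermediate subgroup ($N$, $NA$, $H$, $\dots$) is a compact cusp torus or a geodesic plane --- neither of which contains $xU$ in the present case --- the induction terminates with $\ov{xU} = \FM$, so $\ov\chi = \M$: case (4).

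The genuine difficulty, absent from the convex cocompact setting of \cite{McMullen2016horocycles}, is the interaction of the horocycle flow with the rank-$2$ cusps, and it enters the last two cases in two ways. First, one must show that a horocycle based at a conical limit point genuinely returns to a fixed compact part of $\FM$: this requires a quantitative non-divergence estimate for the one-dimensional unipotent flow near the cusp neighborhoods, in the spirit of Dani--Margulis but adapted to the geometrically finite $\M$. Second, in the polynomial-drift step one must ensure that the perturbed orbits do not drift into the cusp region --- where the drift estimates fail --- and that the limit points fed into the induction stay in a fixed compact subset of $\FM$. This cusp analysis, together with the new orbit-closure type recorded in case (2), is precisely what the present theorem adds to its convex cocompact predecessor; the rest of the argument follows the strategy of \cite{McMullen2016horocycles} and of the theory of unipotent flows.
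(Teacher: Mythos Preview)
Your high-level organization --- lift to $\FM$, split on whether $\eta=g^+$ lies in $\La$, is parabolic, or is conical --- matches the paper's, and your treatment of the first two cases and of the sub-case ``$xv$ lies on a closed $H$-orbit'' (your case~(3)) is essentially correct, modulo the subtlety that one must check a conical limit point of $\Ga$ is also conical for the surface group $H\cap\Ga^{gv}$ (this is Corollary~\ref{cor.uinsurface}, via \cite{Oh2013equidistribution}).

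The genuine gap is in your final case, where $\eta$ is conical and no $V$-translate of $x$ sits on a closed $H$-orbit. First, the drift direction is wrong: for $U<\PSL_2(\C)$ the unipotent blowup (Lemma~\ref{lem.qr2}, Lemma~\ref{lem.recurrence}) lands in $AV$, the normalizer direction, not in the opposite horospherical $\ov N$ as you write. Second, and more seriously, the phrase ``iterating \dots\ the induction terminates with $\ov{xU}=\FM$'' is not a proof in this infinite-volume setting (and the correct target is $\RFPM$, not $\FM$). The actual argument requires several non-trivial ingredients you do not mention: one passes to a relatively $U$-minimal subset and obtains a one-parameter \emph{semi}group $L_+<AV$ with $YL_+\subset Y$ (Lemma~\ref{lem.newdynamics}); a delicate case analysis (Theorem~\ref{thm.xu3}, Theorem~\ref{thm.clu2}, using the expansion of horospheres in Section~\ref{sec.exphoro}) then produces inside $\ov{xU}$ a full $vAUv^{-1}$-orbit; next one must invoke the Benoist--Oh classification of $H$-orbit closures (Theorem~\ref{thm.plane}) to locate a \emph{closed} $H$-orbit inside $\ov{xAU}$ (Lemma~\ref{lem.ua3}); and finally Section~\ref{sec.xwh} --- itself a substantial argument using thick recurrence times and Lemma~\ref{lem.yvn} --- upgrades ``contains a closed $H$-orbit'' to ``equals that orbit or equals $\RFPM$''. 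None of these steps is a straightforward Ratner-style induction on intermediate subgroups, and the Benoist--Oh input in particular is indispensable and absent from your sketch.
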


\subsection*{Horocycle flows on frame bundles} As in \cite{McMullen2016horocycles}, Theorem \ref{thm.mainmnfd} is a consequence of the classification of orbit closures of the horocycle flow on the frame bundle of $\M$. To be precise, let $G = \PSL_2 (\C) = \Isom^+(\H^3)$ and consider the following subgroups: 
 $$
 \begin{aligned}
 H & = \PSL_2(\R), &
     N & = \left\{ n_z = \begin{pmatrix} 1 & z \\ 0 & 1 \end{pmatrix} : z \in \C \right \}, \\
 U &= \{n_s : s \in \R \}, \text{ and} &  V & = \{n_{is} : s \in  \R \}.
 \end{aligned}
 $$

For a hyperbolic 3-manifold $\M = \Ga \ba \H^3$ with an associated Kleinian group $\Ga < G$, we have the identification of its frame bundle $\FM$ with $\Ga \ba G$:
$$
\FM = \Ga \ba G.
$$
Then every (oriented) horocycle $\chi \subset \M$ lifts uniquely to a $U$-orbit $xU \subset \FM$ for some $x \in \FM$ and vice versa. 

We denote by $$\RFPM \subset \FM$$
the set of all frames directed toward $\core(\M)$ under the frame flow;  that is, their forward trajectories project to geodesic rays in $\M$ that remain within a bounded distance of $\core(\M)$. For a precise definition, see \eqref{eqn.rfpm}. This set $\RFPM$ is $U$-invariant, and any $U$-orbit outside $\RFPM$ is a properly immersed copy of $\R$. Hence, interesting dynamics appear only within $\RFPM$.
We now state our classification of $U$-orbit closures in $\FM$.

\begin{theorem} \label{thm.main}
    Let $\M$ be a geometrically finite hyperbolic 3-manifold with a round Sierpi\'nski limit set. Then for any $x \in \FM$, one of the following holds:
    \begin{enumerate}
        \item $xU$ is closed.
        \item $\ov{xU} = xN$ which is compact.
        \item $\ov{xU} = xvHv^{-1} \cap \RFPM$ for some $v \in V$.
        \item $\ov{xU} = \RFPM$.
    \end{enumerate}
\end{theorem}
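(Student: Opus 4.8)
The plan is to follow the now-standard strategy for unipotent-flow orbit closures in the infinite-volume setting pioneered in \cite{McMullen2016horocycles}, adapting each step to accommodate the rank-2 cusps. Fix $x \in \RFPM$ with $xU$ not closed; we must show $\ov{xU}$ is one of the homogeneous pieces in (2)--(4). The starting point is a return-time/recurrence analysis: because $\M$ is geometrically finite, the $U$-action on $\RFPM$ is \emph{non-wandering}, and one shows that $\ov{xU} \cap \RFPM$ contains a point whose $U$-orbit is \emph{recurrent} to a compact set of $\RFPM$ — the cusp neighborhoods must be handled by the thick-thin decomposition, using that all cusps have rank 2 (Lemma \ref{lem.onlyrank2}), so that a $U$-orbit cannot escape into a cusp and the $N$-orbit through a cusp point is compact, giving case (2). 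Away from the cusps, one has genuine polynomial divergence of nearby $U$-orbits.

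The engine of the argument is then the standard "extra invariance via polynomial drift": given two nearby points of $\ov{xU}$ on different $U$-orbits, the relative displacement, tracked along long $U$-segments, accumulates (after rescaling time and passing to a limit) to an element of the one-parameter group transverse to $U$ inside $N$, i.e. an element of $V$, or to an element of $H = \PSL_2(\R)$. Concretely, I would show that $\ov{xU}$ is invariant under a connected subgroup $L$ with $U \subsetneq L$; the possibilities are $L = N$, $L = vHv^{-1}$ for some $v \in V$, or $L = G$. The case $L = N$ forces $\ov{xU} \supseteq xN$, and since in a geometrically finite manifold with round Sierpi\'nski limit set every $xN$ meeting $\RFPM$ is compact (the corresponding horosphere projects to a compact surface, as $\La$ contains no "free" region off a circle packing), one gets $\ov{xU} = xN$, case (2). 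If $L = G$, a density/minimality argument combined with the fact that $\RFPM$ is the unique nontrivial closed $G$-minimal-type set for this geometry yields $\ov{xU} = \RFPM$, case (4).

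The remaining and most delicate case is $L = vHv^{-1}$: here $\ov{xU} \supseteq xv H v^{-1} \cap \RFPM$, and one must rule out that $\ov{xU}$ is strictly larger, i.e. that it picks up further transverse invariance and jumps up to case (4). This is precisely where the round Sierpi\'nski hypothesis enters decisively: the boundary of $\core(\M)$ is a disjoint union of totally geodesic surfaces, so the associated closed $H$-orbits in $\FM$ are themselves properly immersed and, crucially, the collection of such totally geodesic planes is \emph{rigid} — any limit of $H$-translates either stays in this countable family or sweeps out everything, by a Ratner-type rigidity / avoidance argument for the disjoint round disks $B_i$ (the "no accumulation of boundary circles except along $\La$" property of a round gasket). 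So if $\ov{xU} \cap \RFPM$ strictly contains a closed $vHv^{-1}$-orbit, one produces a one-parameter family of such totally geodesic planes accumulating in a forbidden way, a contradiction; hence $\ov{xU} = xvHv^{-1} \cap \RFPM$, which is the properly immersed 2-manifold of case (3) and, projected to $\M$, the surface $\mathsf S$ parallel to a component of $\partial \core(\M)$. Finally, Theorem \ref{thm.mainmnfd} follows by pushing the frame-bundle classification down to $\M$ via the projection $\FM \to \M$.

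The main obstacle I anticipate is the cusp analysis feeding into the recurrence step: unlike the convex cocompact case of \cite{McMullen2016horocycles}, one must control $U$-orbit excursions into rank-2 cusp neighborhoods and show that either the orbit returns to the thick part with good density, or it limits onto a compact $xN$. This requires quantitative non-divergence estimates for the $U$-flow near the cusps (of Dani--Margulis type, adapted to the geometrically finite frame bundle) together with the observation that in a cusp of rank 2 the transverse direction $V$ already lies in the stabilizer of the horosphere, so any recurrence to the cusp immediately upgrades $U$-invariance to $N$-invariance — this is the mechanism producing case (2) and must be set up carefully.
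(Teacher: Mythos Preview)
Your outline has the right overall shape, but there are two concrete errors that would derail the argument.

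First, your treatment of the $N$-case is based on a false claim. You write that ``every $xN$ meeting $\RFPM$ is compact''; this is not true. By Ferte's dichotomy (Lemma~\ref{lem.ferte}), for $x=[g]\in\RFPM$ the orbit $xN$ is compact only when $g^+$ is a parabolic limit point; when $g^+$ is conical (the generic case), $\ov{xN}=\RFPM$. So finding $N$-invariance does \emph{not} give case~(2) in general --- it typically gives case~(4). The actual role of case~(2) in the paper is much more limited: it arises only when $xN$ itself is compact, and the argument must separately handle the situation where $\ov{xU}$ \emph{meets} a compact $N$-orbit without $xN$ being compact (Section~\ref{sec.xwn}), using Shah's polynomial blowup (Lemma~\ref{lem.qr2}) to produce a $vAUv^{-1}$-orbit rather than $N$-invariance.

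Second, your dichotomy ``$L\in\{N, vHv^{-1}, G\}$'' is not what the drift argument actually yields, and your endgame for case~(3) is backwards. The one-parameter subgroups produced by polynomial divergence lie in $AV$ (Lemma~\ref{lem.1psg}: either $V$ or $vAv^{-1}$), not in $H$ or $G$; one must then upgrade a $vAUv^{-1}$-orbit to a closed $H$-orbit piece $yvHv^{-1}\cap\RFPM$ via Benoist--Oh's classification of $H$-orbit closures (Theorem~\ref{thm.plane}, Corollary~\ref{cor.ua2}) --- a genuinely nontrivial input you do not invoke. And once $yvHv^{-1}\cap\RFPM\subset X$, the paper does \emph{not} derive a contradiction from ``accumulation of planes in a forbidden way''; rather, it shows constructively (Lemma~\ref{lem.yvn}, Propositions~\ref{prop.yh2} and~\ref{prop.yh1}) that if $X$ is strictly larger one can translate the $H$-orbit by an unbounded family $v_n\in V$ inside $X$, and the $N$-invariance picked up in the limit together with the expansion of horospheres (Lemma~\ref{lem.eqd}, Proposition~\ref{prop.classifyhoro}) forces $X=\RFPM$. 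Your ``rigidity of the countable family of boundary planes'' heuristic does not supply this, and in any case the closed $H$-orbit appearing in~(3) need not be a boundary component of $\core(\M)$.
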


Note that $N$-orbits and $H$-orbits in $\FM$ project to 2-dimensional horospheres in $\M$ and the images of totally geodesic immersions of a hyperbolic plane into $\M$, respectively. In particular, a compact $N$-orbit in $\FM$ corresponds to a compact horosphere in $\M$, and a closed $H$-orbit in $\FM$ corresponds to a totally geodesic plane in $\M$. Therefore, Theorem \ref{thm.mainmnfd} follows from Theorem \ref{thm.main}.

When $\M$ is convex cocompact, Thoerem \ref{thm.main} was proved by McMullen-Mohammadi-Oh \cite{McMullen2016horocycles} and (2) does not occur in that case. This was extended by Lee-Oh \cite{LO_orbit} to higher-dimensional convex cocompact hyperbolic manifolds whose convex cores have non-empty interiors and totally geodesic boundaries. In their work, they classified orbit closures of any connected, closed subgroup of
$\so(n, 1) = \Isom^+(\H^n)$ generated by unipotent elements.

\begin{remark}
    We emphasize that $U < G$ is a \emph{non-maximal} unipotent subgroup, i.e. $U$ is not a horospherical subgroup of $G$. This non-maximality introduces a fundamental difficulty in studying behavior of $U$-orbits. For these reasons, manifolds with round Sierpi\'nski limit sets, or equivalently, convex cores with compact, totally geodesic boundaries, were considered in \cite{McMullen2016horocycles} and \cite{LO_orbit} as well as in this paper.

    Indeed, the orbit closure of a horospherical subgroup has been classified for \emph{any} geometrically finite hyperbolic manifold by Dal'bo \cite{Dalbo2000topologie}, Ferte \cite{ferte_horosphere}, and Winter \cite{Winter2015mixing}, without requiring any additional geometric assumption.

\end{remark}

\subsection*{On the proof}
We briefly outline our strategy. We adapt the idea of McMullen-Mohammadi-Oh \cite{McMullen2016horocycles}, which uses the classification of closures of geodesic planes to classify closures of horocycles, to our setting. On the other hand, the presence of cusps poses an obstacle to directly applying the arguments in \cite{McMullen2016horocycles}. To address this, we incorporate techniques developed by Dani-Margulis \cite{Dani1989} and Shah \cite{Shah1992master} as well as the specific features of a limit set,
which ensure that every compact horocycle is contained within a compact horosphere.

Given a horocycle $\chi \subset \M$, we assume that $\chi$ is neither closed nor is its closure $\ov{\chi}$ a compact horosphere.
We then show that its closure $\ov{\chi}$ is a surface parallel to a closed geodesic plane or equal to $\M$. Our proof proceeds in two major steps:

The first step is to prove that, under the given hypothesis, $\ov{\chi}$ contains a surface in $\M$ equidistant from a geodesic plane (Proposition \ref{prop.main1}). 
This step relies on the classification of closures of geodesic planes by Benoist-Oh \cite{Benoist2022geodesic}. To achieve this, we consider the following two cases: 
\begin{itemize}
    \item[(a)]  $\ov{\chi}$ contains a compact horocycle $\chi_0 \subset \M$;
    \item[(b)]  $\ov{\chi}$ does not contain any compact horocycle in $\M$.
\end{itemize}

A key observation in handling case (a) is that every compact horocycle is contained in a compact horosphere due to the specific feature of a limit set.  Using the unipotent blowup developed in (\cite{Dani1989}, \cite{Shah1992master}), we scatter $\chi_0$ along geodesics or horospheres and deduce that $\overline{\chi}$ contains either a surface or a compact horosphere (Corollary \ref{cor.clu1}). Then employing the expansion of horospheres, we obtain that $\ov{\chi}$ always contains a surface (Theorem \ref{thm.clu2}). We remark that this is the place where the presence of cusp introduces an obstruction to directly adapting the arguments in \cite{McMullen2016horocycles}.

To address case (b), we utilize the notion of relatively $U$-minimal sets introduced in \cite{McMullen2017geodesic}. As shown in \cite{Benoist2022geodesic}, horocycles in $\M$ exhibit recurrence to certain compact subsets. Based on the recurrence, we adapt the approach of \cite{McMullen2016horocycles} to our setting and show that $\ov{\chi}$ is scattered along a geodesic ray or a horosphere (Lemma \ref{lem.newdynamics}). From this, we deduce that $\ov{\chi}$ contains a surface (Theorem \ref{thm.xu3}).

As the second step, we show that if $\ov{\chi}$ contains a surface in $\M$, then either $\ov{\chi}$ is entirely contained within the surface, or the surface is scattered along a horosphere within $\ov{\chi}$, from which the conclusion follows. In this step, we use the recurrence properties of horocycles established in \cite{Benoist2022geodesic} and 
the unipotent blowup for such horocycles (\cite{McMullen2017geodesic}, \cite{McMullen2016horocycles}).

\subsection*{Geodesic planes}
As mentioned above, we heavily use Besnoist-Oh's classification of closures of geodesic planes \cite{Benoist2022geodesic} to classify closures of horocycles.
For other works classifying closures of geodesic planes in infinite-volume hyperbolic manifolds, see (\cite{McMullen2017geodesic}, \cite{LO_orbit}, \cite{McMullen2022geodesic}, \cite{zhang_existence}, \cite{Benoist2022geodesic}, \cite{Khalil_planes}, \cite{TZ_geodesic}, \cite{jeoung2023operatorname}).

\subsection*{Open question}
It is an open question whether a similar classification of closures of horocycles holds for a general geometrically finite (acylindrical) hyperbolic 3-manifold.

\subsection*{Structure of the paper}
\begin{itemize}
    \item In Section \ref{sec.background} and Section \ref{sec.3mnfd}, we fix notations and terminologies used throughout the paper.
    \begin{itemize}
        \item Section \ref{sec.background} is about subgroups of $\PSL_2(\C)$, and
        \item Section \ref{sec.3mnfd} is about Kleinian groups and geometrically finite 3-manifolds. 
    \end{itemize} 
    \item Section \ref{sec.fn} is devoted to the recurrence of horocycle flows and the unipotent blowup lemmas. 
    \item In Section \ref{sec.exphoro}, we explain the expansion of horospheres and deduce properties of horocycles contained in an expanding sequence of horospheres by applying unipotent blowup. 
    \item Closed geodesic planes and the horocycles contained within them are discussed in Section \ref{sec.geodesicplane}.

    \item In Section \ref{sec.xwh}, we prove Theorem \ref{thm.main} for the closure of a horocycle that contains a closed geodesic plane. 

    \item In Section \ref{sec.xwn}, we classify closures of horocycles intersecting compact horospheres.

    \item The classification of closures of horocycles that do not contain any compact horocycle is addressed in Section \ref{sec.xwou}.

    \item Finally, we prove Theorem \ref{thm.main} in Section \ref{sec.fin}.

\end{itemize}

\subsection*{Acknowledgements}
We thank our advisor, Professor Hee Oh, for introducing us to this topic, suggesting this problem, and providing invaluable inspiration, insightful discussions, and many helpful comments on earlier drafts of this paper.
We also thank Yongquan Zhang for providing us with a beautiful image (Figure \ref{fig.gasket}). Finally, we thank the referee for their helpful comments and their careful reading of the original manuscript.


\section{Subgroups of $\PSL_2(\C)$}\label{sec.background}

In this section, we introduce basic notions and fix notations for subgroups of $\PSL_2(\C)$ that we use throughout the paper.
We mainly use the upper half-space model for the hyperbolic 3-space $\H^3 = \{ (z, t) \in \C \times \R : t > 0 \}$ whose boundary is the Riemann sphere $\widehat \C = \C \cup \{\infty\}$. We fix a basepoint $o = (0, 1) \in \H^3$. The group of orientation-preserving isometries on $\H^3$ is identified with $\PSL_2(\C)$, and its action on $\H^3$ extends to a conformal action of $\PSL_2(\C)$ on $\widehat \C$ given by linear fractional transformations. Including the ones in the introduction, we fix the following notations for subgroups of $\PSL_2(\C)$:
\be \label{eqn.notation}
\begin{aligned} 
    G & := \PSL_2(\C) \\
    K & := \stab_G(o) \cong \op{PSU}(2)\\
  H & := \left\{ \begin{pmatrix} a & b \\ c & d \end{pmatrix} \in G : a, b, c, d \in \R \right\} \cong \PSL_2(\R) \\
  A & : = \left\lbrace a_t := \begin{pmatrix}
    e^{t/2} & 0 \\
    0 & e^{-t/2}
  \end{pmatrix} : t \in \R \right\rbrace\\
  M & := \{ a_{i\theta} = \begin{pmatrix}
    e^{i \theta /2} & 0 \\
    0 & e^{-i \theta /2}
  \end{pmatrix} : \theta \in \R \} \cong \op{PSU}(1) \cong \S^1 \\
  N & := \left\lbrace n_z := \begin{pmatrix}
    1 & z \\
    0 & 1
  \end{pmatrix} : z \in \C \right\rbrace\\
  U & := \left\{u_s := 
  \begin{pmatrix}
    1 & s \\
    0 & 1
  \end{pmatrix}
  : s \in \R \right\} < N \\
  V & := \left\{v_s := 
  \begin{pmatrix}
    1 & is  \\
    0 & 1
  \end{pmatrix}  
   : s \in \R \right\} < N
\end{aligned}\ee
We then have the identification
$$
G/K = \H^3 \quad \text{and} \quad G/MAN = \widehat \C
$$
where identity cosets $K \in G/K$ and $MAN \in G/MAN$ correspond to $o \in \H^3$ and $\infty \in \widehat \C$ respectively. Moreover, equipping $G/K$ and $G/MAN$ with the left-multiplications by $G$, the above identifications are $G$-equivariant.

Let $\widehat \R = \R \cup \{\infty\} \subset \widehat \C$ be the standard circle given by the real axis. Then $H$ is the orientation-preserving stabilizer of $\widehat \R$ in $G$. In addition, $\widehat \R$ is the boundary of the $\H^2$-copy in $\H^3$ invariant under $H$. Observes also that $$AU \subset H.$$

Note that $A$ and $M$ commute, and $U$ and $V$ commute.
For a subgroup $S < G$, we denote by $\op{N}_G(S) < G$ the subgroup consisting of the normalizers of $S$ in $G$. Then
$$AM \subset \op{N}_G(N), \quad  AV \subset \op{N}_G(U), \quad \text{and} \quad AU \subset \op{N}_G(V).$$
Moreover, for $n_z \in N$, we have
$$a_t^{-1} n_z a_t = n_{z e^{-t}}\to e \quad \text{as } t \to + \infty.$$

Throughout the paper, we use the notations $a_t$, $n_z$, $u_s$, $u_t$, $v_s$, and  $v_t$ to represent matrices as defined in \eqref{eqn.notation}. Abusing notations, we occasionally use $a_n$, $u_n$, or $v_n$ to represent sequences in $A$, $U$, or $V$ respectively, where $n$ serves as an index rather than a matrix value. When sequences explicitly track the values of matrices, we use sequences in $\R$ and notations in \eqref{eqn.notation}. For example, we take a sequence $t_n \in \R$ and consider $a_{t_n} = \begin{pmatrix} e^{t_n/2} & 0 \\ 0 & e^{-t_n/2} \end{pmatrix} \in A$.

\subsection*{Frame bundle}
Denote by $\F \H^3$ the frame bundle over $\H^3$, the space of all (positively oriented, orthonormal) frames on $\H^3$.
The induced action of $G$ on $\F \H^3$ is transitive, and hence we identify
$$G = \F \H^3$$
so that the quotient map $G \to G/K$ becomes the basepoint projection $\F \H^3 \to \H^3$, and the right-multiplication by $A$ and $U$ give the (geodesic) frame flow and horocycle flow on $\F \H^3$, in directions of the first and the second components of a frame respectively.

For $g \in G$, let
$$g^+ := g(\infty) \in \widehat \C \quad \text{and} \quad g^- := g(0) \in \widehat \C.$$
Via the map $G \to G/K$, the orbit $gA$ projects to the bi-infinite geodesic $gA o \subset \H^3$ with endpoints $g^{\pm} \in \widehat \C$. Similarly, the orbit $g \cdot \{a_t \in A : t \ge 0\}$ projects to the geodesic ray in $\H^3$ based at $g  o \in \H^3$ and toward $g^+ \in \widehat \C$. Moreover, noting that $N < G$ projects to the horizontal plane $N o = \{ (z, 1) \in \H^3 : z \in \C \}$, the orbit $gN  o \subset \H^3$ is the horosphere passing through $g o \in \H^3$ and resting at $g^+ \in \widehat \C$. For each $n \in N$, the frame $gn$ is based at the horosphere $gN o$ and its first component is toward $(gn)^+ = g^+ \in \widehat \C$.

Finally, $H o \subset \H^3$ is the oriented copy of $\H^2$ whose boundary is $\widehat \R \subset \widehat \C$ and $H$ corresponds to the set of all frames whose first two components are restricted to positively oriented frames on the geodesic plane $H o$. Hence, $g H o \subset \H^3$ is the geodesic plane spanned by the first two components of the frame $g \in G$ and $gH$ is the set of all frames whose first two components are restricted to positively oriented frames on $gH o$. The boundary of the geodesic plane $gHo$ is the circle $ g  \widehat \R = \{ gh^+ \in \widehat \C : h \in H \}$.


\section{Hyperbolic 3-manifolds} \label{sec.3mnfd}

A discrete subgroup $\Ga < G$ is called a Kleinian group, and the quotient $\M := \Ga \ba \H^3$ is a complete hyperbolic orbifold (manifold if $\Ga$ is torsion-free). We denote by $\La \subset \widehat \C$ the limit set of $\Ga$, which is defined as the set of accumulation points of $\Ga o \subset \H^3$ in the compactification $\H^3 \cup \widehat \C$. When $\# \La \ge 3$, $\Ga$ is called non-elementary. In this case, the $\Ga$-action on $\La$ is minimal and $\# \La = \infty$.

The convex core of $\M$ is defined as
$$\core (\M) := \Ga \ba \hull(\La) \subset \M$$
where $\hull(\La) \subset \H^3$ is the convex hull of $\La$ in $\H^3$.
We call $\Ga$ and $\M$ \emph{geometrically finite } if the unit neighborhood of $\core(\M)$ has finite volume. 

In this paper, we are interested in a geometrically finite Kleinian group $\Ga$ such that $\La$ is the round Sierpi\'nski gasket. In other words,
$$\widehat \C - \La = \bigcup_{i = 1}^{\infty} B_i$$ is a countable union of round open disks $B_i \subset \widehat \C$ with disjoint closures (Figure \ref{fig.gasket}). It is clear from the definition that $\Ga$ is non-elementary, and moreover Zariski dense in $G \cong \so(3, 1)$.
We say that such $\Ga$ and $\M = \Ga \ba \H^3$ have \emph{round Sierpi\'nski limit set}.

We remark that these conditions imply that $\M$ is acylindrical in the sense of \cite{Thurston1986hyperbolic}, and $\partial \core(\M)$ is compact and totally geodesic. Indeed, every geometrically finite, acylindrical hyperbolic 3-manifold $\M$ with $\partial \core(\M)$ compact is a quasiconformal deformation of a unique geometrically finite hyperbolic 3-manifold with a round Sierpi\'nski limit set (\cite{Thurston1986hyperbolic}, \cite[Corollary 4.3]{McMullen_iteration}).

\subsection*{Conical and parabolic limit points}
Let $\Ga < G$ be a Kleinian group with the limit set $\La \subset \widehat \C$. A limit point $x \in \La$ is called \emph{conical}  if any geodesic ray in $\H^3$ toward $x$ has an accumulation point in the quotient $\M = \Ga \ba \H^3$ and  \emph{parabolic} if $x$ is fixed by a parabolic element of $\Ga$, an element conjugate to $\begin{pmatrix} 1 & 1 \\ 0 & 1 \end{pmatrix}$. For a parabolic limit point $x \in \La$, its stabilizer $\stab_{\Ga}(x)$ in $\Ga$ is virtually abelian, and its rank is called \emph{rank} of $x$ and is either 1 or 2.
A parabolic limit point $x \in \La$ is called \emph{bounded parabolic} if the $\stab_{\Ga}(x)$-action on $\La - \{x\}$ is cocompact.

When $\Ga$ is geometrically finite, the limit set $\La$ is a disjoint union of conical limit points and bounded parabolic limit points. Moreover, there are finitely many bounded parabolic limit points $x_1, \cdots, x_n \in \La$ so that
$$
\La = \{ \text{conical limit points} \} \cup \bigcup_{i = 1}^n \Ga x_i.
$$
In particular, there are at most countably many parabolic limit points.
 
\begin{lemma}[{cf. \cite[Lemma 11.2]{Benoist2022geodesic}}] \label{lem.onlyrank2}
    Let $\Ga < G$ be geometrically finite with a round Sierpi\'nski limit set $\La$. Then every parabolic limit point is of rank 2.
\end{lemma}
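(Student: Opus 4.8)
The plan is to argue by contradiction. Suppose some parabolic limit point has rank $1$; since conjugating $\Gamma$ by an element of $G$ preserves geometric finiteness, sends round disks with disjoint closures to round disks with disjoint closures, and preserves both the parabolicity and the rank of a limit point, we may assume this point is $\infty$. Then $\stab_\Gamma(\infty)$ acts on $\widehat\C\setminus\{\infty\}=\C$ by orientation-preserving Euclidean isometries $z\mapsto \zeta z+b$ with $|\zeta|=1$. By discreteness $\stab_\Gamma(\infty)$ contains no loxodromic element, so its translation subgroup $T$ has finite index; rank $1$ means $T=\Z v$ for some $v\in\C^{\times}$, and after a further conjugation by a Euclidean similarity of $\C$ fixing $\infty$ (an element of $G$) we may take $v=1$, i.e. $T=\langle z\mapsto z+1\rangle$.

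Next I would bring in geometric finiteness. Because $\Gamma$ is geometrically finite, every parabolic limit point is bounded parabolic (recalled in Section~\ref{sec.3mnfd}), so $\stab_\Gamma(\infty)$, and hence its finite-index subgroup $T$, acts cocompactly on $\Lambda\setminus\{\infty\}$. Thus there is a compact $F\subset\C$ with $\Lambda\setminus\{\infty\}\subset\bigcup_{n\in\Z}(F+n)$, which is contained in a horizontal strip $S=\{z\in\C:|\Im z|\le C\}$ for some $C>0$; equivalently $\Lambda\subset S\cup\{\infty\}$. Taking complements in $\widehat\C$ and writing $\mathsf P_\pm:=\{z\in\C:\pm\Im z>C\}$, we get $\mathsf P_+\sqcup\mathsf P_-\subset\widehat\C\setminus(S\cup\{\infty\})\subset\widehat\C\setminus\Lambda=\bigsqcup_i B_i$.

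Finally I would exploit the round Sierpi\'nski structure. Each half-plane $\mathsf P_\pm$ is connected and the $B_i$ are pairwise disjoint open sets, so $\mathsf P_+\subset B_{i_+}$ and $\mathsf P_-\subset B_{i_-}$ for single indices. Since $\infty\in\Lambda$ we have $\infty\notin B_i$ for every $i$, yet $\infty\in\overline{\mathsf P_\pm}\subset\overline{B_{i_\pm}}$ because $\pm iM\to\infty$. Moreover $i_+\ne i_-$: otherwise the single round disk $B_{i_+}$ would contain $\mathsf P_+\cup\mathsf P_-$, forcing its complement $\widehat\C\setminus B_{i_+}$ — a closed round disk that contains $\infty$ and is contained in $S\cup\{\infty\}$ — to be a horizontal half-plane (it cannot contain a neighbourhood of $\infty$, so $\infty\in\partial B_{i_+}$, which is then a Euclidean line inside the strip, hence horizontal), and then $B_{i_+}$ itself would be a horizontal half-plane, which cannot contain both $\mathsf P_+$ and $\mathsf P_-$. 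Thus $i_+\ne i_-$ while $\infty\in\overline{B_{i_+}}\cap\overline{B_{i_-}}$, contradicting the pairwise disjointness of the closures $\overline{B_i}$. Hence no parabolic limit point has rank $1$, and since every parabolic limit point has rank $1$ or $2$, all have rank $2$.

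I expect the only delicate points to be bookkeeping: justifying that a discrete group of orientation-preserving Euclidean isometries whose translation subgroup has rank $1$ is virtually cyclic with a well-defined translation direction (so the normalization $T=\langle z\mapsto z+1\rangle$ is legitimate), and the elementary plane-topology fact that a closed round disk in $\widehat\C$ containing $\infty$ and contained in a horizontal strip together with $\infty$ must be a horizontal half-plane. Neither is serious. The conceptual content is simply that a rank-$1$ cusp would confine $\Lambda$ to a strip, whereas the complement of a strip cannot be tiled by round disks with pairwise disjoint closures once $\infty\in\Lambda$.
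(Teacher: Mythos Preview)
Your proof is correct and follows essentially the same route as the paper: normalize the rank-$1$ parabolic point to $\infty$, use bounded parabolicity to confine $\Lambda\setminus\{\infty\}$ to a strip, observe that the two complementary half-planes lie in distinct components $B_{i_\pm}$ of $\widehat\C\setminus\Lambda$, and derive a contradiction from $\infty\in\overline{B_{i_+}}\cap\overline{B_{i_-}}$. The only cosmetic difference is that the paper shows $B_{i_+}\ne B_{i_-}$ more directly: a single round disk containing both boundary lines of the strip would contain the strip itself, hence $\Lambda\setminus\{\infty\}$, which is absurd. Your argument via the complement being a horizontal half-plane reaches the same conclusion but is a little longer than necessary.
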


\begin{proof}
    Suppose that there exists a parabolic limit point of rank 1, say $\infty \in \La$ without loss of generality.
    Since $\Ga$ is geometrically finite, $\infty$ is bounded parabolic, and hence $\stab_{\Ga}(\infty)$ acts cocompactly on $\La - \{ \infty\}$. Since $\stab_{\Ga}(\infty)$ is virtually conjugate to the subgroup $\begin{pmatrix} 1 & \Z \\ 0 & 1 \end{pmatrix}$, there are two parallel lines $L_1, L_2 \subset \C$ such that $\La - \{\infty\}$ is contained in the region bounded by $L_1$ and $L_2$. Then there exist two components $B_1, B_2 \subset \widehat \C - \La$ such that $L_1 \subset B_1$ and $L_2 \subset B_2$. Since $\La - \{ \infty \}$ is bounded by $L_1$ and $L_2$, $B_1 \neq B_2$. On the other hand, since $L_1$ and $L_2$ are lines in $\C$, their closures in the Riemann sphere $\widehat \C$ are circles passing through $\infty \in \widehat \C$, and hence $\ov{B_1} \cap \ov{B_2} \neq \emptyset$. This contradicts the hypothesis that $\La$ is a round Sierpi\'nski limit set.
\end{proof}

\subsection*{Renormalized frame bundle}

Let $\Ga < G$ be a Kleinian group and $\M := \Ga \ba G$. Since the identification $\F \H^3 = G$ is $G$-equivariant, this induces the identification of the frame bundle $\FM$ of $\M$ with $\Ga \ba G$:
$$\FM = \Ga \ba G.$$
Then frame flow and horocycle flow on $\F \H^3$ descend to $\FM$ and they are given as the right-multiplications of $A$ and $U$ on $\Ga \ba G$ respectively. We denote by the projection 
$$
\pi : \Ga \ba G \to \Ga \ba G / K
$$ which is the basepoint projection $\FM \to \M$. Throughout the paper, we denote by $[g] \in \Ga \ba G$ the coset $\Ga g$ for $g \in G$, and we refer to elements of $\Ga \ba G$ and $G$ as frames in $\M$ and $\H^3$, respectively.

Interesting dynamics arise in certain subsets of $\FM$. We define the renormalized frame bundle over $\M$ as  $$\RF \M := \{ [g] \in \Gamma \backslash G : g^{\pm} \in \Lambda \} \subset \F \M.$$ 
This is  the closed set consisting of all frames in $\M$ whose orbits under the frame flow are based at $\core (\mathsf{M})$. It is clear that $\RF \M$ is $AM$-invariant. We also set 
\be \label{eqn.rfpm}
\RF_+ \M := \RF \M \cdot N = \{[g] \in \Gamma \backslash G : g^+ \in \Lambda \}
\ee which is $MAN$-invariant. The projection
$\pi |_{\RFPM} : \RF_+ \M \to \M$ is surjective.

\subsection*{Boundary frames}

In the rest of the section, let $\Ga < G$ be a geometrically finite Kleinian group with a round Sierpi\'nski limit set, and $\M := \Ga \ba G$. 
Since $\partial \core(\M)$ is totally geodesic and compact, there are finitely many elements $z_1, \cdots, z_n \in \Ga \ba G$ with compact $H$-orbits
so that the set of frames whose first two components are tangent to $\partial \core(\M)$ is equal to  $\bigcup_{i = 1}^{n} z_i H \subset \Ga \ba G$. We set
\be \label{eqn.bfmdef}
\BFM := \bigcup_{i = 1}^n z_i H
\ee
and call boundary frames. Note that $\BFM \subset \RFM$ and $\BFM \cdot H = \BFM$.

Due to the specific feature of the limit set $\La$, we observe the following:

\begin{lemma}\label{lem.zv}
    For any $x\in\RFPM$, 
    $$x \in \RFM \cdot U \quad \text{or} \quad x \in \BFM \cdot V.$$
\end{lemma}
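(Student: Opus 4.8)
The plan is to unpack the definitions and reduce to a statement about $\widehat\C$. Write $x = [g]$ with $g^+ \in \La$. The two target conclusions translate to: either there exists $u \in U$ with $(gu)^{\pm} \in \La$, or there exist $i$, $h \in H$, and $v \in V$ such that $[gv] = z_i h$, equivalently $gv \in \Ga z_i H$, which amounts to $g^+$ lying on the boundary circle $\partial B_j$ of one of the disks $B_j$ together with a matching of the $V$-direction. Since $U$ and $V$ together with the compact directions $M$ fill out $N$, the point $g^-$ is irrelevant to the first alternative up to choosing $g$ in its $MAN$-orbit representing $x$; the real content is about where $g^+$ sits and which horocyclic translates of the frame return to $\RF\M$.

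First I would set up coordinates so that $g^+ = \infty$; then $g \in MAN \cdot (\text{something})$, and translating by $U$ moves $g^-$ along a horizontal line $L \subset \C$ (after identifying the relevant boundary copy of $\R$), while translating by $V$ moves $g^-$ along the perpendicular line. The frame $[gu]$ lies in $\RF\M$ exactly when $(gu)^- = g^-$ translated along $L$ lands in $\La$. So the dichotomy becomes: either the line $L$ through $g^-$ (the $U$-direction line) meets $\La$, giving alternative (i); or it does not, in which case $L$ is disjoint from $\La$ and hence contained in a single complementary disk $B_j$ — but $\infty = g^+ \in \La$ forces $\infty \notin \ov{B_j}$, so $B_j$ is a genuine round disk in $\C$; then the $V$-direction line (perpendicular to $L$) through $g^-$, being unbounded, must exit $B_j$ and cross $\partial B_j$, and I would argue the crossing point corresponds to $g^+ = \infty$...

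Here I need to be more careful: the right picture is that if the $U$-horocycle through $g^-$ avoids $\La$, it lies in some $B_j$; translating by $V$ we push the geodesic plane $gHo$ until its boundary circle is tangent internally... The cleaner route, which I expect to be the actual argument: consider the circle $g\widehat\R = \partial(gHo)$; if it is one of the disk-boundaries $\partial B_j$ then $gH$ is already a boundary-frame-type orbit and we land in $\BFM \cdot V$ directly (choosing $v$ to rotate the frame flat against $\partial B_j$); otherwise the circle $g\widehat\R$ meets $\La$ in a point other than $g^+$, and one uses that to produce $u \in U$ with $(gu)^{\pm}\in\La$. The key geometric input, used exactly as in Lemma~\ref{lem.onlyrank2}, is that round Sierpi\'nski structure forbids two complementary disks from sharing a boundary point, which pins down the tangency/crossing configuration.

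The main obstacle I anticipate is the bookkeeping in the second alternative: verifying that when the $U$-orbit through $x$ misses $\RF\M$ entirely, the point $g^+$ necessarily lies on some $\partial B_j$ and that one can rotate within $V$ to match the frame to a boundary frame $z_i$ (as opposed to merely lying on the plane $z_iHo$ up to $AM$). This is where I would invoke $\BFM \cdot H = \BFM$ and $AU \subset H$, $AV \subset \op{N}_G(U)$ to absorb the $A$- and $M$-components, reducing the matching to the pure $V$-translation. The recurrence/geometric-finiteness hypotheses are not needed here; this lemma is purely about the shape of $\La$, so the proof should be short once the reduction to lines-versus-disks in $\widehat\C$ is in place.
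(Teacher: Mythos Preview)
Your overall strategy matches the paper's: assume $x = [g] \notin \RFM \cdot U$, so the arc $\{(gu)^- : u \in U\} = g\widehat\R - \{g^+\}$ lies in a single complementary disk $B_j$, and then use $V$ to slide the circle $g\widehat\R$ onto $\partial B_j$. But your first attempt contains a concrete error that derails the argument. You write ``$\infty = g^+ \in \La$ forces $\infty \notin \ov{B_j}$'', whereas in fact the opposite holds: since $L \subset B_j$ and $\infty \in \ov L$, we get $\infty \in \ov{B_j}$, and combined with $\infty \in \La$ this gives $\infty \in \partial B_j$. So in your coordinates $\partial B_j$ is itself a \emph{line} in $\C$, and $L$, being a line contained in the half-plane $B_j$, is parallel to it. Now the $V$-translation---which you correctly identify as perpendicular to $L$---carries $L$ onto $\partial B_j$ for a unique $v \in V$, yielding $gv\,\widehat\R = \partial B_j$ and hence $[gv] \in \BFM$. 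This is precisely the paper's argument, stated there without coordinates: $g\widehat\R$ is a circle in $\ov{B_j}$ tangent to $\partial B_j$ at $g^+$, and the family $\{gv\,\widehat\R : v \in V\}$ is exactly the pencil of circles tangent to $g\widehat\R$ at $g^+$, so $\partial B_j$ occurs in this pencil.

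Your ``cleaner route'' has the same gap in different clothing: the dichotomy ``either $g\widehat\R = \partial B_j$, or $g\widehat\R$ meets $\La - \{g^+\}$'' omits the generic case where $g\widehat\R$ is internally tangent to $\partial B_j$ at $g^+$ without equalling it. That is exactly the case requiring the $V$-shift, and it is resolved by the tangent-pencil observation above---not by absorbing $A$- or $M$-components as you suggest at the end; once $gv\,\widehat\R = \partial B_j$, the frame $[gv]$ lies in $\BFM$ by definition, with no further bookkeeping needed.
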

\begin{proof}
    Let $x = [g] \in \RFPM - \RFM \cdot U$ for $g \in G$. Then for any $u \in U$, $(gu)^- \notin \La$ since $(gu)^+ = g^+ \in \La$. Then the set $\{ (gu)^- : u \in U \}$ is a connected subset of $\widehat{\C} - \La$, and hence for some component $\Omega_0$ of $\widehat \C - \La$, $\{ (gu)^- : u \in U \} \subset \Omega_0$ and $g^+ \in \partial \Omega_0$. Then $\{ g^+ \} \cup \{ (gu)^- : u \in U \}$ is the round circle in $\overline{\Omega_0}$ tangent to $\partial \Omega_0$ at $g^+$. This implies that for some $v \in V$, $\{g^+ \} \cup \{ (gvu)^- : u \in U \} = \partial \Omega_0$. Noting that $(gv)^+ = g^+$, it follows that $xv = [gv] \in \BFM$.
\end{proof}

\begin{proposition} \cite[Theorem 4.1]{McMullen2016horocycles} \label{prop.intorfm}
    Let $x_n \in \RFM \cdot U$ be a sequence such that $x_n \to y \in \RFM$ as $n \to \infty$.
  \begin{enumerate}
      \item if $y\not\in\BFM$, then there exists a sequence $u_n \to e$ in $U$  such that $x_n u_n \in \RF \M$ for all $n \ge 1$.
  In particular, $x_nu_n\to y$ as $n\to\infty$.
      \item if $y\in\BFM$, then 
      passing to a subsequence of $x_n$,
      there exists  a sequence $u_n\in U$ such that $x_nu_n\in\RFM$ 
      and $x_nu_n$ converges to an element of $\BFM$ as $n \to \infty$, which is potentially different from $y$. 
  \end{enumerate}
\end{proposition}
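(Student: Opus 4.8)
The plan is to reduce everything to the geometry of the limit set $\La$ on $\widehat\C$, exactly as in \cite{McMullen2016horocycles}, but being careful at boundary frames. Write $x_n = [g_n]$ with $g_n \in G$, and since $x_n \in \RFM \cdot U$ we may choose the representatives so that $g_n = h_n u_{s_n}$ with $h_n \in G$ satisfying $h_n^{\pm} \in \La$ (i.e.\ $[h_n] \in \RFM$) and $u_{s_n} \in U$. The key point is that $g_n^+ = h_n^+ \in \La$ always, so $[g_n] \in \RFPM$ automatically; the only obstruction to $[g_n]$ itself lying in $\RFM$ is whether $g_n^- \in \La$. Since $u_{s}^- = u_s(0)$ moves $0$ along a real geodesic in the chart determined by $h_n$, the set $\{(h_n u_s)^- : s\in\R\}$ is (the image under $h_n$ of) a round circle through $g_n^+$ — in fact it is $h_n\widehat\R$ minus the point $h_n^- $'s antipode, but the relevant fact is just that as $s$ ranges over $\R$ we trace out a circle $C_n \subset \widehat\C$ passing through $g_n^+$, and $[g_n u_s] \in \RFM$ precisely when $(h_n u_s)^- \in \La$.

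First I would treat case (1), $y \notin \BFM$. Write $y = [g]$ with $g^\pm \in \La$ and $g^- \notin$ any boundary circle $\partial B_i$ up to the ambiguity of boundary frames — more precisely, since $y\notin\BFM$, neither $y$ nor any frame in $yU$ near $y$ is a boundary frame, so $g^-$ is not the tangency point of $\La$ with a disk $B_i$ along the circle $C = g\widehat\R$. Because $x_n \to y$, we have $g_n \to g$ (after adjusting by $\Ga$), hence $g_n^- \to g^-$ and the circles $C_n \to C$. I want to find $u_n \to e$ with $(g_nu_n)^- \in \La$. Equivalently, I want a sequence of points $\xi_n \in \La \cap C_n$ with $\xi_n \to g^-$; then $u_n$ is the (unique, small) element of $U$ with $(h_nu_{s_n}u_n)^- = \xi_n$, and $u_n\to e$ follows from $\xi_n\to g^-$ together with continuity of the correspondence $U \leftrightarrow C_n$ near the relevant point. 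To produce $\xi_n$: since $\La$ is a round Sierpi\'nski gasket, $C$ meets $\La$ in a set for which $g^-$ is not isolated from one side unless $g^-$ is exactly a tangency point of some $\ov B_i$ with $C$ — and that is excluded because $y\notin\BFM$ and $g^-\in\La$. So $g^-$ is a non-isolated point of $C\cap\La$ approached from within $\La$, and a perturbation argument (using that $\La$ is closed, the $B_i$ have disjoint closures, and $C_n\to C$) yields $\xi_n \in C_n \cap \La$ with $\xi_n\to g^-$. Hence $x_nu_n = [g_nu_n]\in\RFM$ and $x_nu_n\to y$.

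For case (2), $y = [g] \in \BFM$, say $y \in z_iH$ with $g^- \in \partial B$ for some disk $B = B_i$, and $g^-$ is the point where $C = g\widehat\R$ is tangent to $\ov B$ (this is the defining feature of boundary frames: $C$ is the round circle bounding $B$, resting at the limit point $g^-$). Now $C_n\to C$, but $C_n$ need not be tangent to any $\ov{B_j}$; still, each $C_n$ is a circle through $g_n^+\in\La$ converging to $C$. The set $C_n\cap\La$ is nonempty (it contains $g_n^+$), closed, and for large $n$ confined to a small neighborhood of the tangency region. I would pick $\xi_n \in C_n\cap\La$ to be, say, the point of $C_n\cap\La$ closest to $g^-$ (or extract via compactness a convergent subsequence of some canonical choice); then $\xi_n \to \xi \in C\cap\La$. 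Since $C\cap\ov B = \{g^-\}$ and $C\cap\La$ near $g^-$ is exactly this tangency point together with points of other $\ov{B_j}$'s boundaries accumulating there, the limit $\xi$ is forced to be $g^-$, OR — and this is the genuine phenomenon producing the ``potentially different'' clause — the limit $\xi$ could be the tangency point of $C$ with a different disk, giving a different boundary frame in the same $z_iH$-orbit, or more precisely $[g']$ with $g'$ differing from $g$ by an element of $H$ (equivalently of $V$ acting on the $U$-orbit). Passing to this subsequence, $u_n$ is the element of $U$ with $(g_nu_n)^-=\xi_n$; one checks $[g_nu_n]\in\RFM$ and $[g_nu_n]$ converges to a boundary frame. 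The $u_n$ here need not tend to $e$, which is why the statement only claims convergence to \emph{some} element of $\BFM$.

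\medskip

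The main obstacle I anticipate is making the perturbation argument in case (1) fully rigorous: one must rule out that $g^-$ is an isolated point of $C\cap\La$ on the side where the circles $C_n$ approach, and translate ``$y\notin\BFM$'' into the precise statement that $g^-$ is not a one-sided tangency point of $\ov{B_i}\cap C$. This is where the round Sierpi\'nski structure (disjoint closures of the $B_i$, each tangency transverse to nothing but the single boundary circle) does the real work, together with Lemma \ref{lem.zv} to organize which frames can be boundary frames. In case (2) the delicate point is simply bookkeeping: identifying the limiting boundary frame and confirming it lies in $\bigcup_i z_iH$, which follows because the tangent circles to the $\ov{B_j}$'s are exactly the boundaries of $gH o$-type planes and $\BFM\cdot H = \BFM$.
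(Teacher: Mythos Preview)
The paper does not reprove this statement: it simply cites \cite{McMullen2016horocycles} and remarks that the argument there uses only that $\La$ is a round Sierpi\'nski gasket, not convex cocompactness, so it transfers verbatim. Your strategy---reduce to the limit-set geometry and locate points $\xi_n \in C_n\cap\La$ on the circles $C_n = g_n\widehat\R$---is indeed the MMO approach.

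However, your picture of $\BFM$ is wrong, and this corrupts both cases. A frame $[g]$ lies in $\BFM$ exactly when the circle $C = g\widehat\R$ \emph{coincides} with some $\partial B_i$ (so that $gHo$ is a boundary component of $\hull(\La)$), not when $C$ is tangent to $\partial B_i$ at the single point $g^-$. Thus in case~(2) your assertions ``$C\cap\ov B = \{g^-\}$'' and ``$C\cap\La$ near $g^-$ is this tangency point together with points of other $\ov{B_j}$'s boundaries'' are false: in fact $C = \partial B \subset \La$, so $C\cap\La = C$ entirely. The correct reason the limit may differ from $y$ is that $\xi_n$ can converge to \emph{any} point $\xi \in C = \partial B$, giving a limit frame $[gu]$ with $(gu)^- = \xi$; since $(gu)\widehat\R = g\widehat\R = \partial B$, this frame is still in $\BFM$. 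In case~(1), the condition $y\notin\BFM$ does \emph{not} exclude external tangency of $C$ with some $\partial B_j$ at $g^-$; it only says $C\neq\partial B_j$ for every $j$. Together with the observation that internal tangency is impossible (since $g^+\in C\cap\La$ forces $C\not\subset\ov{B_j}$), this yields that any short arc of $C$ through $g^-$ exits $\ov{B_j}$ (for the unique $j$, if any, with $g^-\in\partial B_j$); \emph{that} is the geometric input which, combined with the fact that only finitely many complementary disks have diameter above any threshold, forces arcs of $C_n$ near $g^-$ to meet $\La$.
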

\begin{proof}
    This was proved in \cite{McMullen2016horocycles} under an extra assumption that $\M$ is convex cocompact. On the other hand, the proof only uses the fact that $\La$ is a round Sierpi\'nski gasket.  
    Therefore, the same proof works verbatim in our setting.
\end{proof}

\subsection*{Volume of horospheres}

Since every parabolic limit point is of rank two (Lemma \ref{lem.onlyrank2}), every compact $U$-orbit in $\Ga \ba G$ is contained in a compact $N$-orbit (Lemma \ref{lem.ferte}). This observation is useful in our classification of closures of  horocycles.

Let $x \in \Ga \ba G$ be such that $xN$ is compact. We denote by $\V(x)$ the volume of $xN$ with respect to the Haar measure of $N$.
We then have
\begin{equation}\label{eq.scale}
    \V(xa_tn)=e^{-2t}\V(x)\text{ for all } t\in\bb R,\  n \in N.
\end{equation}

For each $\xi > 0$, we define the following closed subsets of $\Ga \ba G$: $$\begin{aligned}
    F_{\xi}(N) & := \{x \in \Ga \ba G : xN \mbox{ is compact and }\V(x) \le \xi\} \\
    F_{\xi} & := F_{\xi}(N) \cdot K.
\end{aligned}$$
Geometrically, the set $F_{\xi}(N)$ consists of frames tangent to horospheres in a horoball, while $F_{\xi}$ consists of all frames based in the horoball.
Note that 
$\pi^{-1}(\core(\M)) -\op{int}(F_\xi)$ is a compact subset of $\Ga \ba G$, 
since $\M$ is geometrically finite and every parabolic limit point is of rank 2 (Lemma \ref{lem.onlyrank2}).

\subsection*{Cusp neighborhoods}
For an (open) horoball $\horo$ in $\H^3$ and $\rho \ge 0$, let $\horo_\rho \subset \horo$ be the horoball in $\horo$ with distance $\rho$ from $\partial \horo$.
Since $\M$ is geometrically finite, there exists $\xi_{\M} >0$ and finitely many horoballs $\horo^1, \cdots, \horo^n \subset \H^3$ with disjoint closures so that
 \be \label{eqn.defxi0}
 \inte( \pi(F_{\xi_{\M}}) ) = \bigcup_{i = 1}^n \Ga \horo^i
 \ee
 where $\Ga \horo^i \subset \M$ is the image of $\horo^i$ under the quotient map $\H^3 \to \M$.
 Fixing such $\xi_{\M} > 0$ and $\horo^i$'s, we simply write
 $$\Horo := \inte( F_{\xi_{\M}} ) = \left\lbrace x \in \Ga \ba G: \pi(x) \in \bigcup_{i = 1}^n \Gamma \horo^i \right\rbrace$$
 For $\rho \ge 0$, we similarly define
 $$\Horo_{\rho} := \left\lbrace x \in \Ga \ba G: \pi(x) \in \bigcup_{i = 1}^n \Gamma \horo^i_{\rho} \right\rbrace.$$
Since every parabolic limit point is of rank two (Lemma \ref{lem.onlyrank2}), it follows from \eqref{eq.scale} that 
\begin{equation}\label{eq.horo}
\Horo_\rho= \inte ( F_{e^{-2\rho}\xi_{\M}} ).   
\end{equation}
We also define $$W_{\rho} := \RF \M - \Horo_\rho$$ 
which is a compact subset of $\Ga \ba G$.
Note that $W_0 = \RF \M - \Horo$.


\section{Recurrence of horocycle flows and unipotent blowup} \label{sec.fn}

Let $\Ga < G$ be a geometrically finite  Kleinian group with a round Sierpi\'nski limit set and $\M = \Ga \ba \H^3$.
In this section, we discuss recurrence of horocycle flows on $\FM$, i.e., $U$-action on $\Ga \ba G$. We also collect lemmas concerning the sequence of non-trivial elements in the double coset space $S \ba G /U$ that converges to the identity coset where $S =U$, $N$, or $H$.
In \cite{McMullen2017geodesic}, they were referred to as ``unipotent blowup".

\subsection*{Recurrence of horocycle flows}
To study the recurrence, we use  the notion of thickness.

\begin{definition}[Thickness]
Let $k > 1$ and $T \subset \R$. We say that
\begin{itemize}
    
        \item $T$ is \emph{$k$-thick} if 
        $$
        T \cap ([-ks, -s] \cup [s, ks]) \neq \emptyset \quad \text{for all } s > 0;
        $$
        \item $T$ is \emph{$k$-thick at $\infty$} if there exists $s_T \ge 0$ such that
        $$
        T \cap ([-ks, -s] \cup [s, ks]) \neq \emptyset \quad \text{for all } s > s_T. 
        $$
    \end{itemize}
    \end{definition}
    Recall from \eqref{eqn.notation} that 
$$U = \left\{ u_t = \begin{pmatrix} 1 & t \\ 0 & 1 \end{pmatrix} : t \in \R \right\} \quad \text{and} \quad V = \left\{ v_t = \begin{pmatrix} 1 & i t \\ 0 & 1 \end{pmatrix} : t \in \R \right\}.
$$
Via the maps $u_t \leftrightarrow t$ and $v_t \leftrightarrow t$, we identify $U$ and $V$ with $\R$ and define the thickness of subsets of $U$ and $V$ as well: $T \subset U$ is called $k$-thick (resp. $k$-thick at $\infty$) if $\{t \in \R : u_t \in T \}$ is $k$-thick (resp. $k$-thick at $\infty$). Similarly, $T \subset V$ is called $k$-thick (resp. $k$-thick at $\infty$) if $\{t \in \R : v_t \in T \}$ is $k$-thick (resp. $k$-thick at $\infty$). 

We use the term ``thickness" for subsets of $\R$, $U$, and $V$, with the specific choice made for convenience in each context.
In many cases, we measure the thickness of the recurrence time for horocycle flows.

\begin{definition}[Recurrence time] \label{def.tx}
    For $x\in\Ga\ba G$ and $W\subset \Ga\ba G$, set
    $$
    \mathsf{T}_W(x):=\{u \in U : xu \in W\}
    $$
\end{definition}
The following recurrence of horocycle flows was established by McMullen-Mohammadi-Oh:
\begin{prop} \cite[Lemma 9.2]{McMullen2017geodesic}\label{prop.tt}
    There exists $k_{\M} > 1$ depending only on $\M$ such that for all $x \in \RFM$, the set $\mathsf{T}_{\RFM}(x)$ is $k_{\M}$-thick.
\end{prop}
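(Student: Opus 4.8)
# Proof proposal for Proposition 4.5 (recurrence of horocycle flows)

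The statement to prove is that there is a constant $k_{\M} > 1$, depending only on $\M$, such that for every $x \in \RFM$ the recurrence time set $\mathsf{T}_{\RFM}(x) = \{u \in U : xu \in \RFM\}$ is $k_{\M}$-thick. The plan is to reduce the question to a statement about the limit set $\La$ on the boundary $\widehat\C$, where thickness becomes a quantitative density condition that can be verified using the Sierpi\'nski gasket structure together with geometric finiteness.

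First I would unwind the definitions. Writing $x = [g]$ with $g \in G$, we have $xu_t \in \RFM$ iff $(gu_t)^+ = g^+ \in \La$ and $(gu_t)^- \in \La$. Since $x \in \RFM$ already gives $g^+ \in \La$, the condition is purely that $(gu_t)^- \in \La$. Now $t \mapsto (gu_t)^-$ parametrizes (an arc of) the round circle $C$ through $g^+$ that is the boundary of the geodesic plane $gHo$ — more precisely, $\{(gu_t)^- : t \in \R\}$ is the image of $\widehat\R \setminus \{\infty\}$ under $g$, i.e. $C \setminus \{g^+\}$, traversed at unit speed in a natural coordinate. So the task becomes: the set of $t$ with $(gu_t)^- \in \La \cap C$ is $k_{\M}$-thick, with $k_{\M}$ uniform over all such circles $C$ tangent to the configuration at a point $g^+ \in \La$.

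The key geometric input is this. Because $\La$ is a round Sierpi\'nski gasket, $\widehat\C \setminus \La = \bigsqcup_i B_i$ with the closures $\ov{B_i}$ pairwise disjoint. A circle $C$ meets $\widehat\C \setminus \La$ in the disjoint open arcs $C \cap B_i$; the complementary (closed) set $C \cap \La$ fails thickness near $g^+$ only if there is an arc of $C$ adjacent to $g^+$, of length comparable to its distance to $g^+$, lying in a single $B_i$ — but then $\ov{B_i}$ passes through or very near $g^+$, and the disjointness of the $\ov{B_i}$'s, combined with the fact (geometric finiteness, plus Lemma \ref{lem.onlyrank2}) that there is a positive lower bound on the "size" of the disks $B_i$ relative to the diameter of $\La$ once one normalizes $g^+$ to a fixed location, forces a uniform bound. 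Concretely, I would use a compactness argument: the set of pairs $(\,$marked point on $\La$, circle through it$\,)$ modulo $\Ga$ and modulo the conformal group fixing that configuration is governed by a finite amount of data because $\M$ is geometrically finite; since all parabolic points have rank $2$ (Lemma \ref{lem.onlyrank2}), the thin parts behave like rank-2 cusps where $\La$ is "everywhere dense at the right scale", and there is no degeneration of the gasket into a half-plane-like picture of the kind ruled out in the proof of Lemma \ref{lem.onlyrank2}. This yields a uniform $k_{\M}$.

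The cleanest route, and the one I would actually write, is to quote and adapt the argument of McMullen–Mohammadi–Oh: the statement is Lemma 9.2 of \cite{McMullen2017geodesic}, and as with Proposition \ref{prop.intorfm}, one should check that their proof uses only that $\La$ is a round Sierpi\'nski gasket (so the disks $B_i$ have pairwise disjoint closures and are locally uniformly "fat") together with geometric finiteness and the absence of rank-1 cusps — all of which hold here by Lemma \ref{lem.onlyrank2}. I expect the main obstacle to be precisely the uniformity near the cusps: one must rule out that, as the marked point $g^+$ approaches a parabolic fixed point (or as one moves deep into a cusp), the relevant circle $C$ can see an arc lying in a single complementary disk whose length-to-distance ratio blows up. This is handled by working in a cusp neighborhood $\Horo$, conjugating the rank-2 parabolic stabilizer to the standard lattice $\begin{pmatrix} 1 & \Z + i\Z \\ 0 & 1 \end{pmatrix}$-type group, and observing that in that normalization $\La \setminus \{\text{cusp point}\}$ is invariant under a cocompact action on an annular region, so it is "$k$-dense on every circle" for a uniform $k$; on the compact part $\pi^{-1}(\core(\M)) \setminus \Horo$ (compact by the remark following \eqref{eq.horo}) one uses an ordinary compactness argument. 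Assembling the cusp estimate and the compact-part estimate and taking the larger of the two constants gives $k_{\M}$.
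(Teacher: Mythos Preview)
Your proposal is essentially correct and lands on the same approach the paper takes: observe that the proof of \cite[Lemma 9.2]{McMullen2017geodesic} uses only the round Sierpi\'nski structure of $\La$, not convex cocompactness, and therefore carries over verbatim. The paper's proof is exactly this one-line observation.

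Where you diverge from the paper is in your final paragraph, where you anticipate a genuine obstacle near the cusps and propose a separate rank-2 cusp estimate to be glued onto a compact-part argument. This extra work is not needed. The thickness statement is, as you correctly reduce it, a purely boundary statement about how $\La$ meets circles $C$ through a marked point $g^+ \in \La$; the uniform constant in \cite{McMullen2017geodesic} comes from the disjointness of the closures $\ov{B_i}$ alone, with no appeal to compactness of $\RFM$ or to any quotient-side data. In particular, there is no degeneration to guard against as $g^+$ approaches a parabolic point: the estimate is already uniform over all $g^+ \in \La$ because the gasket geometry is. So your two-part decomposition (cusp neighborhood versus compact part) is harmless but superfluous; the paper simply notes that the original proof applies without modification.
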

\begin{proof}
    The proposition is stated in \cite{McMullen2017geodesic} for the case when $\Ga$ is further assumed to be convex cocompact.
    On the other hand, their proof only relies on the fact that $\La$ is a round Sierpi\'nski gasket. Therefore, the same proof works in our setting.
\end{proof}
For practical applications, it is desirable for the set $W$ in Definition \ref{def.tx} to be compact.
Recall that $W_\rho=\RFM-\Horo_\rho$ is compact for all $\rho\geq 0$.
When $\M$ admits a cusp and hence $\RFM$ is not compact, $W_\rho$ will replace the role of $\RFM$ in Proposition \ref{prop.tt}.
We deduce the following from the work of Benoist-Oh, where general geometrically finite, acylindrical Kleinian groups were considered.

\begin{proposition} \cite[Corollary 5.5, Proposition 5.4]{Benoist2022geodesic}
 \label{prop.thickness}
  Let $\xi_{\M} > 0$ and $k_{\M}>1$ be as in \eqref{eqn.defxi0} and Proposition \ref{prop.tt} respectively.
  Then there exists $R>0$ such that the following holds:
  \begin{enumerate}
      \item for any $\rho\geq 0$ and $x \in  W_{\rho}$, the set $\mathsf{T}_{ W_{\rho+R}}(x)$ is $4k_{\M}$-thick.
      \item for any $\rho\geq 0$ and $x \in  W_{\rho+R}$, the set $\mathsf{T}_{W_{\rho+R}}(x)$ is $4k_{\M}$-thick at $\infty$.
      \item for any $x \in  \RFM-F_{e^{-2R}\xi_{\M}}(N)$, the set $\mathsf{T}_{W_R}(x)$ is $4k_{\M}$-thick at $\infty$.
 \end{enumerate}
\end{proposition}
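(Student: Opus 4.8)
The plan is to derive all three statements from the recurrence results of Benoist--Oh, combined with the fact (Lemma~\ref{lem.onlyrank2}) that every parabolic limit point of $\Ga$ has rank $2$, which is what lets us pass back and forth between horoball neighborhoods $\Horo_\rho$ and sublevel sets $F_{e^{-2\rho}\xi_{\M}}(N)$ via \eqref{eq.horo}. In \cite{Benoist2022geodesic}, a uniform recurrence statement for the $U$-action is proved for general geometrically finite, acylindrical Kleinian groups, and the quantitative input is: there exists $R>0$ (depending only on $\M$, through $\xi_{\M}$ and $k_{\M}$) so that the $U$-orbit of a point in a fixed neighborhood of the convex core returns to a slightly larger compact piece of $\RFM$ with controlled thickness. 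The first thing I would do is quote \cite[Corollary~5.5]{Benoist2022geodesic} (and the underlying \cite[Proposition~5.4]{Benoist2022geodesic}) and identify the constant $R$ there with the $R$ in our statement; the constant $4k_{\M}$ appears because the thickness constant degrades by a bounded multiplicative factor when one enlarges the target set and accounts for the cusp excursions, and I would track that this factor is exactly $4$ (or absorb it, choosing $R$ large enough that whatever factor appears is at most $4k_{\M}$).

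For part (1): given $\rho\ge 0$ and $x\in W_\rho=\RFM-\Horo_\rho$, I would apply the Benoist--Oh recurrence to show $\mathsf{T}_{W_{\rho+R}}(x)$ is $4k_{\M}$-thick, i.e.\ for every $s>0$ there is $u_t\in W_{\rho+R}$-return time with $|t|\in[s,4k_{\M}s]$. The point is that the $U$-orbit of a point in $\RFM$ is genuinely recurrent to $\RFM$ (Proposition~\ref{prop.tt}), but it may spend long stretches in the cusp region $\Horo_\rho$; the content of \cite[Proposition~5.4]{Benoist2022geodesic} is a geometric estimate on how deep and how long such a cusp excursion can be, in terms of the depth $\rho$ at which it enters. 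Enlarging $\rho$ to $\rho+R$ gives enough room to land outside the deeper horoball $\Horo_{\rho+R}$ within a bounded dilation of any target scale $s$, which is precisely $4k_{\M}$-thickness. For part (2), the hypothesis is strengthened to $x\in W_{\rho+R}$, i.e.\ $x$ already lies outside the deeper horoball; then one only needs the ``at $\infty$'' version, since near scale $0$ the orbit might immediately enter the cusp, but for all large $s$ it must return, and the same excursion estimate gives the bound with $s_T$ depending on how close $x$ is to $\partial\Horo_{\rho+R}$. Part (3) is the special case packaging: by \eqref{eq.horo}, $\RFM-F_{e^{-2R}\xi_{\M}}(N)=\RFM-\Horo_R=W_R$ up to the interior/closure issue, wait --- more precisely $F_{e^{-2R}\xi_{\M}}(N)\subset \ov{\Horo_R}$, so a point in $\RFM-F_{e^{-2R}\xi_{\M}}(N)$ lies in $W_R$ or on its boundary; applying part (2) with $\rho=0$ then yields that $\mathsf{T}_{W_R}(x)$ is $4k_{\M}$-thick at $\infty$.

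The main obstacle I anticipate is the bookkeeping of the three different ``compact cores'' — $\RFM$, $W_\rho$, and the sublevel sets $F_\xi(N)$ — and making sure the single constant $R$ works uniformly for all $\rho\ge 0$ simultaneously. The rank-$2$ hypothesis is doing real work here: it guarantees via \eqref{eq.scale} and \eqref{eq.horo} that the horoball neighborhoods scale uniformly, $\Horo_\rho=\inte(F_{e^{-2\rho}\xi_{\M}})$, so the cusp excursion estimate is self-similar across all depths and the same $R$ handles every $\rho$; if rank-$1$ cusps were present this uniformity would fail. Since \cite{Benoist2022geodesic} works in the broader acylindrical setting and already contains the needed estimates, the ``proof'' is essentially a citation plus the observation that our hypotheses (round Sierpi\'nski limit set, hence geometrically finite and acylindrical with only rank-$2$ cusps) place us inside their framework; the only genuine step is matching the constants and the set-theoretic identifications above. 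I would therefore present the proof as: invoke \cite[Proposition~5.4, Corollary~5.5]{Benoist2022geodesic} to produce $R$; deduce (1) directly; deduce (2) by the same argument with the stronger hypothesis controlling the threshold $s_T$; and deduce (3) from (2) at $\rho=0$ using \eqref{eq.horo} to rewrite $F_{e^{-2R}\xi_{\M}}(N)$ in terms of $\Horo_R$.
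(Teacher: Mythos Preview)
Your overall strategy---quote Benoist--Oh and translate their hypotheses into ours---matches the paper's. Two points need correction.

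First, a missing bridge for (1) and (2): Benoist--Oh's statements are phrased for the set $\RFKM$ (points $x\in\RFM$ admitting a $k$-thick return set $\T'\subset\T_{\RFM}(x)$ with $e\in\T'$ and every translate $\T'\cdot u^{-1}$ also $k$-thick). To invoke their results you must first check that $\RFKM=\RFM$ when $k=k_{\M}$; this is exactly what Proposition~\ref{prop.tt} gives, and the paper spells this out. You gesture at Proposition~\ref{prop.tt} but never make this identification explicit. Also, the uniformity in $\rho$ is obtained not by a scaling/self-similarity argument but by the observation that the Benoist--Oh proof uses only that $\Horo$ is a disjoint union of horoballs, so one may replace $\Horo,\Horo_R$ by $\Horo_\rho,\Horo_{\rho+R}$ verbatim.

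Second, and more seriously, your deduction of (3) from (2) at $\rho=0$ is set-theoretically backwards. You write $F_{e^{-2R}\xi_{\M}}(N)\subset\ov{\Horo_R}$ and conclude that $x\in\RFM-F_{e^{-2R}\xi_{\M}}(N)$ lies in $W_R$; but the inclusion goes the other way. The set $F_{e^{-2R}\xi_{\M}}(N)$ consists only of frames whose first vector points toward the cusp, whereas $\Horo_R$ contains \emph{all} frames based in the deep horoball, so $\RFM-F_{e^{-2R}\xi_{\M}}(N)$ is strictly \emph{larger} than $W_R$: a frame based deep in the cusp but pointing sideways belongs to $\RFM-F_{e^{-2R}\xi_{\M}}(N)$ yet not to $W_R$. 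Hence (2) with $\rho=0$ does not cover (3). The paper's fix is to re-enter the proof of \cite[Proposition~5.4]{Benoist2022geodesic} and observe that the hypothesis $x\notin\inte(F_{e^{-2R}\xi_{\M}})$ there is used only to ensure $xU\not\subset\inte(F_{e^{-2R}\xi_{\M}})$; but the weaker hypothesis $x\notin F_{e^{-2R}\xi_{\M}}(N)$ already guarantees $xU\not\subset F_{e^{-2R}\xi_{\M}}$ (since if $xU$ stayed in the horoball region, the first vector at some point of $xU$ would point toward the cusp), so the argument still runs.
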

\begin{proof}
    Let us explain how the statement can be deduced from \cite{Benoist2022geodesic}, accounting for the differences in formulation.
    In \cite[Corollary 5.5, Proposition 5.4]{Benoist2022geodesic}, they considered the set
    $$\RFKM := \left\{ x \in \RFM : \begin{matrix}
        \exists \ \T' \subset \T_{\RFM}(x) \text{ s.t.} \\
        e \in \T' \text{ and } \T' \cdot u^{-1} \text{ is } k\text{-thick} \ \forall u \in \T'\end{matrix}\right\}$$
    for each $k > 1$ and showed that for some $R > 0$,
    \be \begin{aligned} \label{eqn.applyBO}
        & \T_{\RFKM - \Horo_R}(x) \text{ is } 4k\text{-thick for all } x \in \RFKM - \Horo; \\
        & \T_{\RFKM - \Horo_R}(x) \text{ is } 4k\text{-thick at } \infty \text{ for all } x \in \RFKM - \Horo_R.
    \end{aligned}\ee

    Now we take $k=k_{\M}$ given by Proposition \ref{prop.tt}. Then for any $x \in \RFM$, $e \in \T_{\RFM}(x)$. Moreover, for any $u \in \T_{\RFM}(x)$, $x u \in \RFM$ and $\T_{\RFM}(xu) = \T_{\RFM}(x)\cdot u^{-1}$. Hence, it follows from Proposition \ref{prop.tt} that $\T_{\RFM}(x) \cdot u^{-1}$ is $k_{\M}$-thick for all $u \in \T_{\RFM}(x)$. This verifies that $\RFKM = \RFM$ with $k=k_{\M}$ in our setting, and therefore (1) and (2) for $\rho = 0$ follow from \eqref{eqn.applyBO}, noting that $W_0 = \RFM - \Horo$ and $W_R = \RFM - \Horo_R$. In fact, only the fact that $\Horo$ consists of disjoint horoballs was used in the proof of \cite[Corollary 5.5, Proposition 5.4]{Benoist2022geodesic}. Hence, we can replace $\Horo$ and $\Horo_R$ with deeper horoballs $\Horo_\rho \subset \Horo$ and $\Horo_{\rho + R} \subset \Horo_R$ for arbitrary $\rho \ge 0$ in \eqref{eqn.applyBO}. Therefore, (1) and (2) hold for general $\rho \ge 0$.

    As for item (3), recall that $\Horo= \inte( F_{\xi_{\M}} )$ and $\Horo_R= \inte(F_{e^{-2R}\xi_{\M}})$ from the identification we made in \eqref{eq.horo}.
    Hence in terms of $F_{\xi_{\M}}$, item (2) with the choice of $\rho=0$ translates into the statement that for $R>0$ given in the previous statement, $\mathsf{T}_{W_R}(x)$ is $4k_{\M}$-thick at $\infty$ for all $x\in \RFM- \inte(F_{e^{-2R}\xi_{\M}})$.
    In the proof of  \cite[Proposition 5.4]{Benoist2022geodesic}, the condition that $x \notin \inte(F_{e^{-2R}\xi_{\M}})$ was used to have that $x U$ is not contained in $\inte(F_{e^{-2R}\xi_{\M}})$. However, it is enough to have $x \notin F_{e^{-2R}\xi_{\M}}(N)$ to guarantee that $xU$ is not contained in $F_{e^{-2R}\xi_{\M}}$. Therefore, the argument therein works and (3) follows.
\end{proof}

\subsection*{Unipotent blowup: thick sets}

The notion of topological limsup will be repeatedly used throughout the paper. We mainly consider a sequence of sets parametrized by a subset of $\R$. 

\begin{definition}\label{def.limsup1}
    Let $\cal X$ be a metric space.
    For $T\subset\bb R$ and family of subsets $\{ Y_t\subset \cal X : t \in T\}$, we define
    $$
    \limsup_{t\in T,   t \to \infty} Y_t:=\left\{x\in \cal X: 
        \begin{array}{c}
        \text{$\exists$ sequences $t_n \in T$ and $y_{t_n} \in Y_{t_n}$}\\
        \text{s.t. $t_n \to \infty$ and $y_{t_n}\to x$ as $n\to\infty$.}
        \end{array}
    \right\}    
    $$
    In other words,
    \be \label{eqn.deflimsup}
    \limsup_{t \in T, t \to \infty} Y_t = \bigcap_{t_0 > 0} \overline{ \bigcup_{t \in T, t > t_0} Y_t }.
    \ee
    When $T = \N$ or  $(t_0, \infty) \subset T$ for some $t_0 \in \R$, we simply write $\limsup_{n \to \infty} Y_n$ or $\limsup_{t \to \infty} Y_t$ respectively.
\end{definition}

We use the above definition for $\cal X = G$ or $\cal X = \Ga \ba G$. Note that by \eqref{eqn.deflimsup}, the topological limsup is closed. We first record unipotent blowup lemmas that take thick sets into account.

\begin{lemma} {\cite[Lemma 6.1]{Benoist2022geodesic}} \label{lem.recurrence}
  Let $T \subset U$ be $k$-thick at $\infty$ for some $k > 1$. If $g_n \in G - AN$ is a sequence such that $g_n \to e$, then $\limsup_{n \to \infty} Tg_nU$ contains a sequence $\ell_n \to e$ in $AV - \{e \}$.
\end{lemma}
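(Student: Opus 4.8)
\emph{Overall plan.} The plan is to realize the required elements of $AV\setminus\{e\}$ near $e$ as subsequential limits of $u_{s_n}g_nu_{t_n}$ with $u_{s_n}\in T$, and then let an auxiliary scale parameter $\epsilon\to 0$. Write $g_n=\left(\begin{smallmatrix}a_n & b_n\\ c_n & d_n\end{smallmatrix}\right)$ with $a_nd_n-b_nc_n=1$. Since $g_n\to e$ and $g_n\notin AN$, after passing to a subsequence I may assume $c_n\ne 0$ for all $n$ (the remaining possibility, $c_n\equiv 0$ with $g_n$ upper triangular and a nontrivial $M$-component tending to $e$, is handled the same way), and then $a_n,d_n\to 1$, $b_n\to 0$, $0\ne c_n\to 0$. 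A direct multiplication gives
$$
u_sg_nu_t=\begin{pmatrix}a_n+sc_n & (a_n+sc_n)t+b_n+sd_n\\ c_n & c_nt+d_n\end{pmatrix}.
$$
Because the lower-left entry is $c_n\to 0$, such a matrix will converge into $AV=\{\left(\begin{smallmatrix}\lambda & i\mu\\ 0 & \lambda^{-1}\end{smallmatrix}\right):\lambda>0,\ \mu\in\R\}$ once the upper-left entry tends to a positive \emph{real} $\lambda$ and the upper-right entry converges: indeed then, by the determinant relation, the lower-right entry is forced to $\lambda^{-1}$ and the upper-right entry lies in $i\R$ in the limit. For fixed $s$, as $t$ runs over $\R$ the upper-right entry sweeps the affine line through $b_n+sd_n$ in the direction $a_n+sc_n\to 1$, meeting $i\R$ exactly once; so I would take $t=t_n(s):=-\Re(b_n+sd_n)/\Re(a_n+sc_n)\in\R$, which places the upper-right entry of $u_sg_nu_{t_n(s)}$ exactly on $i\R$, equal to $i\nu_n(s)$ with $\nu_n(s)=\Im(b_n+sd_n)-\tfrac{\Im(a_n+sc_n)}{\Re(a_n+sc_n)}\Re(b_n+sd_n)$.

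\emph{Choosing the scale.} Everything then reduces to choosing $s=s_n$, with $u_{s_n}\in T$ and $|s_n|\to\infty$, so that $s_nc_n\to 0$ (making the upper-left entry $\to 1$) while $\nu_n(s_n)$ converges to a fixed nonzero real of size $O(\epsilon)$. Thickness of $T$ at $\infty$ is exactly the tool here: for any prescribed scale $R_n\to\infty$ it supplies $s_n\in T$ with $|s_n|\in[R_n,kR_n]$. I would take $R_n\asymp\epsilon/\Delta_n$ with $\Delta_n:=\max\{|\Im a_n|,|\Im d_n|,|\Im c_n|^{1/2}\}$ (which tends to $0$, and is $>0$ unless $g_n$ lies in $H=\PSL_2(\R)$). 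A short estimate then shows that with $|s_n|\asymp\epsilon/\Delta_n$ one has $s_nc_n\to 0$, while $s_n\Im a_n$ and $s_n\Im d_n$ stay $O(\epsilon)$ and $s_n^2\Im c_n$ stays $O(\epsilon^2)$; hence, after a further subsequence, $u_{s_n}g_nu_{t_n(s_n)}\to\left(\begin{smallmatrix}1 & i\mu\\ 0 & 1\end{smallmatrix}\right)=v_\mu\in\overline{AV}$ with $|\mu|=O(\epsilon)$, where $\mu=\lim\big(s_n(\Im d_n-\Im a_n)-s_n^2\Im c_n\big)$.

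\emph{Non-triviality of the limit — the main obstacle.} The delicate point is to guarantee $\mu\ne 0$. Rescaling $|s_n|$ by a bounded factor $c$ (still available inside $T$ by thickness) replaces $\mu$ by $cP-c^2Q$, where $P,Q$ are the limits of the two summands above; since these scale by the first and second power of the factor, $cP-c^2Q$ vanishes for at most one value of $c$ in any interval, \emph{unless} $P=Q=0$. Examining when $P=Q=0$ for every admissible scale forces $\Im a_n,\Im d_n=o(|c_n|^{1/2})$ and $\Im c_n=o(|c_n|)$, i.e.\ $g_n$ approaching $H$ faster than $|c_n|^{1/2}$; in that regime $c_n$ is asymptotically real and I would instead take $|s_n|\asymp\epsilon/|c_n|$, so that $s_nc_n$ converges (subsequentially) to a nonzero real $\tau$ with $|\tau|=O(\epsilon)$, the upper-left entry $\to 1+\tau\ne 1$, the now automatically bounded imaginary upper-right entry converges, and the limit is $\left(\begin{smallmatrix}1+\tau & i\mu'\\ 0 & (1+\tau)^{-1}\end{smallmatrix}\right)\in AV\setminus\{e\}$, again $O(\epsilon)$-close to $e$. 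I expect the genuinely hard ingredient to be exactly this scale selection: one must take $|s_n|$ large enough to invoke thickness at $\infty$, yet calibrated against $|c_n|$ and the imaginary parts of the entries of $g_n$ so that the renormalized limit simultaneously (i) lands in $AV$ (its upper-left entry becomes real), (ii) is bounded (the quadratic term $s^2c_n$ is controlled), and (iii) is $\ne e$; this is a polynomial-divergence/shearing estimate in the spirit of Dani--Margulis, with a case split governed by how fast $g_n\to e$ relative to the geodesic subgroup $H$.

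\emph{Conclusion.} Finally, running the construction with $\epsilon=1/m$, $m\to\infty$, produces $\ell_m\in(AV\setminus\{e\})\cap\limsup_{n\to\infty}Tg_nU$ with $\ell_m\to e$, which is the assertion.
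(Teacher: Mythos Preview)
The paper does not supply its own proof of this lemma; it is quoted from \cite[Lemma 6.1]{Benoist2022geodesic} and used as a black box. So there is nothing in the present paper to compare your argument against.

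Your approach---an explicit matrix computation with a carefully chosen scale $|s_n|$ dictated by the $k$-thickness of $T$, followed by the choice of $t_n(s_n)$ that kills the real part of the upper-right entry---is a perfectly valid and standard way to prove such a shearing lemma, and is essentially the method used in \cite{Benoist2022geodesic} and \cite{McMullen2017geodesic}. The reduction to controlling $s_n(\Im d_n-\Im a_n)-s_n^2\Im c_n$ and $s_n c_n$ is correct, and your identification of the non-triviality of the limit as the crux is exactly right.

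There is one point where your write-up is too quick. In the step ``$P=Q=0$ for every admissible scale forces $\Im a_n,\Im d_n=o(|c_n|^{1/2})$ and $\Im c_n=o(|c_n|)$'', you are implicitly using the relation $a_nd_n-b_nc_n=1$, and this needs to be said. From $P=Q=0$ at the scale $\epsilon/\Delta_n$ you get $|\Im d_n-\Im a_n|=o(\Delta_n)$ and $|\Im c_n|=o(\Delta_n^2)$; the determinant relation gives $\Im(a_nd_n)=\Im(b_nc_n)$, hence $|\Im d_n+\Im a_n|=o(\Delta_n)+O(|b_n||c_n|)$. If $|b_nc_n|=o(\Delta_n)$ you would get $|\Im a_n|,|\Im d_n|,|\Im c_n|^{1/2}$ all $o(\Delta_n)$, contradicting the definition of $\Delta_n$. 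So necessarily $|c_n|\gtrsim\Delta_n/|b_n|$, i.e.\ $\Delta_n=o(|c_n|)$, which indeed yields your stated conditions and justifies the fallback scale $\epsilon/|c_n|$. Without invoking $\det=1$ here, the implication you assert does not follow from the information available, so make that step explicit. A similar remark applies to the case $c_n\equiv 0$ that you set aside: there $g_n$ lies in $MAN\setminus AN$, and you should spell out that the scale $|s_n|\asymp\epsilon/|\Im a_n|$ produces a nontrivial $V$-limit (your formula gives $\nu_n(s_n)\approx -2s_n\Im a_n$, again using $d_n=a_n^{-1}$).

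With those two clarifications your argument goes through.
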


\begin{lemma} {\cite[Theorem 3.1]{McMullen2016horocycles}} \label{lem.seqrecurrence}
    Let $T_n \subset U$ be a sequence of $k$-thick sets for some $k > 1$. If $g_n \in G - HV$ is a sequence such that $g_n \to e$, then there exists $k'>1$ depending only on $k$, and a $k'$-thick set $V_0 \subset V$ such that $$V_0 \subset \limsup_{n\to \infty} Hg_n T_n .$$
\end{lemma}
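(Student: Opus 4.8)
The strategy is the polynomial–divergence technique for unipotent flows (Dani--Margulis, Shah), in the form developed in \cite{McMullen2016horocycles}; this statement is the ``sequential'' counterpart of Lemma~\ref{lem.recurrence}, and the plan is to reduce it to an elementary question about polynomial curves and then feed the $k$-thickness of the $T_n$ into a scale-selection argument. First I would pass to coordinates adapted to $H$. Since $\mathfrak{sl}_2(\C)=\mathfrak h\oplus i\mathfrak h$ with $\mathfrak h=\op{Lie}(H)=\mathfrak{sl}_2(\R)$, the map $(h,P)\mapsto h\exp(iP)$ is a diffeomorphism from a neighborhood of $(e,0)$ in $H\times\mathfrak h$ onto a neighborhood of $e$ in $G$, so for large $n$ we may write $g_n=h_n\exp(iP_n)$ with $h_n\in H$, $P_n\in\mathfrak h$ a real traceless matrix, and $P_n\to 0$. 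Because $U\subset H$ and $\op{Ad}(U)$ preserves $i\mathfrak h$, one has the \emph{exact} identity
\[
Hg_nu_t=H\exp\!\big(iZ_n(t)\big),\qquad Z_n(t):=\op{Ad}(u_{-t})P_n\in\mathfrak h .
\]
Moreover $v_s=\exp(isW_+)$ where $W_+=\left(\begin{smallmatrix}0&1\\0&0\end{smallmatrix}\right)$, and locally $HV=\{h\exp(iP):P\in\R W_+\}$, so the hypothesis $g_n\notin HV$ says exactly that $P_n\notin\R W_+$. Hence it suffices to exhibit a $k'$-thick set of reals $s$ for which there are $n_j\to\infty$ and $t_j\in T_{n_j}$ with $Z_{n_j}(t_j)\to sW_+$: then $\exp(iZ_{n_j}(t_j))\in Hg_{n_j}u_{t_j}$ converges to $v_s$, so $v_s\in\limsup_n Hg_nT_n$, which is closed by \eqref{eqn.deflimsup}.

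Writing $P_n=\alpha_nW_0+\beta_nW_++\gamma_nW_-$ in the standard basis $W_0=\left(\begin{smallmatrix}1&0\\0&-1\end{smallmatrix}\right),\,W_+,\,W_-=\left(\begin{smallmatrix}0&0\\1&0\end{smallmatrix}\right)$ and using $\op{Ad}(u_{-t})W_0=W_0+2tW_+$, $\op{Ad}(u_{-t})W_+=W_+$, $\op{Ad}(u_{-t})W_-=W_--tW_0-t^2W_+$, one gets
\[
Z_n(t)=(\alpha_n-\gamma_n t)\,W_0+(\beta_n+2\alpha_n t-\gamma_n t^2)\,W_++\gamma_n\,W_- ,
\]
so the lemma reduces to an elementary statement about real null sequences $\alpha_n,\beta_n,\gamma_n\to 0$ with $(\alpha_n,\gamma_n)\neq(0,0)$: produce a $k'$-thick set of $s$ such that, along a subsequence, we can choose $t_n\in T_n$ with $\alpha_n-\gamma_n t_n\to 0$ and $\beta_n+2\alpha_n t_n-\gamma_n t_n^2\to s$. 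The choice of $t_n$ is where $k$-thickness enters: a $k$-thick set meets $[\sigma,k\sigma]$ and $[-k\sigma,-\sigma]$ for every $\sigma>0$, so one can always hit a prescribed scale up to a factor $k$, with either sign. I would then split by whether the evolution is effectively linear or quadratic. If $\gamma_n=0$ along a subsequence (so $\alpha_n\neq 0$), then $\beta_n+2\alpha_n t$ is linear with slope $2\alpha_n\to 0$; for a target scale $\sigma$ set $t_n^{*}=\pm\sigma/(2|\alpha_n|)$ and pick $t_n\in T_n$ with $t_n/t_n^{*}\in[1/k,1]$, so that $\beta_n+2\alpha_n t_n$ subconverges to a value of modulus in $[\sigma/k,\sigma]$ of either sign; letting $\sigma$ vary yields a $k$-thick set of limits. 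If $\gamma_n\neq 0$ eventually, the constraint $\alpha_n-\gamma_n t_n\to 0$ forces $t_n$ within $o(1/|\gamma_n|)$ of $\alpha_n/\gamma_n$; substituting $t_n=\alpha_n/\gamma_n+r_n$ turns the $W_+$-coefficient into $(\beta_n+\alpha_n^2/\gamma_n)-\gamma_n r_n^2$, and one steers this onto a thick set of targets by choosing $r_n$ (equivalently $t_n\in T_n$) at the right scale---exploiting the linear term $2\alpha_n t_n$ when $\alpha_n^2/|\gamma_n|$ is large and the quadratic term $\gamma_n t_n^2$ when it is small---arranging the bookkeeping so that $k'$ depends only on $k$. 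Taking $V_0$ to be the closure of the set of $s$ so obtained finishes the reduction.

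The hard part will be exactly this last step. One must \emph{simultaneously} force the $W_0$- and $W_-$-components of $Z_n(t_n)$ to vanish in the limit (so that the limit genuinely lies in $V$) while moving the $W_+$-component onto a \emph{two-sided} thick set, and this must be done uniformly in $n$ so that the resulting $k'$ is a function of $k$ alone; matching the scale of $t_n$ available from the $k$-thick set $T_n$ against the competing linear and quadratic terms, and against the freedom of left translation by $H$, is the technical crux. This is precisely the content of \cite[Theorem~3.1]{McMullen2016horocycles}, whose argument---being purely group-theoretic---carries over to the present setting verbatim.
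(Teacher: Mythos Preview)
The paper itself does not prove this lemma: it is stated as a direct citation of \cite[Theorem~3.1]{McMullen2016horocycles} with no accompanying argument. Your proposal ultimately does the same thing---after the sketch you explicitly defer to the identical reference---so in that sense the two ``proofs'' agree.

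Your added sketch is a correct and useful outline of the polynomial-divergence mechanism: the local factorization $g_n=h_n\exp(iP_n)$, the identity $Hg_nu_t=H\exp(i\op{Ad}(u_{-t})P_n)$, and the explicit expression for $Z_n(t)$ are all right, as is the translation of $g_n\notin HV$ into $(\alpha_n,\gamma_n)\neq(0,0)$. The genuine content, as you yourself flag, is the scale-selection step in the quadratic case: one must pick $t_n\in T_n$ so that \emph{simultaneously} $\alpha_n-\gamma_n t_n\to 0$ and $\beta_n+\alpha_n^2/\gamma_n-\gamma_n r_n^2$ lands in a two-sided thick target, and the interaction between the constraint $r_n=o(1/|\gamma_n|)$ and the thickness of $T_n$ (which only guarantees points at scale $[\sigma,k\sigma]$, not near a prescribed center $\alpha_n/\gamma_n$) is delicate. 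You do not carry this out, and neither does the present paper; both simply invoke \cite{McMullen2016horocycles}. So your proposal is acceptable as a proof in the same sense the paper's is, with the bonus of an honest roadmap.
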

 
\subsection*{Unipotent blowup: polynomials}
We now discuss the unipotent blowup lemma involving some polynomials, which was studied by Dani-Margulis \cite{Dani1989} and Shah \cite{Shah1992master}. For simplicity, we use the following notations: for $z \in \C - \{0\}$ and $s \in \R$,
$$d(z) = \begin{pmatrix}
z^{-1} & 0 \\
0 & z
\end{pmatrix} \in AM \quad \mbox{and} \quad
v(s) = 
\begin{pmatrix}
  1 & is \\
  0 & 1
\end{pmatrix} \in V.$$
Using the notations in \eqref{eqn.notation}, $d(z) = a_{- 2 \log |z|} a_{- 2 i \arg(z)}$ and $v(s) = v_s$.
We will consider real polynomials $\sigma, \nu \in \R[t]$ and $d(\sigma(t))v(\nu(t)) \in AV$, or a complex polynomial $\sigma \in \C[t]$ and $d(\sigma(t)) \in AM$, for $t \in \R$ such that $\sigma(t) \neq 0$.
The following is the unipotent blowup with polynomials:

\begin{lemma} \cite[Proposition 4.3.2]{Shah1992master} \label{lem.qr2}
    Let $S < G$ be either $U$ or $N$. Let $g_n \in G - \op{N}_G(S)$ be a sequence converging to the identity $e \in G$.
    \begin{enumerate}
        \item If $S = U$, then there exist polynomials $\sigma, \nu \in \R[t]$ such that at least one of them is non-constant, $\sigma(0) = 1$, $\nu(0) = 0$, and
        $$
        d(\sigma(t)) v(\nu(t)) \in \limsup_{n \to \infty} U g_n U \quad \text{for all } t \in \R \text{ with } \sigma(t)  \neq 0.
        $$

        \item If $S = N$, then there exists a non-constant polynomial $\sigma \in \C[t]$ such that $\sigma(0) = 1$ and 
        $$
        d(\sigma(t)) \in \limsup_{n \to \infty} N g_n U \quad \text{for all } t \in \R \text{ with } \sigma(t) \neq 0.
        $$
    \end{enumerate}
\end{lemma}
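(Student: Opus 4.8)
The statement is a concrete instance of the polynomial divergence lemma of Dani--Margulis \cite{Dani1989} and Shah \cite{Shah1992master}; I would reprove it by an explicit matrix computation in $\op{SL}_2(\C)$, exploiting that the normalizers in question are essentially triangular subgroups. Fix a lift $g_n = \begin{pmatrix} a_n & b_n \\ c_n & d_n \end{pmatrix} \in \op{SL}_2(\C)$; since $g_n \to e$ we have $a_n, d_n \to 1$, $b_n, c_n \to 0$ and $a_n d_n - b_n c_n = 1$. Multiplying out, for $x, s \in \R$ and $z \in \C$,
\[
u_x g_n u_s = \begin{pmatrix} a_n + x c_n & \ast \\ c_n & c_n s + d_n \end{pmatrix}, \qquad n_z g_n u_s = \begin{pmatrix} a_n + z c_n & \ast \\ c_n & c_n s + d_n \end{pmatrix},
\]
where in each case the top-right entry is determined by $\det = 1$; when $c_n \neq 0$ it equals $(PQ-1)/c_n$ with $P,Q$ the two diagonal entries. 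Since $\limsup$ is closed (Definition \ref{def.limsup1}), it suffices to exhibit, for each parameter value, sequences in $Ug_nU$ (resp.\ $Ng_nU$) converging to the claimed curve, and we pass to subsequences freely, e.g.\ so that $c_n/|c_n| \to \omega \in \S^1$ whenever $c_n \neq 0$. Observe that any such limit curve passes through $\lim_n g_n = e = d(1)v(0)$ at parameter $t = 0$, which automatically yields $\sigma(0) = 1$ and $\nu(0) = 0$.

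\emph{Case $S = N$.} Here $\op{N}_G(N) = MAN$ is the full upper-triangular (Borel) subgroup, so $g_n \notin \op{N}_G(N)$ forces $c_n \neq 0$. For $t \in \R$ set $s_n := t/|c_n|$ and $z_n := -(a_n s_n + b_n)/(c_n s_n + d_n) \in \C$, which is well-defined for large $n$ whenever $1 + \omega t \neq 0$ since $c_n s_n + d_n \to 1 + \omega t$. The choice of $z_n$ kills the top-right entry, and the relation $a_n d_n - b_n c_n = 1$ collapses the product:
\[
n_{z_n} g_n u_{s_n} = \begin{pmatrix} (c_n s_n + d_n)^{-1} & 0 \\ c_n & c_n s_n + d_n \end{pmatrix} \longrightarrow \begin{pmatrix} (1+\omega t)^{-1} & 0 \\ 0 & 1 + \omega t \end{pmatrix} = d(1 + \omega t).
\]
Thus with $\sigma(t) := 1 + \omega t \in \C[t]$ (non-constant, $\sigma(0) = 1$) we get $d(\sigma(t)) \in \limsup_{n} N g_n U$ for every $t$ with $\sigma(t) \neq 0$, proving (2).

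\emph{Case $S = U$.} In a neighborhood of $e$, $\op{N}_G(U)$ coincides with $AN$ (the upper-triangular matrices with positive real diagonal), so $g_n \notin \op{N}_G(U)$ means, after passing to a subsequence, that either (i) $c_n \neq 0$ for all $n$, or (ii) $c_n = 0$ for all $n$ and $g_n = \begin{pmatrix} \alpha_n & \beta_n \\ 0 & \alpha_n^{-1} \end{pmatrix}$ with $\alpha_n \notin \R$. In case (ii), $u_x g_n u_s = \begin{pmatrix} \alpha_n & \alpha_n s + \beta_n + \alpha_n^{-1} x \\ 0 & \alpha_n^{-1} \end{pmatrix}$; choosing a lift $\alpha_n = e^{i\psi_n}$ with $0 \neq \psi_n \to 0$ and taking $x_n = -s_n$, $s_n = t/(2\sin\psi_n)$, the top-right entry becomes $2is_n\sin\psi_n + \beta_n \to it$, so $u_{x_n} g_n u_{s_n} \to v(t) = d(1)v(t)$; hence $\sigma \equiv 1$ and $\nu(t) := t$ works. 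In case (i), normalize $c_n/|c_n| \to \omega$; from the display, $u_x g_n u_s = \begin{pmatrix} P & (PQ-1)/c_n \\ c_n & Q \end{pmatrix}$ with $P = a_n + x c_n$, $Q = c_n s + d_n$ and $(PQ-1)/c_n = a_n s + x c_n s + x d_n + b_n$. I would then take $x = x_n(t)$ and $s = s_n(t)$ to be real polynomials in $t$ of uniformly bounded degree with zero constant term, tuning their ($n$-dependent) coefficients so that, by virtue of $a_n d_n - b_n c_n = 1$, the divergent parts of the four entries cancel power-of-$t$ by power-of-$t$; the curves $t \mapsto u_{x_n(t)} g_n u_{s_n(t)}$ then stay bounded on compact $t$-intervals, and a subsequential limit is a polynomial curve $t \mapsto \begin{pmatrix} \sigma(t)^{-1} & i\sigma(t)^{-1}\nu(t) \\ 0 & \sigma(t) \end{pmatrix} = d(\sigma(t))v(\nu(t)) \in \limsup_{n} U g_n U$. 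When $\omega \in \R$ one can already take $s_n(t) \approx t/|c_n|$ with $x_n(t)$ matching, reaching $\sigma(t) = 1+t$ non-constant; when $\omega \notin \R$ the two diagonal loci $a_n + \R c_n$ and $d_n + \R c_n$ both converge to the line $1 + \R\omega$, which meets $\R$ only at $1$, forcing $\sigma \equiv 1$, but then the quadratic term $x_n(t) c_n s_n(t)$ survives and contributes a non-constant $\nu$ of degree $\le 2$. In either subcase at least one of $\sigma, \nu$ is non-constant.

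\emph{Expected main obstacle.} The only genuine work is case (i) of $S = U$: arranging the polynomial reparametrization $x_n(t), s_n(t)$ so that the curve stays bounded on compacta — this is the order-by-order cancellation, ultimately a consequence of $\det g_n = 1$ — and, crucially, so that the two limit polynomials are not both constant. The latter is exactly where $g_n \notin \op{N}_G(U)$ (equivalently $c_n \neq 0$ in subcase (i)) is used: if $\sigma$ and $\nu$ were both constant the limit curve would be identically $e$, which upon unwinding the reparametrization forces $g_n \in \op{N}_G(U)$ for large $n$. This bookkeeping is precisely \cite[Proposition 4.3.2]{Shah1992master} (after \cite{Dani1989}), which I would carry out in the $\op{SL}_2(\C)$ coordinates above.
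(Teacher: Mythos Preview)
Your approach is genuinely different from the paper's. The paper does not reprove Shah's construction; it quotes \cite[Proposition 4.3.2]{Shah1992master} as a black box to obtain the curve $\Phi(t)$ inside $\overline{\bigcup_n Sg_nU}$, and then devotes the entire proof to a separate, uniform upgrade from the closure of the union to $\limsup_n Sg_nU$. That upgrade is an algebraic argument: if $\Phi(t)=\lim_k s_k g_n u_k$ for a \emph{fixed} $n$, then conjugating a carefully chosen $s\in S$ by $s_k g_n u_k$ and passing to the limit forces $g_n s g_n^{-1}\in S$, contradicting $g_n\notin\op{N}_G(S)$. This handles $S=U$ and $S=N$ simultaneously with no case split on $c_n$ or $\omega$. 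By contrast, you attempt to build the sequence in $Sg_nU$ directly for each $t$ and each $n$, which when it works yields the $\limsup$ statement in one stroke. Your construction for $S=N$ is clean and correct, and your case (ii) for $S=U$ is essentially right (though you should not write $\alpha_n=e^{i\psi_n}$: nothing forces $|\alpha_n|=1$, so write $\alpha_n=r_ne^{i\psi_n}$ and solve the obvious $2\times 2$ real linear system for $x_n,s_n$ to make the upper-right entry exactly $it$).

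The real gap is your case (i) for $S=U$. Your claim that for $\omega\in\R$ one reaches $\sigma(t)=1+t$ with ``$x_n(t)$ matching'' does not close. With $s_n=t/|c_n|$ the $(2,2)$-entry tends to $1+\omega t$, but then you need the $(1,1)$-entry $a_n+x_nc_n$ to tend to $(1+\omega t)^{-1}$ \emph{and} the $(1,2)$-entry $(PQ-1)/c_n$ to tend to something purely imaginary. These are three real constraints on the single remaining real parameter $x_n$; the determinant relation does not save you because $PQ-1=O(|c_n|)$ is itself one of the constraints you must impose, not a consequence. Likewise, your sentence ``the quadratic term $x_n c_n s_n$ survives and contributes a non-constant $\nu$'' hides exactly the delicate balancing Shah carries out: which monomial scale $|c_n|^{-\alpha}$ to feed into $s_n$ depends on the relative decay of $\Im a_n$, $\Im d_n$, $b_n$ against $c_n$, and the resulting $\sigma,\nu$ can have degree larger than $1$ or $2$. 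Since you end by deferring precisely this bookkeeping to \cite{Shah1992master}, your case (i) is a sketch rather than a proof. Either complete that reparametrization argument in coordinates, or adopt the paper's route: accept Shah's output as landing in $\overline{\bigcup_n Sg_nU}$ and supply the short conjugation argument to promote it to $\limsup$.
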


\begin{proof}
    As our formulation is slightly different from \cite[Proposition 4.3.2]{Shah1992master}, let us explain how we deduce the desired statement. Given a subgroup $S < G$ which is either $U$ or $N$ and a sequence $g_n \to e$ in $G - \op{N}_G(S)$, what directly follows from \cite[Proposition 4.3.2]{Shah1992master} is that
    \begin{enumerate}
        \item if $S = U$, then there exist polynomials $\sigma, \nu \in \R[t]$ such that at least one of them is non-constant, $\sigma(0) = 1$, $\nu(0) = 0$, and
        $$
        d(\sigma(t)) v(\nu(t)) \in \ov{\bigcup_{n \in \N} U g_n U} \quad \text{for all } t \in \R \text{ with } \sigma(t)  \neq 0.
        $$

        \item if $S = N$, then there exists a non-constant polynomial $\sigma \in \C[t]$ such that $\sigma(0) = 1$ and 
        $$
        d(\sigma(t)) \in  \ov{\bigcup_{n \in \N} N g_n U} \quad \text{for all } t \in \R \text{ with } \sigma(t) \neq 0.
        $$
    \end{enumerate}
    We handle both cases simultaneously by setting
    $$\Phi(t) := \begin{cases}
    d(\sigma(t)) v(\nu(t))  & \text{if }S = U \\
    d(\sigma(t)) & \text{if } S = N
    \end{cases}$$
    for $t \in \R$ with $\sigma(t) \neq 0$, where $\sigma$ and $\nu$ are polynomials given above.

    Then it suffices to deduce that for each $t \in \R$ with $\sigma(t) \neq 0$,
    \be \label{eqn.deduceshahconc}
    \Phi(t) \in \limsup_{n \to \infty} S g_n U.
    \ee
    Suppose not. Then  $\Phi(t) \notin \limsup_{n \to \infty} S g_n U$ for some $t \in \R$ with $\sigma(t) \neq 0$. Since $\Phi(t) \in \ov{\bigcup_{n \in \N} S g_n U}$, there exists $n \in \N$ and sequences $s_k \in S$ and $u_k \in U$ such that 
    $$
    \Phi(t) = \lim_{k \to \infty} s_k g_n u_k.
    $$
    Since $g_n \notin \op{N}_G(S)$, there exists $ s \in S$ such that $g_n  s g_n^{-1} \notin S$. Noting that $S$ and $U$ commute, we have
    \be \label{eqn.deduceshah}
    \lim_{k \to \infty} s_k (g_n s g_n^{-1}) s_k^{-1} = \lim_{k \to \infty} (s_k g_n u_k) s (s_k g_n u_k)^{-1} = \Phi(t) s \Phi(t)^{-1} \in S
    \ee
    since $\Phi(t) \in \op{N}_G(S)$. 
    
    For some $a, b, c, d \in \C$ and a sequence $c_k \in \C$, we write
    $$
    g_n s g_n^{-1} = \begin{pmatrix}
a & b \\
c & d
\end{pmatrix} \quad \text{and} \quad s_k = \begin{pmatrix}
1 & c_k \\
0 & 1
\end{pmatrix} \quad \text{for all } k \in \N.
    $$
    We then have
    $$
     s_k (g_n s g_n^{-1}) s_k^{-1} = \begin{pmatrix}
    a + c c_k & b - ac_k + dc_k - c c_k^2 \\
    c & d - c c_k
\end{pmatrix}
    $$
    which converges to an element in $S$ by \eqref{eqn.deduceshah}. This implies $c = 0$, and hence
     $$
     s_k (g_n s g_n^{-1}) s_k^{-1} = \begin{pmatrix}
    a  & b + c_k(d-a)  \\
    0 & d
\end{pmatrix}.
$$
Again, since this sequence converges to an element in $S$, we must have $a = d$, and hence
$$
     s_k (g_n s g_n^{-1}) s_k^{-1} = \begin{pmatrix}
    a  & b  \\
    0 & a
\end{pmatrix} \quad \text{for all } k \in \N.
$$
Hence, we now have that the constant sequence $s_k (g_n s g_n^{-1}) s_k^{-1}$ converges to an element of $S$ as $k \to \infty$. In particular,
$$
s_k (g_n s g_n^{-1}) s_k^{-1} \in S \quad \text{for all } k \in \N.
$$
This implies $g_n s g_n^{-1} \in S$, which is a contradiction. Therefore, \eqref{eqn.deduceshahconc} follows.
\end{proof}

The following proposition was proved by Dani-Margulis when $\sigma$ is non-constant, and by Shah in general:
\begin{proposition}[{\cite[Proposition 2.4]{Dani1989},  \cite[Proposition 4.4.3]{Shah1992master}}]    \label{prop.blowup2}
  Let $\sigma, \nu \in \R[t]$ be polynomials such that one of them is non-constant. Let $t_0 \ge 0$ be such that $\sigma(t) \neq 0$ for all $t > t_0$. Defining a function $\Phi : (t_0, \infty) \to AV$ as 
  \be \label{eq.defphi}
  \Phi(t) := d(\sigma(t)) v(\nu(t)),
  \ee 
  there exists a non-trivial one-parameter subgroup $L < AV$, and for every $\ell \in L$, there exists a function $f_\ell : (0, \infty) \to \R$ such that as $t \to \infty$, $$t + f_\ell(t) \to \infty \quad \mbox{and} \quad \Phi(t)^{-1}\Phi(t + f_\ell(t)) \to \ell.$$
\end{proposition}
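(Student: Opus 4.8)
The plan is to pass to fully explicit coordinates on $AV$ and reduce the statement to a one‑variable asymptotic analysis of the polynomials $\sigma$ and $\nu$. I identify $AV$ with the $ax+b$ group by writing a general element as $a_Rv_N$ and recording it as the pair $(R,N)$; then the product law reads $(R_1,N_1)(R_2,N_2)=(R_1+R_2,\ N_1e^{-R_2}+N_2)$, one has $\Phi(t)=(-2\log|\sigma(t)|,\ \nu(t))$ (using $d(z)=a_{-2\log|z|}$ in $\PSL_2(\C)$ for real $z\ne 0$), and a direct $2\times 2$ matrix computation gives, for all $t$ and $t+f$ exceeding $t_0$,
\[
\Phi(t)^{-1}\Phi(t+f)=\Bigl(-2\log\Bigl|\tfrac{\sigma(t+f)}{\sigma(t)}\Bigr|,\ \ \nu(t+f)-\nu(t)\tfrac{\sigma(t+f)^2}{\sigma(t)^2}\Bigr)=:(\rho_f(t),\ \eta_f(t)).
\]
Two features of this identity drive the proof: $\rho_f$ depends only on $\sigma$, and the ``$\sigma^2$-part'' of $\nu$ cancels from $\eta_f$, so $\Phi(t)^{-1}\Phi(t+f)$ is unchanged if $\nu$ is replaced by $\nu-\lambda\sigma^2$ for a constant $\lambda$. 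Subtracting the appropriate such $\lambda\sigma^2$ when $\deg\nu=2\deg\sigma$, I may assume we are in exactly one of the cases: (i) $\sigma$ is constant; (ii) $\nu$ is constant; (iii) $\sigma$ and $\nu$ are both non-constant with $\deg\nu\ne 2\deg\sigma$.

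In each case I exhibit a non-trivial one-parameter subgroup $L<AV$ and, for each $\ell\in L$, a function $f_\ell$ (defined arbitrarily for the small values of $t$ where the construction does not apply); in cases (i) and (ii) the target is attained \emph{exactly}. In case (i), $\Phi(t)^{-1}\Phi(t+f)=v_{\nu(t+f)-\nu(t)}\in V$, and since the non-constant polynomial $\nu$ is eventually monotone and proper on a half-line, for each $r\in\R$ one finds $f_r$ with $\nu(t+f_r(t))=\nu(t)+r$, whence $\Phi(t)^{-1}\Phi(t+f_r(t))=v_r$ and $t+f_r(t)\to\infty$; take $L=V$. In case (ii), $\nu\equiv c$ forces $\eta_f=c(1-e^{-\rho_f})$, so $\Phi(t)^{-1}\Phi(t+f)=a_{\rho_f}v_{c(1-e^{-\rho_f})}$, and as $\rho$ ranges over $\R$ these elements fill out the conjugate subgroup $L:=v_{-c}Av_{-c}^{-1}$ (equal to $A$ when $c=0$); for the target with parameter $r$ one solves $|\sigma(t+f(t))/\sigma(t)|=e^{-r/2}$ for all large $t$, using that $|\sigma|$ is eventually monotone with $|\sigma(t)|\to\infty$, obtaining $\Phi(t)^{-1}\Phi(t+f(t))=a_rv_{c(1-e^{-r})}$ exactly and $t+f(t)\to\infty$.

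Case (iii) carries the real content, and there I take $L=V$ and argue by a limiting construction (essentially the polynomial-divergence mechanism of Dani--Margulis). Reparametrize by $u:=2\log|\sigma(t)|$, which is eventually a smooth increasing bijection onto a half-line; writing $t=t(u)$ and $P:=\nu\circ t$, the identity becomes $\Phi(t)^{-1}\Phi(t')=(-v,\ P(u+v)-P(u)e^{v})$ with $v=u'-u$. From $t(u)\sim C_0 e^{u/(2\deg\sigma)}$ one gets $P(u)=\beta e^{\kappa u}(1+o(1))$, $P'(u)=\kappa P(u)(1+o(1))$, and $P''(\xi)=O(e^{\kappa u})$ whenever $|\xi-u|\to 0$, where $\kappa:=\deg\nu/(2\deg\sigma)\ne 1$ and $\beta\ne 0$. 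Given $\eta_\infty\in\R$, set $v(u):=\tfrac{\eta_\infty}{(\kappa-1)\beta}\,e^{-\kappa u}\to 0$; inserting this into a two-term Taylor expansion of $P(u+v)$ about $u$ and of $e^{v}$ about $0$ gives $P(u+v(u))-P(u)e^{v(u)}=\eta_\infty+o(1)$ (the leading term being $(\kappa-1)\beta e^{\kappa u}v(u)=\eta_\infty$), while $-v(u)\to 0$; hence $\Phi(t)^{-1}\Phi(t+f_{\eta_\infty}(t))\to v_{\eta_\infty}$ with $t+f_{\eta_\infty}(t)=t(u+v(u))\to\infty$. So $L=V$ works in case (iii).

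The trichotomy and cases (i)--(ii) are essentially bookkeeping; the real work is case (iii), whose main obstacle is making the error estimates uniform in $u$: one must control $P,P',P''$ through the composite $u\mapsto\nu(t(u))$ (in particular justify $P'(u)/(\kappa P(u))\to 1$ and the bound on $P''$ near $u$), check that the chosen $v(u)=O(e^{-\kappa u})$ keeps $u+v(u)$ in the domain of $P$ and tending to $\infty$, and confirm that the Taylor remainders (of size $O(e^{-\kappa u})$ for this $v$) vanish in the limit. A secondary point is to verify at the outset that, after the normalization $\deg\nu\ne 2\deg\sigma$, the three cases are exhaustive and that each exhibited $L$ is a genuine non-trivial one-parameter subgroup of $AV$.
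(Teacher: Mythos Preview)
The paper does not supply its own proof of this proposition; it is quoted from \cite{Dani1989} and \cite{Shah1992master} without argument. Your proposal is a correct, self-contained verification in explicit $AV$-coordinates. The cancellation $\nu\mapsto\nu-\lambda\sigma^2$ (which leaves $\Phi(t)^{-1}\Phi(t+f)$ unchanged, as you note) and the resulting trichotomy are the right organizing device; cases (i) and (ii) are handled exactly, and in case (iii) your choice $v(u)=\eta_\infty\,((\kappa-1)\beta)^{-1}e^{-\kappa u}$ together with the first-order Taylor expansion gives
\[
P(u+v)-P(u)e^{v}=(P'(u)-P(u))v+O(e^{\kappa u}v^2)=(\kappa-1)\beta e^{\kappa u}(1+o(1))\,v+O(e^{-\kappa u})\to\eta_\infty,
\]
while $-v(u)\to 0$, so $\Phi(t)^{-1}\Phi(t+f_{\eta_\infty}(t))\to v_{\eta_\infty}$ and $L=V$ works. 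The asymptotics $P'(u)=\kappa P(u)(1+o(1))$ and $P''(\xi)=O(e^{\kappa u})$ for $\xi$ within $o(1)$ of $u$ follow from $t(u)\sim C_0e^{u/(2\deg\sigma)}$ and the chain rule, as you indicate.

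What you have written is essentially the Dani--Margulis polynomial-divergence mechanism specialized by hand to the two-dimensional solvable group $AV$: rather than invoking the general quasi-regular map, you carry out the matrix computation directly. The payoff is an explicit $f_\ell$ in every case and a short, transparent argument tailored to this paper's setting.
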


Recalling Definition \ref{def.limsup1}, we have:

\begin{corollary}\label{cor.blowup2}
Let $\Phi(t)=d(\sigma(t))v(\nu(t))$ be as in \eqref{eq.defphi}.
For any $Y\subset\Ga\ba G$, there exists a one-parameter subgroup $L<AV$ such that
$$
\limsup_{t\to \infty}Y\Phi(t) \text{ is invariant under } L.$$
\end{corollary}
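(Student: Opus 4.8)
The plan is to deduce the corollary directly from Proposition \ref{prop.blowup2} by a diagonal–sequence argument. Apply that proposition to the function $\Phi(t) = d(\sigma(t))v(\nu(t))$ to obtain a non-trivial one-parameter subgroup $L < AV$ together with, for each $\ell \in L$, a function $f_\ell : (0,\infty) \to \R$ such that $t + f_\ell(t) \to \infty$ and $\Phi(t)^{-1}\Phi(t + f_\ell(t)) \to \ell$ as $t \to \infty$. Since $L$ is a group, it is enough to show that $\limsup_{t\to\infty} Y\Phi(t)$ is invariant under right multiplication by every $\ell \in L$.

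So fix $\ell \in L$ and $x \in \limsup_{t\to\infty} Y\Phi(t)$. By Definition \ref{def.limsup1} there are sequences $t_n \to \infty$ and $y_n \in Y$ with $y_n\Phi(t_n) \to x$. Set $t_n' := t_n + f_\ell(t_n)$; by the defining property of $f_\ell$ we have $t_n' \to \infty$, so after discarding finitely many indices we may assume $t_n' > t_0$ and hence $\Phi(t_n')$ is defined. Writing
$$
y_n\Phi(t_n') = \bigl(y_n\Phi(t_n)\bigr)\cdot\bigl(\Phi(t_n)^{-1}\Phi(t_n')\bigr),
$$
the first factor converges to $x$ by choice of the sequences, while the second converges to $\ell$ by Proposition \ref{prop.blowup2}. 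Hence $y_n\Phi(t_n') \to x\ell$ with $t_n' \to \infty$ and $y_n \in Y$, which by Definition \ref{def.limsup1} gives $x\ell \in \limsup_{t\to\infty} Y\Phi(t)$.

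Since $\ell \in L$ and $x$ were arbitrary, the set $\limsup_{t\to\infty} Y\Phi(t)$ is invariant under $L$ (it is also automatically closed by \eqref{eqn.deflimsup}), which proves the corollary. I do not expect any real obstacle here: the only point needing a moment's care is that $\Phi$ is only defined on $(t_0,\infty)$, which is harmless because $t_n' \to \infty$. All of the substance — in particular the existence of the subgroup $L$ and the reparametrizing functions $f_\ell$ — is contained in Proposition \ref{prop.blowup2}, and this corollary is merely its restatement in the language of topological limsups of $\Phi$-translates.
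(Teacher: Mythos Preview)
Your proof is correct and essentially identical to the paper's own argument: both apply Proposition \ref{prop.blowup2} to obtain $L$ and the functions $f_\ell$, pick $x=\lim y_n\Phi(t_n)$, and use the factorization $y_n\Phi(t_n+f_\ell(t_n))=(y_n\Phi(t_n))(\Phi(t_n)^{-1}\Phi(t_n+f_\ell(t_n)))$ to conclude $x\ell\in\limsup_{t\to\infty}Y\Phi(t)$.
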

\begin{proof}
    Let  $X_0:=\limsup_{t \to \infty}Y\Phi(t)$ and $x_0\in X_0$ be arbitrary.
    Then $x_0 = \lim_{n \to \infty} y_n\Phi(t_n)$ for some sequences $y_n\in Y$ and $t_n\to\infty$ as $n\to\infty$.
    We then apply Proposition \ref{prop.blowup2}: let $L < AV$ and $\{f_\ell : \ell \in L \}$ be as in the proposition associated to $\Phi(t)$.
    Then for every $\ell \in L$, $t_n + f_\ell(t_n) \to \infty$ and
    $$
    y_n\Phi(t_n+f_\ell(t_n)) = y_n\Phi(t_n)(\Phi(t_n)^{-1}\Phi(t_n+f_\ell(t_n))) \to x_0 \ell.
    $$ This implies that
    $$x_0 \ell = \lim_{n \to \infty} y_n\Phi(t_n+f_\ell(t_n)) \in X_0.$$
    Since $x_0\in X_0$ and $\ell \in L$ are arbitrary, we get $X_0L\subset X_0$, and hence $X_0L=X_0$.
\end{proof}

The following is standard (cf. \cite[Lemma 2.2.2]{Shah1992master}):

\begin{lemma}\label{lem.1psg}
    If $L$ is a one-parameter subgroup of $AV$, then either $L = V$ or $L=vAv^{-1}$ for some $v\in V$.
\end{lemma}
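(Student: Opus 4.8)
The plan is to describe $AV$ concretely and classify its one-parameter subgroups by a direct Lie-algebra computation. First I would write down the Lie algebra $\mathfrak{a} + \mathfrak{v}$: with $\mathsf{X} = \begin{pmatrix} 1/2 & 0 \\ 0 & -1/2 \end{pmatrix}$ spanning $\mathfrak{a}$ (so $\exp(t\mathsf{X}) = a_t$) and $\mathsf{Y} = \begin{pmatrix} 0 & i \\ 0 & 0 \end{pmatrix}$ spanning $\mathfrak{v}$ (so $\exp(s\mathsf{Y}) = v_s$), one has $[\mathsf{X}, \mathsf{Y}] = \mathsf{Y}$, so $\mathfrak{a} + \mathfrak{v}$ is the non-abelian $2$-dimensional Lie algebra. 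A one-parameter subgroup $L$ is $\exp(\R \mathsf{Z})$ for some $\mathsf{Z} = \alpha \mathsf{X} + \beta \mathsf{Y}$, and the claim reduces to: if $\alpha = 0$ then $L = \exp(\R \mathsf{Y}) = V$, while if $\alpha \ne 0$ then $L = v A v^{-1}$ for a suitable $v \in V$.

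The key computational step is the case $\alpha \ne 0$. Here I would exhibit $v \in V$ conjugating $\mathsf{X}$ to a multiple of $\mathsf{Z}$. Concretely, $\operatorname{Ad}(v_s)\mathsf{X} = v_s \mathsf{X} v_s^{-1} = \mathsf{X} - s[\mathsf{Y},\dots]$; a direct matrix computation gives $v_s \mathsf{X} v_{-s} = \begin{pmatrix} 1/2 & -is \\ 0 & -1/2 \end{pmatrix} = \mathsf{X} - s\,\mathsf{Y}$ (up to checking the sign/factor, which is the routine part). Thus $\operatorname{Ad}(v_s)\mathsf{X} = \mathsf{X} - s\mathsf{Y}$, and choosing $s = -\beta/\alpha$ yields $\operatorname{Ad}(v_s)\mathsf{X} = \mathsf{X} + (\beta/\alpha)\mathsf{Y} = (1/\alpha)\mathsf{Z}$. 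Exponentiating, $v_s A v_s^{-1} = v_s \exp(\R\mathsf{X}) v_s^{-1} = \exp(\R \operatorname{Ad}(v_s)\mathsf{X}) = \exp(\R \mathsf{Z}) = L$, as desired. The case $\alpha = 0$ is immediate since then $\mathsf{Z} \in \mathfrak{v}$ and $L = V$.

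I would then note that these two families are genuinely distinct — $V$ is normal in $AV$ (it is the commutator/unipotent part) whereas the conjugates $vAv^{-1}$ for $v \in V$ are the non-normal one-parameter subgroups — so the dichotomy in the statement is exhaustive and (essentially) exclusive, matching the standard fact referenced as \cite[Lemma 2.2.2]{Shah1992master}. The main obstacle, such as it is, is purely bookkeeping: getting the normalization of $\mathsf{X}$ and $\mathsf{Y}$ right so that $a_t = \exp(t\mathsf{X})$ and $v_s = \exp(s\mathsf{Y})$ with the conventions of \eqref{eqn.notation}, and tracking the precise scalar in the bracket relation $[\mathsf{X},\mathsf{Y}] = \mathsf{Y}$ so that the conjugating parameter $s = -\beta/\alpha$ comes out correctly. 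There is no conceptual difficulty; the proof is a short explicit computation in $\mathfrak{sl}_2(\C)$, and I would present it in that compressed form.
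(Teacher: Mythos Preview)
Your argument is correct: the Lie-algebra computation with $[\mathsf{X},\mathsf{Y}]=\mathsf{Y}$ and $\operatorname{Ad}(v_s)\mathsf{X}=\mathsf{X}-s\mathsf{Y}$ checks out exactly as written, and exponentiating gives the claimed dichotomy. The paper does not actually prove this lemma---it simply records it as ``standard'' with a reference to \cite[Lemma 2.2.2]{Shah1992master}---so your explicit computation supplies precisely the routine verification the paper omits.
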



\section{Expansion of $N$-orbits} \label{sec.exphoro}

In this section, we discuss some properties of horospheres in a complete hyperbolic 3-manifold $\Ga \ba \H^3$, which correspond to $N$-orbits in $\FM = \Ga \ba G$. 
We begin with the classification of $N$-orbit closures. The following was proved by Ferte, and the last claim follows from Lemma \ref{lem.onlyrank2}.

\begin{lemma} \cite[Theorem A, Theorem B]{ferte_horosphere} \label{lem.ferte}
    Let $\Ga$ be a non-elementary Kleinian group and $\M = \Ga \ba \H^3$. Let $x = [g] \in \RFPM$ for $g \in G$. 
    \begin{enumerate}
        \item If $g^+$ is a conical limit point, then
        $$\ov{xN} = \RFPM.$$
        \item If $g^+$ is a parabolic limit point of rank 1, then
        $xN$ is closed and not compact. 
        \item If $g^+$ is a parabolic limit point of rank 2, then $xN$ is compact.
    \end{enumerate}
    In particular, if $\Ga$ is geometrically finite with a round Sierpi\'nski limit set, then for any $x \in \RFPM$, either (1) or (3) occurs.
\end{lemma}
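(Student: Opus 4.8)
The statement splits into the trichotomy (1)--(3), which is exactly Ferte's classification of orbit closures for the expanding horospherical subgroup $N$, and the concluding ``in particular'' clause, which is a formal consequence of (1)--(3) together with Lemma \ref{lem.onlyrank2}. So the plan is: obtain (1)--(3) by quoting \cite[Theorems A, B]{ferte_horosphere} after matching conventions, and then deduce the last clause from the structure of the limit set of a geometrically finite group.

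For the trichotomy, the dichotomy in Ferte's work is governed by whether $g^+$ is a \emph{horospherical limit point} of $\Ga$. A conical limit point is always horospherical, and for such a point Ferte's density theorem gives $\ov{xN}=\RFPM$, using that $\RFPM=\{[h]:h^+\in\La\}$ is precisely the closure of the union of all horospherical $N$-orbits resting on $\La$; this is (1). If instead $g^+$ is a parabolic fixed point, it is not a horospherical limit point, so by Ferte's theorem $xN$ is closed. Conjugating so that $g^+=\infty$, the orbit map identifies $xN$ with the quotient of $N\cong\C$ by the rank-$r$ discrete subgroup $g^{-1}(\Ga\cap N)g$, where $r\in\{1,2\}$ is the rank of $g^+$; this is an infinite cylinder when $r=1$ (closed, non-compact) and a flat torus when $r=2$ (compact, and here closedness is automatic since the orbit is a continuous image of a compact set). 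This gives (2) and (3). I would note that the only genuinely dynamical ingredient borrowed here is Ferte's density statement at conical points.

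It then remains to prove the concluding clause, which is where Lemma \ref{lem.onlyrank2} enters. Suppose $\Ga$ is geometrically finite with a round Sierpi\'nski limit set and let $x=[g]\in\RFPM$, so $g^+\in\La$. By geometric finiteness (Section \ref{sec.3mnfd}), $\La$ is the disjoint union of its conical limit points and finitely many $\Ga$-orbits of bounded parabolic points. If $g^+$ is conical, case (1) applies. Otherwise $g^+$ is a bounded parabolic point, in particular a parabolic fixed point, so Lemma \ref{lem.onlyrank2} forces its rank to be $2$, and case (3) applies. Hence case (2) never occurs, and every $x\in\RFPM$ falls into (1) or (3), as claimed.

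The deduction above is routine once Ferte's theorems are granted; the real obstacle is internal to \cite{ferte_horosphere}, namely establishing density in $\RFPM$ of the $N$-orbit through a conical limit point (and non-density at bounded parabolic points), which rests on recurrence/mixing of the geodesic flow — this I would not reprove.
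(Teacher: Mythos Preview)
Your proposal is correct and matches the paper's approach exactly: the paper does not prove (1)--(3) at all but simply attributes them to Ferte, and it states that the final clause follows from Lemma \ref{lem.onlyrank2}, which is precisely the deduction you spell out. Your additional detail on matching Ferte's horospherical/non-horospherical dichotomy to the conical/parabolic trichotomy and identifying $xN$ with a rank-$r$ quotient of $N$ is a helpful elaboration, but it goes beyond what the paper itself supplies.
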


One key feature of horospheres is the expansion along frame flows in negative time as in the following lemma, which is a consequence of the equidistribution result due to Winter \cite{Winter2015mixing}:

\begin{lemma} \cite[Theorem 6.1]{Winter2015mixing} \label{lem.eqd}
    Let $\Ga < G$ be a Zariski dense geometrically finite Kleinian group and $\M = \Ga \ba \H^3$. Let $x\in \RFPM$.
    For any sequence $t_n\to+\infty$ in $\bb R$ and $m_n\in M$, we have
    $$
    \limsup_{n\to\infty}xNm_na_{-t_n}=\RFPM.
    $$
\end{lemma}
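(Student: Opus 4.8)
The plan is to deduce this from the equidistribution of expanding horospheres, which is the content of \cite[Theorem 6.1]{Winter2015mixing}. The underlying mechanism is mixing of the frame flow on $\Ga\ba G$ with respect to the Bowen--Margulis--Sullivan measure $m^{\BMS}$ --- a finite measure because $\Ga$ is geometrically finite --- which holds since $\Ga$ is Zariski dense; via the standard thickening argument this yields that, for $x\in\RFPM$, the suitably normalized Patterson--Sullivan measures on expanding pieces of the horosphere $xN$, translated by $a_{-t}$ as $t\to+\infty$, converge weakly to a multiple of the Burger--Roblin measure $m^{\mathrm{BR}}$ attached to $N$. Recall that in Hopf coordinates $m^{\mathrm{BR}}$ is, up to conformal factors, the product of $\mu^{\mathrm{PS}}$ in the forward-endpoint variable with Lebesgue measure in the remaining directions, so $\op{supp}(m^{\mathrm{BR}}) = \{[g] : g^+\in\La\} = \RFPM$. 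The inclusion ``$\subseteq$'' is then immediate: $N$, $M$ and $A$ all fix $\infty\in\widehat\C$, so for a representative $g$ of $x$ we have $(gnm_na_{-t_n})^+ = g^+\in\La$ for every $n\in N$; hence $xNm_na_{-t_n}\subseteq\RFPM$ for all $n$, and since $\RFPM$ is closed, \eqref{eqn.deflimsup} gives $\limsup_n xNm_na_{-t_n}\subseteq\RFPM$.

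For ``$\supseteq$'', I would first rewrite the sets in clean expanding-horosphere form. Since $M$ and $A$ normalize $N$ and $a_{-t}^{-1}n_za_{-t}=n_{ze^{t}}$, for every $r>0$ we have $x\{n_z:|z|\le r\}\,m_na_{-t_n} = (xm_na_{-t_n})\{n_z:|z|\le re^{t_n}\}$; letting $r\to\infty$ this shows $xNm_na_{-t_n} = (xm_n)Na_{-t_n}$, the right $a_{-t_n}$-translate of the horosphere $(xm_n)N$. As $N$ is the unstable horospherical subgroup for the flow $t\mapsto a_{-t}$, this is an expanding horospherical orbit whose base point $xm_n$ ranges in the compact set $xM\subseteq\RFPM$ (using $(gm)^{\pm}=g^{\pm}$). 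Because $\La$ is infinite, $g^{-1}(\La)\cap\C\neq\emptyset$ for any representative $g$, so the Patterson--Sullivan measure on each such horosphere is nonzero, and one may fix a radius $r_0$ for which the window $\{n_z:|z|\le r_0\}$ has positive mass uniformly over $xM$. Applying \cite[Theorem 6.1]{Winter2015mixing} --- in its form locally uniform in the base point --- to the pieces $(xm_na_{-t_n})\{n_z:|z|\le r_0e^{t_n}\}\subseteq xNm_na_{-t_n}$, their normalized measures converge weakly to a \emph{positive} multiple of $m^{\mathrm{BR}}$. Since $\op{supp}(m^{\mathrm{BR}})=\RFPM$, every nonempty relatively open subset of $\RFPM$ is met by $xNm_na_{-t_n}$ for all large $n$, whence $\RFPM\subseteq\limsup_n xNm_na_{-t_n}$.

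I expect the one delicate point to be the bookkeeping around the moving base point $xm_n$ and the $M$-component: one needs \cite[Theorem 6.1]{Winter2015mixing} uniformly for base points in the compact set $xM$ (alternatively, pass to a subsequence along which $xm_n$ converges in $\RFPM$ and compare $xm_nNa_{-t_n}$ with the horosphere at the limit point, absorbing the transverse perturbation --- this needs a little care because $t_n\to\infty$ at a rate unrelated to the convergence of $xm_n$), and one records that the factor $m_n\in M$ only rotates within the compact fiber and hence does not affect the limiting measure. The remaining ingredients are standard and already contained in \cite{Winter2015mixing}: there is no escape of mass into the cusps for geometrically finite $\Ga$, so the weak limit is genuinely (a positive multiple of) $m^{\mathrm{BR}}$ and not a defective measure; and $\op{supp}(m^{\mathrm{BR}})=\RFPM$, which follows from the Hopf-coordinate description of $m^{\mathrm{BR}}$ together with $\op{supp}(\mu^{\mathrm{PS}})=\La$.
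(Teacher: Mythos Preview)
Your proposal is correct and matches the paper's approach: the lemma is stated with a direct citation to \cite[Theorem 6.1]{Winter2015mixing} and no separate proof is given, so deducing the statement from Winter's equidistribution of expanding horospheres toward $m^{\mathrm{BR}}$ (with $\op{supp}(m^{\mathrm{BR}})=\RFPM$) is exactly what is intended. For the bookkeeping you flag as delicate, note that since $M$ and $A$ commute you may write $xNm_na_{-t_n}=xNa_{-t_n}m_n$, keeping the base point fixed at $x$; then pass to a subsequence $m_n\to m$ and use that $\RFPM$ is right $M$-invariant to absorb the $m_n$ factor, which removes the need for uniformity over $xM$.
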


In the rest of the section, let $\Ga < G$ be a geometrically finite Kleinian group with a round Sierpi\'nski limit set $\La$, and $\M = \Ga \ba \H^3$. Note that both Lemma \ref{lem.ferte} and Lemma \ref{lem.eqd} apply to $\M$.
From the expansion of horospheres above, the classification of $N$-invariant subsets follows:

\begin{proposition} \label{prop.classifyhoro}
    Let $X_0\subset\RFPM$ be a closed $N$-invariant set.
    Then either $X_0=\RFPM$, or there exists $\eta>0$ such that $X_0\subset F_\eta(N)$.
\end{proposition}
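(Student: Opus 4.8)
The plan is to use the expansion of horospheres (Lemma \ref{lem.eqd}) to show that if $X_0$ is not contained in any $F_\eta(N)$, then it must be all of $\RFPM$. So suppose $X_0 \not\subset F_\eta(N)$ for every $\eta > 0$. The idea is that a point $x$ in $X_0$ with $\V(x)$ very large — or more precisely, $x \in X_0$ whose $N$-orbit is ``wide'' — can be flowed backward under $a_{-t}$ to expand its horosphere, and in the limit one recovers all of $\RFPM$ by Lemma \ref{lem.eqd}. The subtlety is that Lemma \ref{lem.eqd} requires $x \in \RFPM$, i.e.\ $xN$ is a genuine horosphere based at a limit point, which by Lemma \ref{lem.ferte} is automatic in our setting (every point of $\RFPM$ has $xN$ either compact or dense). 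So I need to produce a suitable $x \in X_0$ to which the expansion lemma applies.

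First I would dispose of the easy case: if $X_0$ contains some $x$ with $x^+$ a conical limit point, then $\ov{xN} = \RFPM$ by Lemma \ref{lem.ferte}(1), and since $X_0$ is closed and $N$-invariant we get $\RFPM = \ov{xN} \subset X_0$, hence $X_0 = \RFPM$ and we are done. So assume instead that every $x \in X_0$ has $x^+$ a parabolic limit point of rank $2$, so that $xN$ is compact for all $x \in X_0$ (Lemma \ref{lem.ferte}(3)), and $\V(x)$ is well-defined on $X_0$. Now if $\sup_{x \in X_0} \V(x) = \xi < \infty$, then by definition $X_0 \subset F_\xi(N)$ and we are in the second alternative with $\eta = \xi$. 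Otherwise there is a sequence $x_n \in X_0$ with $\V(x_n) \to \infty$. Using the scaling relation \eqref{eq.scale}, $\V(x_n a_{-t_n} n) = e^{2t_n} \V(x_n)$ for $n \in N$; I instead want to flow in the direction that \emph{shrinks} the recorded volume so that the horosphere expands, so replace $x_n$ by $y_n := x_n a_{t_n}$ for suitable $t_n \to +\infty$ chosen so that $\V(y_n) = e^{-2t_n}\V(x_n)$ stays in a fixed compact range, say $\V(y_n) \in [1,e^2]$; this is possible precisely because $\V(x_n)\to\infty$. Since $X_0$ is $A$-invariant? — no, $X_0$ is only $N$-invariant, so $y_n$ need not lie in $X_0$.

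This is the main obstacle, and the fix is to argue directly with limsups rather than moving points inside $X_0$. Keep $x_n \in X_0$ with $\V(x_n) \to \infty$, pass to a subsequence so that $x_n a_{t_n} \to z$ for some $z \in \RFPM$ and appropriate $t_n \to +\infty$ with $e^{-2t_n}\V(x_n)$ bounded above and below (here I use that $\pi^{-1}(\core \M) - \inte(F_\xi)$ is compact together with the fact that the basepoints of the $x_n a_{t_n}$, lying on horospheres of bounded volume, stay in a compact part of $\Ga\ba G$ — one may need $M$-translates $m_n$ as well, i.e.\ work with $x_n m_n a_{t_n}$, to get convergence). Now $x_n N$ is compact and contained in $X_0$, so $x_n N a_{t_n} = x_n a_{t_n}(a_{-t_n} N a_{t_n}) = x_n a_{t_n} N$ is contained in $X_0 a_{t_n}$; more usefully, since $a_{-t_n} N a_{t_n} = N$, we have $x_n a_{t_n} N \subset X_0 \cdot a_{t_n}$, which is not obviously in $X_0$ either. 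Instead apply Lemma \ref{lem.eqd} in the form: for the point $x_n \in \RFPM$ one has $\limsup_m x_n N a_{-s_m} = \RFPM$; but I want a single point. The clean route: take the limit point $z = \lim x_n m_n a_{t_n}$; then for any $w \in \RFPM$ and any $\e > 0$, by Lemma \ref{lem.eqd} applied to $z$ (with the roles reversed, flowing $z$ back along $a_{-t}$ and translating the horosphere $zN$) — actually simplest is: $z N a_{-t} \to \RFPM$ as $t\to+\infty$, so there are $n_j \in N$, $s_j \to +\infty$ with $z n_j a_{-s_j} \to w$. Then $z n_j a_{-s_j} = \lim_n x_n m_n a_{t_n} n_j a_{-s_j} = \lim_n x_n m_n n_j' a_{t_n - s_j}$ where $n_j' = a_{t_n} n_j a_{-t_n}\cdot(\text{conjugation})$ — and for $t_n > s_j$, since $X_0 N = X_0$ and $X_0$ is $N$-invariant but we have an $m_n$ and an $a$ in the way.

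Let me state the cleaner final argument: I claim it suffices to show $z \in X_0$ for some $z$ with $zN$ noncompact (i.e.\ $z^+$ conical), because then Lemma \ref{lem.ferte}(1) finishes as above; and such $z$ arises as a limit of $x_n \in X_0$ whose horospheres $x_nN$, after backward flow, spread out. Concretely: fix $x_1 \in X_0$ and flow $x_1 N a_{-t} = x_1 a_{-t} N$ (using $a_{-t}Na_{t}=N$, so $x_1 a_{-t} N = x_1 N a_{-t}$ after reindexing $N$); since $X_0 N = X_0$, the set $x_1 a_{-t} N = x_1 N a_{-t}$... is in $X_0 a_{-t}$. So $N$-invariance alone does not let the flow stay in $X_0$; what does is this: $X_0$ is closed and $N$-invariant, and by Lemma \ref{lem.eqd} the set $\limsup_{t\to\infty} x_1 N m a_{-t}=\RFPM$, but each $x_1 N m a_{-t}$ is a single horosphere, not inside $X_0$. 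Therefore the honest statement being proved must really be derived by contradiction using compactness: if $X_0 \subsetneq \RFPM$ is $N$-invariant closed and $X_0 \not\subset F_\eta(N)$ for all $\eta$, pick $w \in \RFPM \setminus X_0$ with an open neighborhood $\mathcal{O}$ disjoint from $X_0$; pick $x_n \in X_0$ with $\V(x_n)\to\infty$; write $x_n = [g_n]$; flow back: $g_n a_{-t_n}$ with $t_n\to+\infty$ chosen so the horosphere volume normalizes, pass to a subsequence so $[g_n a_{-t_n}] \to z \in \RFPM$ (compactness of the normalized-volume locus); then the horospheres $[g_n a_{-t_n}] N \subset X_0 a_{-t_n}$ — not in $X_0$. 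I think the correct proof does NOT keep things in $X_0$ under $A$; rather it observes that since $x_n N$ is compact and lies in $X_0$, and $\V(x_n)\to\infty$, the horospheres $x_n N$ themselves become long and, passing to a limit, limit onto a \emph{full} $N$-orbit closure inside $X_0$ — i.e.\ a conical one — because a sequence of compact $N$-orbits with volumes $\to\infty$ cannot have all limits be compact $N$-orbits (those have bounded volume on compacta), forcing a limit point $z\in X_0$ with $z^+$ not parabolic-rank-2, hence conical by Lemma \ref{lem.onlyrank2}, hence $\ov{zN}=\RFPM\subset X_0$. The hard part will be making this limiting argument precise — specifically showing that $z^+$ cannot be a parabolic fixed point, which uses that there are only countably many parabolic points and that near each the volume function $\V$ is bounded below on $X_0\cap F_{\xi_\M}(N)$ by the discreteness of $\Ga$ and Lemma \ref{lem.onlyrank2}; equivalently, that the ``cusp exit'' $\pi^{-1}(\core\M)-\inte(F_\xi)$ is compact, so that $x_n$ with $\V(x_n)\to\infty$ must have basepoints escaping into the cusp neighborhoods, and the limit $z$ of the backward-flowed frames sits on the thick part with $z^+$ conical.
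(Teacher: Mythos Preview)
Your overall plan is right, and the reduction to the case where every $x\in X_0$ has $x^+$ parabolic (so $xN$ compact) is exactly how the paper begins. But in the parabolic case you never find the clean mechanism, and the final paragraph is still a sketch with a real gap. The missing idea is purely structural: since $\Ga$ is geometrically finite, there are only finitely many $\Ga$-orbits of parabolic points, so there exist finitely many $z_1,\dots,z_k\in\RFPM$ with $X_0\subset\bigcup_i z_i NMA$. Given $x_n\in X_0$ with $\V(x_n)\to\infty$, pass to a subsequence so all $x_n$ lie in a single $z NMA$, and write $x_n=zp_nm_na_{-t_n}$ with $p_n\in N$, $m_n\in M$, $t_n\in\R$. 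From $\V(x_n)=e^{2t_n}\V(z)$ one gets $t_n\to+\infty$. Now the key algebraic point you keep circling around: because $MA$ normalizes $N$, the entire compact horosphere $x_nN$ equals $zNm_na_{-t_n}$, and this set lies in $X_0$ by $N$-invariance alone---no $A$-invariance of $X_0$ is needed. Lemma~\ref{lem.eqd} applied to the fixed point $z$ then gives $\RFPM=\limsup_n zNm_na_{-t_n}\subset X_0$, a contradiction.

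Your attempts all stumble on the same issue: you try either to move the \emph{point} $x_n$ by $A$ (which leaves $X_0$) or to produce a limit point $z\in X_0$ with $z^+$ conical (which is harder and unnecessary). The paper's proof sidesteps both by recognizing that the \emph{$N$-orbit} $x_nN$ is already an $MA$-translate of a fixed horosphere $zN$, so Lemma~\ref{lem.eqd} applies directly to the family of sets $x_nN\subset X_0$.
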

\begin{proof}
Suppose $X_0\neq\RFPM$.
By Lemma \ref{lem.ferte}, for $x = [g] \in \RFPM$ such that $g^+ \in \La$ is a conical limit point of $\Ga$, the orbit $xN \subset \RFPM$ is dense. Therefore, for any $x = [g] \in X_0$, we have that $g^+ \in \La$ is a parabolic limit point of $\Ga$. This implies that there are finitely many $z_1, \cdots, z_k \in \RFPM$ such that
$$X_0 \subset \bigcup_{i = 1}^k z_i NMA.$$

Suppose to the contrary that $X_0 \not\subset F_{n}(N)$ for all $n \ge 1$. Then there exists a sequence $x_n \in X_0 - F_n(N)$ for all $n \ge 1$. For each $n \ge 1$, write
$$
x_n = z_{i_n} p_n m_n a_{-t_n} \in z_{i_n} NMA.
$$
After passing to a subsequence, we may assume that $z_{i_n} = z$ is constant.
Since $\V(x_n) = e^{2 t_n} \V(z p_n m_n) = e^{2t_n} \V(z)$ by \eqref{eq.scale}, we must have $t_n \to + \infty$ as $n \to \infty$. We then have that $X_0$ contains $x_n N = z N m_n a_{-t_n}$ for all $n \ge 1$. Therefore, $\RFPM \subset X_0$ by Lemma \ref{lem.eqd}, which is a contradiction. This proves the lemma.
\end{proof}

\subsection*{$U$-orbits in expanding $N$-orbits}

We now consider a sequence of $U$-orbits contained in an expanding sequence of $N$-orbits. We will show that we can find a $v AU v^{-1}$-orbit for some $v \in V$ in the set of accummulation points of such $U$-orbits (Proposition \ref{prop.sigma2}). We first prove the following lemma:

\begin{lemma}\label{lem.xt}

    Let $x_n \in \RFM \cdot U$ be a sequence such that for any subsequence $\{ x_{n_j} : x_{n_j}N \text{ is compact} \}$, we have $\V(x_{n_j}) \to \infty$.
    Then for any $\eta>0$, there exists a neighborhood $O_{\eta}(N) \subset \Ga \ba G$ of $F_{\eta}(N)$ such that $\RFM - O_{\eta}(N)$ is compact and 
    $$
    \left( \limsup_{n \to \infty}  x_n U \right) \cap \RFM - O_{\eta}(N) \neq\emptyset.
    $$
    In particular, $\left( \limsup_{n \to \infty}  x_n U \right) \cap \RFM - F_{\eta}(N) \neq\emptyset$.
\end{lemma}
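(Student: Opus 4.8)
The plan is to reduce to $x_n\in\RFM$, then use the Benoist--Oh cusp recurrence (Proposition \ref{prop.thickness}(3)) to drive the orbits $x_nU$ into the fixed compact set $W_R$, extract there a convergent subsequence, and finally check that the resulting limit point of $\limsup_{n\to\infty}x_nU$ lies outside $F_\eta(N)$; the volume hypothesis is precisely what makes this last check succeed. First I would normalize: since each $x_n$ lies in $\RFM\cdot U$, write $x_n=y_nu_n'$ with $y_n\in\RFM$ and $u_n'\in U$; then $x_nU=y_nU$, and as $u_n'\in U\subset N$ we have $x_nN=y_nN$ and $\V(x_n)=\V(y_n)$ by \eqref{eq.scale}, so replacing $x_n$ by $y_n$ alters neither the hypothesis nor the conclusion. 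Thus assume $x_n\in\RFM$.

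Next, let $R>0$ be as in Proposition \ref{prop.thickness} and set $\xi_0:=e^{-2R}\xi_{\M}$. The hypothesis says exactly that for every $\xi>0$ only finitely many $n$ satisfy $x_n\in F_\xi(N)$; in particular $x_n\in\RFM-F_{\xi_0}(N)$ for all large $n$, so Proposition \ref{prop.thickness}(3) makes $\T_{W_R}(x_n)$ (which is $4k_{\M}$-thick at $\infty$) nonempty, and I choose $u_n\in U$ with $x_nu_n\in W_R$. Since $W_R$ is compact, after passing to a subsequence $x_nu_n\to y$ for some $y\in W_R\subset\RFM$; as $\limsup_{n\to\infty}x_nU$ is closed and $U$-invariant by \eqref{eqn.deflimsup}, this gives $y\in\limsup_{n\to\infty}x_nU$.

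The heart of the argument is to show $y$, or some other point of $W_R\cap\limsup_{n\to\infty}x_nU$, is not in $F_\eta(N)$. I would pass to a further subsequence along which either (a) every $x_nN$ is compact, or (b) every $x_nN$ is noncompact. In case (a) the hypothesis forces $\V(x_n)\to\infty$, hence $\V(x_nu_n)\to\infty$; since the covolume function is continuous on the locus of frames with compact $N$-orbit, $yN$ must be noncompact (otherwise $\V(y)=\lim_n\V(x_nu_n)<\infty$), so $y\notin F_\eta(N)$. In case (b) every $x_n^+$ is conical (Lemma \ref{lem.onlyrank2}, Lemma \ref{lem.ferte}); if $\eta<\xi_0$ then $y\notin F_\eta(N)$ is automatic, because a compact horosphere of volume $\le\eta<\xi_0$ sits strictly deeper than depth $R$ in its cusp, i.e.\ in $\Horo_R=\inte F_{\xi_0}$ by \eqref{eq.horo}, whereas $y\in W_R=\RFM-\Horo_R$; for larger $\eta$ in case (b) I would use that $x_nN$ is dense in $\RFPM$ (Lemma \ref{lem.ferte}(1)) together with the $U$-invariance of $\limsup_{n\to\infty}x_nU$ to locate inside $\limsup_{n\to\infty}x_nU$ a frame with noncompact $N$-orbit, then recur it into $W_R$ by Proposition \ref{prop.thickness}(3) and note that such a frame lies in no $F_\xi(N)$.

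Finally, given a point $y'\in W_R\cap\limsup_{n\to\infty}x_nU$ with $y'\notin F_\eta(N)$, set $\delta:=\operatorname{dist}(y',F_\eta(N))>0$ (valid since $F_\eta(N)$ is closed) and take $O_\eta(N):=\{z:\operatorname{dist}(z,F_\eta(N))<\delta\}\cup\Horo_R$, an open neighborhood of $F_\eta(N)$ whose complement in $\RFM$ is a closed subset of the compact set $W_R$ and hence compact, and which omits $y'$. This yields the asserted nonempty intersection, and the final ``in particular'' follows from $F_\eta(N)\subset O_\eta(N)$. I expect the genuine obstacle to be case (b) for large $\eta$: the recurrence returns orbits only to $W_R$ for an $R$ fixed by $\M$, while $F_\eta(N)$ can reach depth $\tfrac12\log(\xi_{\M}/\eta)\le R$ in the cusps, so one cannot merely read off $y\notin F_\eta(N)$ from $y\in W_R$ and must instead exploit the size of $\limsup_{n\to\infty}x_nU$; by contrast, the volume hypothesis disposes of the compact-$N$-orbit case cleanly.
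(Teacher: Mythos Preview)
Your reduction to $x_n\in\RFM$ and your handling of case (a) are correct, as is case (b) when $\eta<e^{-2R}\xi_{\M}$. But your proposed fix for case (b) with large $\eta$ does not work, and this is a genuine gap. The density of $x_nN$ in $\RFPM$ gives no information about $\limsup_{n\to\infty}x_nU$: every point of $x_nU$ lies on the noncompact orbit $x_nN$, yet the limit $y=\lim_n x_nu_n\in W_R$ can perfectly well have $yN$ compact with $\V(y)\le\eta$, and once that happens the $U$-invariance of $\limsup_{n\to\infty}x_nU$ yields only $\overline{yU}\subset yN$, a set consisting entirely of frames with compact $N$-orbit. Nothing in your argument produces a frame in $\limsup_{n\to\infty}x_nU$ with noncompact $N$-orbit, so the recursion you sketch cannot begin.

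The paper circumvents the case-split entirely via a geodesic-flow rescaling. Given $\eta>0$, choose $s>0$ with $e^{-2s}\eta<\tfrac12\xi_1$, where $\xi_1:=e^{-2R}\xi_{\M}$. Some $\tilde x_n\in x_nU\cap\RFM$ satisfies $\tilde x_n\notin F_{e^{2s}\xi_1}(N)$ for large $n$ (trivially if $x_nN$ is noncompact, and by the volume hypothesis otherwise); equivalently $\tilde x_na_s\in\RFM-F_{\xi_1}(N)$, so Proposition~\ref{prop.thickness}(3) applied at $\tilde x_na_s$ gives $x_nUa_s\cap W_R\neq\emptyset$, i.e.\ $x_nU\cap W_Ra_s^{-1}\neq\emptyset$. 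Now $O_\eta(N):=\inte(F_{\xi_1})a_s^{-1}$ is a neighborhood of $F_\eta(N)$ by the choice of $s$, and $\RFM-O_\eta(N)=W_Ra_s^{-1}$ is compact and meets every $x_nU$. The scaling by $a_s$ is precisely what converts the fixed recurrence depth $R$ into one adapted to an arbitrary $\eta$; your direct approach is stuck at depth $R$.
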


\begin{proof}
    Let $\xi_{\M}, R>0$ be as in \eqref{eqn.defxi0} and Proposition \ref{prop.thickness}, respectively, and $\xi_1:=e^{-2R}\xi_{\M}$.
    Given any $\eta>0$, choose $s>0$ satisfying $e^{-2s}\eta<0.5\xi_1$.

    \medskip
    
    We claim that $x_nUa_s \cap \RFM-F_{\xi_1}(N) \neq \emptyset$ for all sufficiently large $n \ge 1$.
    Note that by \eqref{eq.scale},
    $$
    x_n Ua_s\cap\RFM-F_{\xi_1}(N)=(x_n U\cap\RFM-F_{e^{2s}\xi_1}(N))a_s,
    $$
    and it suffices to check that $x_n U\cap\RFM-F_{e^{2s}\xi_1}(N)\neq\emptyset$.
    Since $x_n \in\RFM\cdot U$, there exists $\tilde x_n \in x_n U\cap\RFM$ for each $n \ge 1$.
    If $x_n N$ is not compact, then neither is $\tilde x_nN$, and hence $\tilde x_n \not\in F_{e^{2s}\xi_1}(N)$ trivially.
    Hence it suffices to consider the case that $x_n N$ is compact for infinitely many $n \ge 1$. If $x_n N$ is compact, then $\tilde x_n N$ is compact as well and $\V(\tilde x_n)=\V(x_n)$, which diverges as $n \to \infty$ by the hypothesis.
    Hence $\tilde x_n \not\in F_{e^{2s}\xi_1}(N)$ for all sufficiently large $n \ge 1$.
    This proves the claim.

    \medskip

    Note that $W_R = \RFM - \inte(F_{\xi_1})$ using notations in Proposition \ref{prop.thickness}.
    It then follows from  the above claim and Proposition \ref{prop.thickness}(3) that for $k_{M} > 1$ given in Proposition \ref{prop.tt} and all sufficiently large $n \ge 1$, there exists $y_n \in x_n U$ such that $\mathsf{T}_{W_R}(y_na_s)$ is $4k_{\M}$-thick at $\infty$. In particular,
    $$x_n Ua_s \cap W_R =y_nUa_s \cap W_R =y_na_sU \cap W_R \neq \emptyset.$$

    Denote by $O_\eta(N):=\op{int}(F_{\xi_1})a_{s}^{-1}$. 
    From the definition of $W_R$ and $\RFM=\RFM\cdot a_s$, it follows that
    \begin{equation}\label{eq.oeta}
    x_n U\cap\RFM -O_\eta(N)\neq\emptyset
    \end{equation}
    for all large enough $n \ge 1$.
    Note that $O_\eta(N)$ is an open neighborhood of $F_\eta(N)$ because
    $$
    \op{int}(F_{\xi_1})a_{s}^{-1}\supset F_{0.5\xi_1}a_{s}^{-1}\supset F_{0.5\xi_1}(N)a_{s}^{-1}=F_{0.5e^{2s}\xi_1}(N)\supset F_{\eta}(N)
    $$
    by \eqref{eq.scale}.
    Since $\RFM-O_\eta(N) = (\RFM - \inte (F_{\xi_1}))a_s^{-1}$ is compact, the lemma follows from \eqref{eq.oeta}.
\end{proof}

We now prove the existence of a $vAUv^{-1}$-orbit for some $v \in V$ mentioned above. Recall that for $z \in \C - \{0\}$ and $s \in \R$,
$$d(z) = \begin{pmatrix}
z^{-1} & 0 \\
0 & z
\end{pmatrix} \in AM \quad \mbox{and} \quad
v(s) = 
\begin{pmatrix}
  1 & is \\
  0 & 1
\end{pmatrix} \in V.$$

\begin{proposition}\label{prop.sigma2}

    Let $y \in \RFPM$.
    Let $\sigma, \nu \in \R[t]$ be polynomials and set $\Phi(t) = d(\sigma(t)) v(\nu(t))$ for $t \in \R$ with $\sigma(t) \neq 0$.
    Suppose that $\sigma$ is non-constant and $y\Phi(t)\in\RFM\cdot U$ for all sufficiently large $t > 0$. Then there exists $v \in V$ such that
    $$
    \limsup_{t \to \infty } yU\Phi(t) \text{ contains a } v AUv^{-1}\text{-orbit.}
    $$
\end{proposition}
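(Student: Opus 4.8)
The plan is to combine Lemma \ref{lem.xt}, the polynomial drift of Proposition \ref{prop.blowup2} (packaged as Corollary \ref{cor.blowup2}), and Lemma \ref{lem.1psg}, using the recurrence result Proposition \ref{prop.thickness}. First I would verify the hypothesis of Lemma \ref{lem.xt} for the sequence $x_n := y\Phi(t_n)$ for any $t_n \to \infty$: whenever $x_{n_j}N$ is compact, formula \eqref{eq.scale} gives $\V(x_{n_j}) = \V(y\Phi(t_{n_j})) = |\sigma(t_{n_j})|^{-2}\V(y)$ up to the $M$-twist, wait — more precisely $\Phi(t) = d(\sigma(t))v(\nu(t))$ with $d(\sigma(t)) = a_{-2\log|\sigma(t)|}a_{-2i\arg\sigma(t)}$, so $\V(y\Phi(t)) = e^{4\log|\sigma(t)|}\V(y) = |\sigma(t)|^4\V(y)$... \emph{no}: by \eqref{eq.scale}, $\V(xa_tn) = e^{-2t}\V(x)$, and $d(\sigma(t))$ corresponds to $a_{-2\log|\sigma(t)|}$, so $\V(y\Phi(t)) = e^{-2(-2\log|\sigma(t)|)}\V(y) = |\sigma(t)|^4 \V(y) \to \infty$ since $\sigma$ is non-constant. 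So the hypothesis holds. (The point is simply that $\sigma$ non-constant forces the volume to blow up.)

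Next, fix $\eta > 0$ (any value will do, say $\eta = 1$). By Lemma \ref{lem.xt}, for the sequence $x_n = y\Phi(t_n)$ there is a point in $\left(\limsup_{n\to\infty} x_n U\right) \cap \left(\RFM - F_\eta(N)\right)$; applying this along a sequence $t_n \to \infty$ and passing to the topological limsup over all $t$, one obtains that $\limsup_{t\to\infty} yU\Phi(t)$ meets the compact set $\RFM - O_\eta(N)$, hence is non-empty and in fact contains a point $x_0 \in \RFM - F_\eta(N)$ whose $N$-orbit, if compact, has volume bounded below. Then I would invoke Corollary \ref{cor.blowup2} with $Y = yU$: there is a non-trivial one-parameter subgroup $L < AV$ such that $X_0 := \limsup_{t\to\infty} yU\Phi(t)$ is $L$-invariant. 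By Lemma \ref{lem.1psg}, either $L = V$ or $L = vAv^{-1}$ for some $v \in V$.

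In the case $L = vAv^{-1}$ we are essentially done: $X_0$ is a closed set, invariant under $U$ (as a limsup of $U$-orbits it is $U$-invariant — more carefully, $X_0$ is invariant under $U$ since each $yU\Phi(t)$ is a $U$-orbit on the left and limsup preserves this), and invariant under $vAv^{-1}$, so it contains $x_0 \cdot \langle U, vAv^{-1}\rangle \supset x_0 vAUv^{-1}$ — wait, I must check $\langle U, vAv^{-1}\rangle \supseteq vAUv^{-1}$; since $U$ and $vAv^{-1}$ together generate $vAUv^{-1}$ (conjugating $AU \subset H$ by $v$, and $U = vUv^{-1}$ because $U$ and $V$ commute), this holds. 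So $X_0$ contains a $vAUv^{-1}$-orbit, namely $x_0 vAUv^{-1}$, wait we need the orbit of a point \emph{in} $X_0$, and $x_0 \in X_0$, done. The remaining case is $L = V$: here $X_0$ is $UV = N$-invariant, so $X_0 \cap \RFPM$ is a closed $N$-invariant set. Since $x_0 \notin F_\eta(N)$ and $x_0$ lies in a compact piece of $\RFM$, its $N$-orbit has volume $\geq \eta$ if compact, so by Proposition \ref{prop.classifyhoro} (noting $X_0 \cap \RFPM$ cannot be contained in any $F_{\eta'}(N)$ if we run the argument with $\eta' < \eta$, or more directly: $X_0$ contains $x_0 N$ which is not contained in $F_\eta(N)$), we get $X_0 \supset \RFPM$, hence $X_0$ certainly contains a $vAUv^{-1}$-orbit (take $v = e$ and any point with a closed $H$-orbit, or simply note $\RFPM$ contains plenty of such orbits). \textbf{The main obstacle} is the bookkeeping in the first step: passing from the per-sequence conclusion of Lemma \ref{lem.xt} (which produces, for each $\eta$ and each fixed sequence $x_n$, an accumulation point outside $F_\eta(N)$) to a single point of $\limsup_{t\to\infty} yU\Phi(t)$ lying outside some fixed $F_\eta(N)$, and ensuring the chosen $\eta$ can be taken uniform so that Proposition \ref{prop.classifyhoro} applies cleanly in the $L = V$ case — this requires care with the order of quantifiers and a diagonal/compactness argument over $t \to \infty$.
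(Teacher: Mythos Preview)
Your approach is the paper's own: verify the volume blowup via \eqref{eq.scale}, feed $y\Phi(t_n)$ into Lemma~\ref{lem.xt}, then use Corollary~\ref{cor.blowup2} and Lemma~\ref{lem.1psg} to split into the $L=vAv^{-1}$ and $L=V$ cases, handling the latter with Proposition~\ref{prop.classifyhoro}. The only point where you hesitate is not a real obstacle.

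The fix for your ``main obstacle'' is simply to prove, for \emph{every} $\eta>0$, that $X_0 \cap (\RFM - F_\eta(N)) \neq \emptyset$; no diagonal or compactness argument is needed. Fix one sequence $t_n\to\infty$ once and for all. Since $\Phi(t_n)\in AV\subset \op{N}_G(U)$ you have $y\Phi(t_n)U=yU\Phi(t_n)$, so $\limsup_{n\to\infty} y\Phi(t_n)U \subset X_0$. Now apply Lemma~\ref{lem.xt} to this \emph{same} sequence with each $\eta>0$ in turn; the lemma already delivers a point of the limsup outside $F_\eta(N)$, and that point lies in $X_0$. With this universal-$\eta$ statement in hand, the $L=V$ case is immediate: Proposition~\ref{prop.classifyhoro} says either $X_0=\RFPM$ or $X_0\subset F_\eta(N)$ for \emph{some} $\eta>0$, and the second alternative is now excluded for every $\eta$. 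Fixing a single $\eta=1$ at the outset was the misstep; quantifying over all $\eta$ costs nothing extra.
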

\begin{proof}
    Let $X_0:=\limsup_{t \to \infty} yU\Phi(t)$. We first claim that for any $\eta > 0$,
    \be \label{eqn.applylem51}
    X_0 \cap \RFM - F_{\eta}(N) \neq \emptyset.
    \ee
    Fix a sequence $t_n \to \infty$ and for each $n \ge 1$, let $x_n := y \Phi(t_n) \in \RFM \cdot U$.
    If $x_{n_j}$ is a subsequence such that $x_{n_j}N$ is compact for all $n \ge 1$, then $yN$ is compact as well, and moreover it follows from \eqref{eq.scale} that 
    $$
    \V(x_{n_j}) = \V(y d(\sigma(t_{n_j}))) = |\sigma(t_{n_j})|^4 \cdot \V(y).
    $$
    Since $\sigma$ is non-constant and $\lim_{j \to \infty} t_{n_j} = \infty$, we have $|\sigma(t_{n_j})| \to \infty$, from which we deduce $\V(x_{n_j}) \to \infty$ as $j \to \infty$. Therefore, the sequence $x_n \in \RFM \cdot U$ satisfies the condition of  Lemma \ref{lem.xt}, and hence for any $\eta > 0$,
    $$\left( \limsup_{n \to \infty} x_n U \right) \cap \RFM - F_{\eta}(N) \neq \emptyset.$$
    Since $\Phi(t_n) \in AV < \op{N}_G(U)$,
    $$
    \limsup_{n \to \infty} y U \Phi(t_n) = \limsup_{n \to \infty} y \Phi(t_n) U =\limsup_{n \to \infty} x_n U,
    $$
    and therefore
    $$
    X_0 \cap \RFM - F_{\eta}(N) \supset \left( \limsup_{n \to \infty} y U \Phi(t_n) \right) \cap \RFM - F_{\eta}(N) \neq \emptyset.
    $$
    This shows the claim.

        \medskip

    Now by Corollary \ref{cor.blowup2}, $X_0$ is invariant under a one-parameter subgroup $L<AV$, and it follows from Lemma \ref{lem.1psg} that either $L = V$ or  $L = vAv^{-1}$ for some $v \in V$.
    Moreover, since $\Phi(t) \in AV < \op{N}_G(U)$, $X_0$ is $U$-invariant. Together with the commutativity of $U$ and $V$, we now have that
    $$
    X_0 \text{ is invariant under } N = UV \text{ or } vAUv^{-1}.
    $$

    Suppose first that $X_0$ is $N$-invariant. By  Proposition \ref{prop.classifyhoro}, we have $X_0 = \RFPM$ or $X_0 \subset F_{\eta}(N)$ for some $\eta  > 0$. 
    The latter is forbidden by the claim \eqref{eqn.applylem51}, and hence 
    $$X_0 = \RFPM$$
    in this case. Then $X_0$ is a non-empty $AU$-invariant set, and therefore $X_0$ contains an $AU$-orbit
    as desired.

    Now suppose that $X_0$ is invariant under $v AU v^{-1}$ for some $v \in V$. By \eqref{eqn.applylem51}, we in particular have that $X_0$ is non-empty. Therefore, there exists a $vAUv^{-1}$-orbit in $X_0$. This completes the proof.
\end{proof}


\section{A closed $H$-orbit and $U$-orbits therein} \label{sec.geodesicplane}

Let $\Ga < G$ be a geometrically finite Kleinian group with a round Sierpi\'nski limit set and $\M = \Ga \ba \H^3$.
We will also employ geometry and dynamics appearing in geodesic planes in $\M = \Ga \ba \H^3$, or $H$-orbits in $\FM = \Ga \ba G$. In this section, we discuss properties of a closed $H$-orbit and $U$-orbits therein.
Recall that $H < G$ is a copy of $\PSL_2(\R)$ which is an orientation-preserving stabilizer of $\widehat \R \subset \widehat \C$, the boundary of $\H^2$-copy in $\H^3$ invariant under $H$. The following is the classification of $H$-orbit closures by Benoist-Oh:

\begin{theorem} \cite[Theorem 11.10]{Benoist2022geodesic} \label{thm.plane}
    Let  $y \in\RFM$. Either 
    $$y H \text{ is closed} \quad \text{or} \quad  \ov{y H}=\RFPM\cdot H.$$
\end{theorem}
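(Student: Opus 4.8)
The plan is to run the argument of McMullen--Mohammadi--Oh \cite{McMullen2017geodesic} for closures of geodesic planes, adapted to the presence of cusps (this is \cite[Theorem 11.10]{Benoist2022geodesic}). Assume $yH$ is not closed and set $X:=\ov{yH}$, a closed $H$-invariant subset of $\RFPM\cdot H$; here the ambient set $\RFPM\cdot H$ is closed, being the set of $[g]$ whose plane boundary circle $g\widehat\R$ meets the compact set $\La$, and it contains $yH$ because $y\in\RFM$. We must prove $X=\RFPM\cdot H$. First I would bring the horocyclic flow $U<H$ into play: by Proposition \ref{prop.tt}, $\T_{\RFM}(x)$ is $k_{\M}$-thick for every $x\in\RFM$, and Proposition \ref{prop.thickness} promotes this to $4k_{\M}$-thick recurrence into the compact sets $W_\rho=\RFM-\Horo_\rho$ --- the non-escape of mass that replaces compactness of $\RFM$ in the convex cocompact case, and which rests on every parabolic point having rank $2$ (Lemma \ref{lem.onlyrank2}). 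A Zorn's-lemma argument then yields a nonempty closed $H$-invariant $Y\subset X$ that is relatively $U$-minimal inside some $W_\rho$, in the sense of \cite{McMullen2017geodesic}.

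The crux is to use the non-closedness of $yH$ to enlarge $X$. Fix $z\in Y\cap W_\rho$. If $zH$ is closed then, since $yH$ accumulates onto $Y$ while $yH\neq X$, frames of $yH$ come arbitrarily close to $z$ while lying off $zH$, and dividing by nearby elements of $zH$ gives a sequence $g_n\to e$ in $G-H$; intersecting the relevant return times with the $4k_{\M}$-thick sets of Proposition \ref{prop.thickness}, one arranges $g_n\notin\op{N}_G(U)$. (If instead no relatively $U$-minimal subset of $X$ carries a closed $H$-orbit, one produces such $g_n$ the same way from the failure of local closedness.) Feeding the $g_n$ into the unipotent-blowup lemmas --- Lemma \ref{lem.recurrence} and Lemma \ref{lem.seqrecurrence} for thick time sets, or Lemma \ref{lem.qr2} with Corollary \ref{cor.blowup2} and Lemma \ref{lem.1psg} when they escape $\op{N}_G(U)$ only polynomially --- shows that $X$ contains, through a point of $\RFM$, an orbit of a nontrivial one-parameter subgroup $L<AV$ not contained in $H$, or a thick sub-piece of $V$; by Lemma \ref{lem.1psg} such an $L$ is $V$ or some $vAv^{-1}$. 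Combining this new direction with the $H$-invariance of $X$ and iterating the blowup, together with the expansion of horospheres (Lemma \ref{lem.eqd}) and the classification of $N$-invariant sets (Proposition \ref{prop.classifyhoro}) used to inflate the resulting $V$-, $N$-, and $vAUv^{-1}$-directions inside $\RFM$, one propagates --- exactly as in \cite{McMullen2017geodesic, Benoist2022geodesic} --- up to $X=\RFPM\cdot H$. The role of the round-Sierpi\'nski hypothesis is that the only ``rigid'' closed $H$-orbits available are the boundary frames $\BFM$ and their $H$-translates, corresponding to the circles $\partial B_i$, and disjointness of the closures $\ov{B_i}$ prevents $yH$ from limiting onto a proper sublamination.

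The step I expect to be the main obstacle is the unipotent blowup in the presence of cusps. Both the sequences $g_n\to e$ and the polynomial paths $d(\sigma(t))v(\nu(t))$ from Lemma \ref{lem.qr2} can carry a $U$-orbit deep into a rank-$2$ horoball before it recurs, so the blowup must be performed only along the $4k_{\M}$-thick-at-$\infty$ recurrence times furnished by Proposition \ref{prop.thickness}, and one must verify --- via the volume scaling \eqref{eq.scale} and an estimate in the spirit of Lemma \ref{lem.xt} --- that the new direction is realized at a genuine point of $\RFM$ rather than lost to the cusp; this is exactly where the convex cocompact argument does not transfer verbatim. By contrast, the algebraic bookkeeping --- that the subgroups $L\in\{V\}\cup\{\,vAv^{-1}:v\neq e\,\}$ produced by the blowup are not contained in, and do not normalize, $H$, so that adjoining $L$ genuinely enlarges the invariance --- is routine.
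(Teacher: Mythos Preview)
The paper does not prove Theorem~\ref{thm.plane}; it is quoted verbatim from \cite[Theorem~11.10]{Benoist2022geodesic} and used as a black box throughout (see the remark immediately following the statement). There is therefore no ``paper's own proof'' to compare your proposal against.

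Your sketch is a reasonable high-level summary of the strategy in \cite{Benoist2022geodesic}, and you correctly identify the main new difficulty over the convex cocompact case \cite{McMullen2017geodesic}: carrying out the unipotent blowup using only the $4k_{\M}$-thick-at-$\infty$ recurrence to compact sets $W_\rho$ rather than compactness of $\RFM$. That said, as a self-contained argument your outline is too loose at several points. The claim that Zorn's lemma yields a closed \emph{$H$-invariant} set that is relatively $U$-minimal is not quite right --- one works with relatively $U$-minimal sets and exploits $H$-invariance of $X$ separately; the dichotomy you set up (``if $zH$ is closed \ldots\ if instead no relatively $U$-minimal subset carries a closed $H$-orbit'') does not match the actual case analysis in \cite{Benoist2022geodesic}, which proceeds via whether approximating elements lie in $HV$ or not; and ``iterating the blowup \ldots\ exactly as in \cite{McMullen2017geodesic, Benoist2022geodesic}'' hides the bulk of the work. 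If your intent is merely to cite the result, drop the sketch; if you intend to supply a proof, these steps need to be made precise.
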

\begin{remark}
We note that Benoist-Oh showed Theorem \ref{thm.plane} in a more general setting that $\M$ is geometrically finite and acylindrical with $\partial \core(\M)$ totally geodesic.
\end{remark}

When $yH$ is closed and $y = [g]$ for $g \in G$, the conjugate $\Ga^g := g^{-1} \Ga g$ is the stabilizer of $y \in \Ga \ba G$ for the right-multiplication, and the orbit map
\be \label{eqn.properembedding}
    \begin{aligned}
        \phi: (H\cap \Ga^g)\ba H&\to \Ga\ba G\\
        (H\cap \Ga^g)h&\mapsto yh
    \end{aligned}
    \ee
    is a proper embedding \cite[Section 4.2]{Oh2013equidistribution}. Via $\phi$, the closed $H$-orbit $xH$ can be identified with the unit tangent bundle $(H \cap \Ga^g) \ba H$ of the hyperbolic surface $(H \cap \Ga^g) \ba \H^2$. In this regard, we recall the following, which is a special case of the work of Dal'bo:

    \begin{lemma} \cite[Proposition B]{Dalbo2000topologie} \label{lem.dalbo}
        Let $\Ga_H < H$ be a non-elementary discrete subgroup with the limit set $\La_{\Ga_H}$. Let $y = [h] \in \Ga_H \ba H$ be such that $h^+ \in \La_{\Ga_H}$.
        \begin{enumerate}
            \item If $h^+$ is a conical limit point of $\Ga_{H}$, then
            $$
            \ov{yU} = \{ z = [\ell] \in \Ga_H \ba H : \ell^+ \in \La_{\Ga_H} \}.
            $$
            \item If $h^+$ is a parabolic limit point of $\Ga_{H}$, then $yU$ is compact.
        \end{enumerate}
        In particular, if $\Ga_{H}$ is geometrically finite, then for any $y = [h] \in \Ga_H \ba H$ with $h^+ \in \La_{\Ga_H}$, either (1) or (2) occurs.
    \end{lemma}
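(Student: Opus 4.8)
The plan is to recognize this lemma as the classification of orbit closures of the \emph{horospherical} subgroup of $H = \PSL_2(\R) = \Isom^+(\H^2)$. In $H$ the one-parameter unipotent group $U$ is precisely the horospherical subgroup contracted by $a_{-t}$, and $\Gamma_H\ba H$ is the unit tangent bundle of the hyperbolic surface $\Gamma_H\ba\H^2$; so the statement is Dal'bo's theorem \cite[Proposition B]{Dalbo2000topologie} applied to the Fuchsian group $\Gamma_H$, whose limit set $\Lambda_{\Gamma_H}$ sits inside $\widehat\R = \partial\H^2$. The ``in particular'' clause then follows from the structure of the limit set of a geometrically finite group recalled in Section \ref{sec.3mnfd}: $\Lambda_{\Gamma_H}$ is the disjoint union of its conical limit points and its (bounded) parabolic limit points, so every $h^+\in\Lambda_{\Gamma_H}$ is covered by case (1) or by case (2). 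The one hypothesis to keep track of is that $\Gamma_H$ is non-elementary, which is assumed.

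For a self-contained treatment I would proceed as follows. After conjugating $\Gamma_H$ we may assume $h^+=\infty$, and I set $\mathcal E := \{[\ell]\in\Gamma_H\ba H : \ell^+\in\Lambda_{\Gamma_H}\}$; since every $u_s$ fixes $\infty$, the set $\mathcal E$ is closed and $U$-invariant (even $AU$-invariant), so $\ov{yU}\subseteq\mathcal E$ in all cases. For (2): if $\infty$ is parabolic, then $\stab_{\Gamma_H}(\infty)$ is a nontrivial discrete subgroup of the upper-triangular Borel of $H$ containing a parabolic, and a standard discreteness argument shows it contains no nontrivial semisimple element, hence equals $\langle u_\la\rangle$ for a primitive parabolic $u_\la$. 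If $\ga(hu_s)=hu_{s'}$ for $\ga\in\Gamma_H$, then $\ga$ fixes $\infty$, so $\ga\in\langle u_\la\rangle$; writing $h=u_c a_r$, a one-line computation gives $u_{k\la}\,h\,u_s = h\,u_{s+k\la e^{-r}}$, so the orbit map $s\mapsto[hu_s]$ descends to a homeomorphism of the circle $\R/(\la e^{-r}\Z)$ onto $yU$, and in particular $yU$ is compact. (This uses only that cusps of hyperbolic surfaces have rank one, which is automatic.)

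The substance is case (1): showing $\mathcal E\subseteq\ov{yU}$ when $\infty$ is conical, and this is where I expect the real difficulty. Given $[\ell]\in\mathcal E$ with $\xi:=\ell^+\in\Lambda_{\Gamma_H}$ and $\eta:=\ell^-\ne\xi$, minimality of the $\Gamma_H$-action on $\Lambda_{\Gamma_H}$ lets me choose $\ga_n\in\Gamma_H$ with $\ga_n h^+\to\xi$, and then, for large $n$, a unique $s_n\in\R$ with $(hu_{s_n})^- = \ga_n^{-1}(\eta)$, so that $\ga_n h u_{s_n}$ is a frame with $-$-endpoint $\eta$ and $+$-endpoint $\ga_n h^+\to\xi$; note $[\ga_n h u_{s_n}] = y u_{s_n}\in yU$. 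These two endpoints determine $\ga_n h u_{s_n}$ only up to the geodesic flow $A$, and the crux is to upgrade the choice of $\ga_n$ (keeping $\ga_n h^+\to\xi$) so that the base point of $\ga_n h u_{s_n}$ also converges to that of $\ell$; then $\ga_n h u_{s_n}\to\ell$ and $[\ell]\in\ov{yU}$. This last step is exactly where conicality of $h^+=\infty$ enters: it supplies a recurrent sequence $\ga_n h a_{t_n}\to q_\infty$ with $t_n\to+\infty$, which controls the ``depth'' of the translated horocycles and absorbs the geodesic-flow ambiguity. Making this precise is the main obstacle; it is carried out in \cite{Dalbo2000topologie}, and I would either reproduce that argument or simply invoke it.
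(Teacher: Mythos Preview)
Your proposal is correct and matches the paper's treatment: the paper does not prove Lemma~\ref{lem.dalbo} at all but simply cites it as \cite[Proposition B]{Dalbo2000topologie}, exactly as you recognize in your first paragraph. Your additional self-contained sketch of cases (1) and (2) goes beyond what the paper provides and is sound; the ``in particular'' clause is indeed immediate from the conical/parabolic dichotomy for geometrically finite groups recalled in Section~\ref{sec.3mnfd}.
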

    Note that this is a surface version of Lemma \ref{lem.ferte}. Although we can try to apply Lemma \ref{lem.dalbo} to $H \cap \Ga^g < H$, the limit set we are interested in is the limit set $\La$ of $\Ga$ or the limit set $g^{-1} \La$ of $\Ga^g$, not the limit set of $H \cap \Ga^g$. We first need to handle this subtlety.

\subsection*{$U$-orbits in a closed $H$-orbit}

We deduce from the work of Oh-Shah \cite{Oh2013equidistribution} that in our setting, the limit set of $H \cap \Ga^g$ is precisely equal to the intersection of the limit set of $\Ga^g$ with the circle $\widehat \R$ stabilized by $H$.

\begin{proposition} \label{prop.noneltgf}
    Let $y = [g] \in \RFM$ for some $g \in G$ be such that $yH$ is closed. Then $H \cap \Ga^g < H$ is a non-elementary geometrically finite subgroup and its limit set is equal to $g^{-1} \La \cap \widehat \R$.
\end{proposition}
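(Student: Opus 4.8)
The plan is to work on the boundary $\widehat \C$ with the circle $C := g^{-1}\widehat{\R}$ bounding the geodesic plane $g^{-1} (Ho)$, so that we are comparing the limit set $\La_0 := \La(H \cap \Ga^g) \subset C$ with $g^{-1}\La \cap \widehat{\R}$; after conjugating by $g$ this is the same as comparing $\La(\Ga^g \cap H)$ with $(g^{-1}\La) \cap \widehat{\R}$. The inclusion $\La_0 \subset (g^{-1}\La)\cap\widehat{\R}$ is automatic since $H\cap\Ga^g \le \Ga^g$. For the reverse inclusion and for geometric finiteness, I would invoke the closed-orbit structure: since $yH$ is closed, by \eqref{eqn.properembedding} the orbit map identifies $yH$ with the unit tangent bundle of the hyperbolic surface $(H\cap\Ga^g)\ba\H^2$, and the key input is the result of Oh--Shah \cite{Oh2013equidistribution} (together with the fact that $\Ga$ is geometrically finite and Zariski dense with round Sierpi\'nski limit set) that a closed $H$-orbit in $\RFM$ carries a geometrically finite structure whose limit set is cut out by the plane. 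Concretely, one shows: first that $H \cap \Ga^g$ is non-elementary — this follows because $yH$ is closed and meets $\RFM$, so $\La_0$ has more than two points (a closed orbit of an elementary subgroup would be a closed horocycle or geodesic, not all of $yH$, contradicting that $yH \cap \RFM$ has nonempty interior in $yH$); then geometric finiteness of $H\cap\Ga^g$ from Oh--Shah's analysis of closed $H$-orbits under the geometric finiteness of $\Ga$.

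For the equality $\La_0 = (g^{-1}\La)\cap\widehat{\R}$, I would argue that a point $\xi \in (g^{-1}\La)\cap\widehat{\R}$ is either conical or bounded parabolic for $\Ga^g$ (as $\Ga^g$ is geometrically finite), and in either case the corresponding geodesic ray in the plane $g^{-1}(Ho)$ stays in the convex core of $(H\cap\Ga^g)\ba\H^2$: for a conical limit point one uses that the ray recurs into $\core(\M)$ and, because $\partial\core(\M)$ is totally geodesic and compact and the plane meets $\RFM$ in a set corresponding to $\La_0$, the recurrence happens inside the surface; for a parabolic fixed point $\xi$ of $\Ga^g$ lying on $\widehat{\R}$, the stabilizer $\stab_{\Ga^g}(\xi)$ has rank $2$ by Lemma \ref{lem.onlyrank2}, and one checks that its intersection with $H$ is a nontrivial parabolic subgroup of $H$ fixing $\xi$ (the rank-$2$ parabolic subgroup preserves the horosphere at $\xi$ and must preserve the plane through $\xi$ that is part of the closed orbit, giving a rank-$1$ parabolic in $H$). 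Hence $\xi \in \La_0$. The reverse containment $\La_0 \subset (g^{-1}\La)\cap\widehat{\R}$ is trivial, so equality holds.

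The main obstacle I expect is the parabolic case: ruling out that a $\Ga^g$-parabolic point on $\widehat{\R}$ becomes a "useless" point with no $(H\cap\Ga^g)$-parabolic fixing it, i.e. showing $\stab_{\Ga^g}(\xi)\cap H \ne \{e\}$ whenever $\xi \in (g^{-1}\La)\cap\widehat{\R}$. Here the round Sierpi\'nski hypothesis is essential: the closed $H$-orbit corresponds to one of the boundary circles $\partial B_i$ of the gasket (this is where $\BFM$ and Lemma \ref{lem.zv} are morally in play), and a parabolic point of $\Ga$ lying on such a circle $\partial B_i$ has its full rank-$2$ stabilizer preserving $\partial B_i$ — because the two parabolic limit points adjacent along the tangent structure, combined with disjointness of the disk closures, force the stabilizer to act on the circle. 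Making this precise, likely by appealing directly to the Oh--Shah description of the geometrically finite structure on closed $H$-orbits rather than re-deriving it, is the technical heart of the argument; the conical case and non-elementariness are comparatively routine consequences of $yH$ being closed and meeting $\RFM$.
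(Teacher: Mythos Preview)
Your overall plan is reasonable, but the parabolic step has a real gap. You assert that the rank-$2$ stabilizer $\stab_{\Ga^g}(\xi)$ ``must preserve the plane through $\xi$,'' and that the closed $H$-orbit ``corresponds to one of the boundary circles $\partial B_i$.'' Neither holds in general: closed $H$-orbits through $\RFM$ are not all in $\BFM$ (there can be totally geodesic surfaces immersed in the interior of $\core(\M)$), and if you conjugate $\xi$ to $\infty$, the rank-$2$ lattice in $N \cong \C$ has no a priori reason to meet $U \cong \R$. Your non-elementariness sketch has a similar flaw: $yH \cap \RFM$ typically does \emph{not} have interior in $yH$, since $g^{-1}\La \cap \widehat\R$ is generically a Cantor set.

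Your parabolic claim is nonetheless true, and salvageable through properness rather than invariance: with $h \in H$ satisfying $h^+ = \xi$, the orbit $yhU$ sits inside the compact torus $yhN$, so by the proper embedding \eqref{eqn.properembedding} the $U$-orbit of $(H\cap\Ga^g)h$ has compact closure in $(H\cap\Ga^g)\ba H$. Once non-elementariness and geometric finiteness of $H\cap\Ga^g$ are in hand, a $U$-orbit toward a point outside $\La_0$ escapes to a funnel end and cannot be relatively compact, so $\xi \in \La_0$ after all.

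The paper bypasses the parabolic case entirely with a density argument. Using only that the closures $\ov{B_i}$ are pairwise disjoint, it shows that every point of $g^{-1}\La \cap \widehat\R$ is accumulated by conical limit points of $\Ga^g$ lying on $\widehat\R$: any short real interval about such a point meets $g^{-1}\La$ in a set with no isolated points (an isolated point would force two distinct disks $B_i$, $B_j$ to share a boundary point), hence an uncountable set, hence one containing conical points of $\Ga^g$ since parabolic points are countable. By \cite[Lemma~4.5]{Oh2013equidistribution} these conical points lie in $\La_0$, and closedness of $\La_0$ gives the equality; the same perfectness observation already yields non-elementariness. Your case-by-case route can be made to work with the properness fix above, but the paper's argument exploits the Sierpi\'nski hypothesis in a more elementary way and never needs to examine parabolic stabilizers at all.
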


\begin{proof}
     It was shown in \cite[Theorem 4.7]{Oh2013equidistribution} that $H \cap \Ga^g$ is a geometrically finite subgroup of $H$.  Moreover, it is clear that the limit set of $H \cap \Ga^g$ is contained in $g^{-1} \La \cap \widehat \R$. Since $y \in \RFM$, $g^{-1} \La \cap \widehat \R$ is perfect; otherwise, $ g^{-1} \La \cap \widehat \R$ has an isolated point $z$. Since $g^{-1}\La \cap \widehat \R \subset \widehat \C$ is contained in  a circle $\widehat{\R}$, this means that there exists an open segment $I \subset \widehat{\R}$ such that $I \cap g^{-1} \La = \{z\}$. Since components of $\widehat \C - g^{-1} \La$ are round open disks with mutually disjoint closures, there exists a component $B \subset \widehat \C - g^{-1} \La$ containing $I - \{z\}$. This implies $g^{-1} \La \cap \widehat \R - \{z\} \subset B$, contradicting to $y \in \RFM$.
     
     In particular, $g^{-1} \La \cap \widehat \R$ is uncountable. Since $g^{-1}\La$ is the union of conical limit points of $\Ga^g$ and countably many parabolic limit points of $\Ga^g$, $g^{-1} \La \cap \widehat \R$ contains infinitely many conical limit points of $\Ga^g$. 
    By \cite[Lemma 4.5]{Oh2013equidistribution}, all conical limit points of $\Ga^g$ in $ g^{-1} \La \cap \widehat \R$ are conical limit points of $H \cap \Ga^g$. Therefore, $H \cap \Ga^g$ is non-elementary.

    Finally, we show that the limit set of $H \cap \Ga^g$ is equal to $g^{-1} \La \cap \widehat \R$. Without loss of generality, we may assume that $0 \in g^{-1} \La \cap \widehat \R$ and it suffices to show that $0$ is in the limit set of $H \cap \Ga^g$. There are two cases:

    \begin{itemize}
        \item

    Suppose first that there are sequences $\varepsilon_n', \varepsilon_n > 0$ such that $\varepsilon_n, \varepsilon_n' \to 0$ as $n \to \infty$ and $\varepsilon_n, -\varepsilon_n' \notin g^{-1} \La$ for all $n \ge 1$. Then $g^{-1} \La \cap (-\varepsilon_n', \varepsilon_n)$ is compact. 

    \medskip
    
    We claim that $g^{-1} \La \cap (-\varepsilon_n', \varepsilon_n)$ has no isolated point for all $n \ge 1$. Suppose to the contrary that $z \in g^{-1} \La \cap (-\varepsilon_n', \varepsilon_n)$ is isolated. Then for some $\delta > 0$, open segments $(z-\delta, z)$ and $(z, z + \delta)$ in  $(-\varepsilon_n', \varepsilon_n)$ are disjoint from $g^{-1}\La$. Since $z \in g^{-1} \La$ and $g^{-1} \La \cap \widehat \R$ is uncountable, this implies that there are two distinct components $B_1, B_2$ of $\widehat \C - g^{-1}\La$ such that $(z -\delta, z) \subset B_1$ and $(z, z + \delta) \subset B_2$. On the other hand, $z \in \ov{B_1} \cap \ov{B_2}$, which is a contradiction to the hypothesis that $\La$, and hence $g^{-1} \La$, is a round Sierpi\'nski limit set.

\medskip

    By the above claim, $g^{-1} \La \cap (-\varepsilon_n', \varepsilon_n)$ is perfect for all $n \ge 1$. In particular, $g^{-1} \La \cap (-\varepsilon_n', \varepsilon_n)$ is uncountable, and hence contains a conical limit point of $\Ga^g$ for all $n \ge 1$. Since $\varepsilon_n, \varepsilon_n' \to 0$ as $n \to \infty$, this implies that there is a sequence of conical limit points of $\Ga^g$ in $g^{-1} \La \cap \widehat \R$ that converges to $0$. Since every conical limit point of $\Ga^g$ is a conical limit point of $H \cap \Ga^g$ \cite[Lemma 4.5]{Oh2013equidistribution},  it follows that $0$ is a limit point of $H \cap \Ga^g$, as desired.

\medskip

    \item Otherwise, there exists $\delta > 0$ such that at least one of the segments $[-\delta, 0]$ or $[0, \delta]$ is contained in $g^{-1} \La \cap \widehat \R$. This implies that there is a sequence of conical limit points of $\Ga^g$ in $g^{-1} \La \cap \widehat \R$ that converges to $0$. As in the previous case, it follows that $0$ is a limit point of $H \cap \Ga^g$.

\end{itemize}
    In any case, $0$ is a limit point of $H \cap \Ga^g$, finishing the proof.
\end{proof}

We are now able to apply Lemma \ref{lem.dalbo} to $H \cap \Ga^g$ and obtain the following:

\begin{corollary} \label{cor.uinsurface}
    Let $y \in \RFM$ be such that $yH$ is closed. Let $z = [g_z] \in yH \cap \RFPM$ for $g_z \in G$.
    \begin{itemize}
        \item If $g_z^+$ is a conical limit point of $\Ga$, then
        $$
        \ov{zU} = yH \cap \RFPM.
        $$
        \item Otherwise, $zU$ is compact.
    \end{itemize}
    In particular, for any $y \in \BFM$, we have $\ov{yU} = yH$.
\end{corollary}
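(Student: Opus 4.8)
The plan is to transfer everything to the unit tangent bundle of the hyperbolic surface $(H\cap\Ga^g)\ba\H^2$ attached to the closed orbit $yH$, where $y=[g]$, and then quote Dal'bo's classification (Lemma \ref{lem.dalbo}). First I would set $\Ga_H:=H\cap\Ga^g$ and invoke Proposition \ref{prop.noneltgf}: $\Ga_H$ is non-elementary and geometrically finite with limit set exactly $g^{-1}\La\cap\widehat\R$. Since $yH$ is closed, the orbit map \eqref{eqn.properembedding} $\phi\colon\Ga_H\ba H\to\Ga\ba G$, $\Ga_H h\mapsto[gh]$, is a proper embedding; it is $U$-equivariant because $U<H$, and being a homeomorphism onto the closed set $yH$ it carries $U$-orbits to $U$-orbits and closures to closures. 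Writing $z=yh=[gh]$ with $h\in H$ and taking $g_z=gh$, this reduces the computation of $\ov{zU}$ to that of $\ov{[h]U}$ inside $\Ga_H\ba H$, where $[h]\in\Ga_H\ba H$.

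The second step is the bookkeeping of ``conical'' across the three groups $\Ga$, $\Ga^g$, $\Ga_H$. Conjugation by $g$ identifies the $\Ga$-action on $\H^3$ with the $\Ga^g$-action and sends $g_z^+=g(h^+)$ to $h^+$, so $g_z^+$ is conical for $\Ga$ iff $h^+$ is conical for $\Ga^g$. A conical point of $\Ga_H$ is trivially conical for the larger group $\Ga^g$; for the converse I would use that $z\in\RFPM$ forces $h^+\in g^{-1}\La\cap\widehat\R=\La_{\Ga_H}$ and then apply \cite[Lemma 4.5]{Oh2013equidistribution} (as in the proof of Proposition \ref{prop.noneltgf}) to see that a conical point of $\Ga^g$ lying on $\widehat\R$ is conical for $\Ga_H$. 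Since $\Ga$ and $\Ga_H$ are geometrically finite, the dichotomy ``conical vs.\ bounded parabolic'' on their limit sets then gives: $g_z^+$ conical for $\Ga$ $\iff$ $h^+$ conical for $\Ga_H$, and otherwise $g_z^+$ is parabolic for $\Ga$ and $h^+$ is parabolic for $\Ga_H$.

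With this dictionary, apply Lemma \ref{lem.dalbo} to $\Ga_H$ and $[h]$. In the conical case it gives $\ov{[h]U}=\{[\ell]\in\Ga_H\ba H:\ell^+\in\La_{\Ga_H}\}$; pushing forward by $\phi$ and noting that $\phi([\ell])=[g\ell]\in\RFPM$ exactly when $\ell^+\in g^{-1}\La$, i.e.\ (as $\ell^+\in\widehat\R$) when $\ell^+\in\La_{\Ga_H}$, identifies the image with $yH\cap\RFPM$, so $\ov{zU}=yH\cap\RFPM$. In the parabolic case $[h]U$ is compact, hence $zU=\phi([h]U)$ is compact. For the last assertion, if $y\in\BFM$ then $yH$ is one of the compact orbits $z_iH$ of \eqref{eqn.bfmdef}, so $\Ga_H$ is a cocompact lattice in $\PSL_2(\R)$; then $\La_{\Ga_H}=\widehat\R$, there are no parabolics and every point of $\widehat\R$ is conical, and $\widehat\R\subset g^{-1}\La$ forces $yH\subset\RFPM$. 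Applying the conical case with $z=y$ then yields $\ov{yU}=yH\cap\RFPM=yH$.

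The main obstacle is the second step: the only content beyond routine topology is making Dal'bo's surface-level statement, phrased with the limit set of $\Ga_H$, say something about $\La$ itself, which hinges on the identity $\La_{\Ga_H}=g^{-1}\La\cap\widehat\R$ and on matching conical limit points of $\Ga^g$ on $\widehat\R$ with those of $\Ga_H$. Both are essentially handed to us by Proposition \ref{prop.noneltgf} and the Oh--Shah lemma it uses, so once those are cited the remaining work — equivariance and properness of $\phi$, and rewriting ``$[\ell]\in\RFPM$'' as a condition on $\ell^+$ — is bookkeeping.
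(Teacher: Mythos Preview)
Your proposal is correct and follows essentially the same approach as the paper: both reduce to Dal'bo's surface classification via the proper embedding $\phi$ of \eqref{eqn.properembedding}, using Proposition \ref{prop.noneltgf} and \cite[Lemma 4.5]{Oh2013equidistribution} to match conical limit points of $\Ga$ on the circle with those of $\Ga_H$. The only organizational difference is that the paper first treats the case $z=y$ (applying Lemma \ref{lem.dalbo} to $[e]\in\Ga_H\ba H$) and then reduces a general $z\in yH\cap\RFPM$ to this by choosing $u\in U$ with $zu\in\RFM$, whereas you handle arbitrary $z=yh$ directly by working with $[h]$; your route is slightly more streamlined and also spells out the ``in particular'' clause about $\BFM$, which the paper leaves implicit.
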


\begin{proof}
    We first prove the claim for $z = y$. Let $g \in G$ be such that $y = [g]$.
By Proposition \ref{prop.noneltgf}, $H \cap \Ga^g < H$ is a non-elementary geometrically finite subgroup and its limit set is equal to $g^{-1} \La \cap \widehat \R$. Since $e^+ = \infty \in g^{-1} \La \cap \widehat \R$, we can apply Lemma \ref{lem.dalbo} to the identity coset $[e] \in (H \cap \Ga^g) \ba H$. As mentioned in the proof of Proposition \ref{prop.noneltgf}, $e^+$ is a conical limit point of $H \cap \Ga^g$ if and only if $g^+$ is a conical limit point of $\Ga$ by \cite[Lemma 4.5]{Oh2013equidistribution}. Hence, applying Lemma \ref{lem.dalbo}, we obtain the following dichotomy:
\begin{itemize}
    \item if $g^+$ is a conical limit point of $\Ga$, then
    \be \label{eqn.beforephi}
    \ov{[e]U} = \{ z = [\ell] \in (H \cap \Ga^g) \ba H : \ell^+ \in g^{-1} \La \cap \widehat \R \}.
    \ee
    \item otherwise, $[e]U$ is compact.
\end{itemize}
Recall the proper embedding $\phi : (H \cap \Ga^g) \ba H \to \Ga \ba G$ from \eqref{eqn.properembedding}. Since $\phi([e]U) = yU$ and the right hand side of \eqref{eqn.beforephi} has the image $yH \cap \RFPM$ under $\phi$, the claim follows for $y$.

Now let $z \in yH \cap \RFPM$ be arbitrary. Since $yH$ meets $\RFM$, there exists $u \in U$ such that $zu \in \RFM$. We then have that $zuH = yH$ is closed, and hence the above claim applies to $zu$. Since $zuU = zU$, this finishes the proof.
\end{proof}

The following $AU$-minimality is a direct consequence of Proposition \ref{prop.noneltgf}:

\begin{corollary} \label{cor.auminimal}
    Let $y \in \RFM$ be such that $yH$ is closed. For any $z \in yH \cap \RFPM$,
    $$\ov{zAU} = yH \cap \RFPM.$$
\end{corollary}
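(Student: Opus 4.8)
The plan is to transport the statement through the proper embedding $\phi$ of \eqref{eqn.properembedding} to the unit tangent bundle of the hyperbolic surface $(H\cap\Ga^g)\ba\H^2$, where $zAU$ can be described explicitly as a union of geodesic frames pointing toward a single $\Ga_H$-orbit of boundary points, and then to conclude by minimality of the limit-set action.

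First, fix $g\in G$ with $y=[g]$ and set $\Ga_H:=H\cap\Ga^g$, which by Proposition \ref{prop.noneltgf} is a non-elementary (geometrically finite) subgroup of $H\cong\PSL_2(\R)$ with limit set $\La_{\Ga_H}=g^{-1}\La\cap\widehat\R$. As noted in the proof of Corollary \ref{cor.uinsurface}, $\phi$ identifies $yH\cap\RFPM$ with $\{[\ell]\in\Ga_H\ba H:\ell^+\in\La_{\Ga_H}\}$, and since $\RFPM$ is closed and $MAN$-invariant while $AU\subset MAN$, the set $yH\cap\RFPM$ is closed and $AU$-invariant; hence $\ov{zAU}\subseteq yH\cap\RFPM$, and only the reverse inclusion needs proof. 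Write $z=\phi([\ell_0])$ for $\ell_0\in H$ and put $\xi_0:=\ell_0^+\in\La_{\Ga_H}$. Since $AU=\stab_H(\infty)$, one has $\{\ell\in H:\ell^+=\xi_0\}=\ell_0 AU$, and passing to $\Ga_H\ba H$ (using that $\phi$ is right $H$-equivariant) this gives $\phi^{-1}(zAU)=\{[\ell]\in\Ga_H\ba H:\ell^+\in\Ga_H\xi_0\}$.

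It then remains to show that the closure of $\{[\ell]:\ell^+\in\Ga_H\xi_0\}$ equals $\{[\ell]:\ell^+\in\La_{\Ga_H}\}$; applying $\phi$ back, a homeomorphism onto the closed set $yH$, finishes the proof. Because $\Ga_H$ is non-elementary, its action on $\La_{\Ga_H}$ is minimal, so $\ov{\Ga_H\xi_0}=\La_{\Ga_H}$. Given $[\ell]$ with $\ell^+=\eta\in\La_{\Ga_H}$, choose $\gamma_n\in\Ga_H$ with $\gamma_n\xi_0\to\eta$; for large $n$ we have $\gamma_n\xi_0\ne\ell^-$, so the frame on the geodesic with endpoints $\gamma_n\xi_0$ and $\ell^-$ at the appropriate parameter is defined and can be arranged to converge to $\ell$ in $H$, yielding a sequence in $\{[\ell']:(\ell')^+\in\Ga_H\xi_0\}$ converging to $[\ell]$ in $\Ga_H\ba H$. (When $g_z^+$ is a conical limit point one may instead quote Corollary \ref{cor.uinsurface} directly, as it already gives $\ov{zU}=yH\cap\RFPM$.) I expect the only real subtlety to be this last density step: one cannot appeal to continuity of an endpoint map on the quotient, and must instead approximate $[\ell]$ by genuine frames in $H$, which is exactly why the condition $\eta\ne\ell^-$ — automatic since $\ell$ is a frame — is needed to keep the approximating geodesics and frames well defined.
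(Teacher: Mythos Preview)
Your proof is correct and follows essentially the same approach as the paper: both transfer the problem through the proper embedding $\phi$ to $\Ga_H\ba H$ and then invoke minimality of the non-elementary group $\Ga_H$ on its limit set $\La_{\Ga_H}=g^{-1}\La\cap\widehat\R$. The only differences are presentational---the paper first treats $z=y$ and then reduces general $z$ to that case (as in Corollary~\ref{cor.uinsurface}), and it asserts the passage from $\ov{\Ga_H\xi_0}=\La_{\Ga_H}$ to the orbit-closure statement in $\Ga_H\ba H$ directly via the identification $\widehat\R=H/AU$, whereas you handle general $z$ at once and spell out the density step explicitly with the geodesic-frame construction.
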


\begin{proof}
    We first prove the claim for $z = y$. 
Let $g \in G$ be such that $y = [g]$. By Proposition \ref{prop.noneltgf}, $H \cap \Ga^g < H$ is non-elementary and geometrically finite, and its limit set is $g^{-1}\La \cap \widehat \R$. Since $g^+ \in \La$, $e^+ = \infty$ is contained in the limit set of $H \cap \Ga^g$. As $H \cap \Ga^g$ is non-elementary, it acts minimally on its limit set, and hence
$$\ov{(H \cap \Ga^g) e^+} = g^{-1}\La \cap \widehat \R.$$
 Since $\widehat \R = H/AU$, the above identity is equivalent to the following identity in $(H \cap \Ga^g) \ba H$:
 $$
    \ov{[e]AU} = \{ [h] \in (H \cap \Ga^g) \ba H : h^+ \in g^{-1} \La \cap \widehat \R \}.
 $$
 As in the proof of Corollary \ref{cor.uinsurface}, taking $\phi$ implies the claim for $y$.
The claim for general $z \in yH \cap \RFPM$ can be deduced by the same argument as in the proof of Corollary \ref{cor.uinsurface}.
\end{proof}

\subsection*{Expansion of $U$-orbits within a closed $H$-orbit}
In the rest of this section, we discuss expanding behaviors of compact $U$-orbits in a closed $H$-orbit. The following may be standard, and can be shown by arguments in the proof Proposition \ref{prop.classifyhoro}:

\begin{lemma} \label{lem.expansioninplane}
    Let $\Ga_H < H$ be a non-elementary geometrically finite subgroup with the limit set $\La_{\Ga_H}$. Let $y_n \in \Ga_H \ba H$ be a sequence such that $y_nU$ is compact for all $n \ge 1$ and the length of $y_n U$ with respect to the Haar measure of $U$ diverges as $n \to \infty$. Then
    $$
    \limsup_{n \to \infty} y_n U = \{ [h] \in \Ga_H \ba H : h^+ \in \La_{\Ga_H} \}.
    $$
\end{lemma}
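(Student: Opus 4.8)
\emph{Proof proposal.} The plan is to run the argument of Proposition \ref{prop.classifyhoro} inside $H\cong\PSL_2(\R)$, with the horospherical subgroup $U<H$ playing the role that $N$ plays in $G$ (note that the analogue of $M$ is trivial in $H$), with Lemma \ref{lem.dalbo} replacing Lemma \ref{lem.ferte}, and with the $\H^2$-version of Lemma \ref{lem.eqd} replacing Lemma \ref{lem.eqd}. Write $\Phi_H := \{[h]\in\Ga_H\ba H : h^+\in\La_{\Ga_H}\}$, which is a closed, $U$-invariant subset of $\Ga_H\ba H$, and set $X_0 := \limsup_{n\to\infty} y_nU$, a closed $U$-invariant set.

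First I would reduce to the case of a single expanding closed horocycle. Since $y_nU$ is compact and $\Ga_H$ is geometrically finite, $y_n^+$ is fixed by a parabolic element of $\Ga_H$; as $\Ga_H$ has only finitely many cusps, after passing to a subsequence all the $y_n^+$ lie in a single $\Ga_H$-orbit $\Ga_H p$. Fixing a frame $w\in H$ with $w^+=p$ and $[w]U$ compact, one may write $y_n=[w\,a_{-t_n}u_{s_n}]$, whence $y_nU=([w]U)\,a_{-t_n}$. The length of a compact $U$-orbit scales by the $H$-analogue of \eqref{eq.scale}: the length of $(xa_r)U$ equals $e^{-r}$ times the length of $xU$. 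Thus $y_nU$ has length $e^{t_n}$ times the length of $[w]U$, so the hypothesis that these lengths diverge forces $t_n\to+\infty$. Hence
$$\limsup_{n\to\infty} y_nU \;=\; \limsup_{n\to\infty} ([w]U)\,a_{-t_n}, \qquad t_n\to+\infty,$$
and since each $([w]U)a_{-t_n}$ lies in the closed set $\Phi_H$, we get $X_0\subseteq\Phi_H$.

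Then I would show $X_0=\Phi_H$, mirroring Proposition \ref{prop.classifyhoro}. If $X_0$ contains a frame $[h]$ with $h^+$ a conical limit point of $\Ga_H$, then Lemma \ref{lem.dalbo}(1) gives $\ov{[h]U}=\Phi_H$, so $X_0=\Phi_H$ and we are done. Otherwise every frame in $X_0$ has parabolic $+$-endpoint, so $X_0$ is contained in the finitely many cusp neighborhoods of $\Ga_H\ba H$. This is impossible: the closed horocycles $y_nU$, whose lengths tend to infinity, must meet a fixed compact subset $K\subset\Phi_H$ disjoint from all cusp neighborhoods, and extracting a convergent subsequence of points in $y_nU\cap K$ yields a point of $X_0$ lying in $K$, a contradiction.

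\emph{Main obstacle.} Everything except the spreading of the expanding closed horocycles over the compact part of the convex core is routine: the reduction uses only finiteness of the cusps and the scaling law, and the dichotomy is Lemma \ref{lem.dalbo}. The hard part is controlling where the long horocycles $([w]U)a_{-t_n}$ go — ruling out that they slide entirely into the cusps or out the funnel ends — i.e.\ establishing that they meet a fixed compact $K$ as above. This is precisely the equidistribution (Winter-type mixing) statement for the geometrically finite hyperbolic surface $\Ga_H\ba\H^2$, the $\H^2$-analogue of Lemma \ref{lem.eqd}, which applies since a non-elementary discrete subgroup of $\PSL_2(\R)$ is Zariski dense (Winter \cite{Winter2015mixing}); in fact it directly yields $\limsup_n ([w]U)a_{-t_n}=\Phi_H$. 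Granting it, the proof is a transcription of the argument for Proposition \ref{prop.classifyhoro}.
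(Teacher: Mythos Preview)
Your proposal is correct and follows essentially the same route as the paper: reduce to a single cusp by finiteness and the length-scaling law, write $y_nU=[w]U\,a_{-t_n}$ with $t_n\to+\infty$, and invoke Winter's equidistribution for the Zariski dense, geometrically finite $\Ga_H<H$ to conclude as in Proposition~\ref{prop.classifyhoro}. Your intermediate dichotomy via Lemma~\ref{lem.dalbo} is a harmless elaboration---once you grant the $\H^2$-version of Lemma~\ref{lem.eqd}, it already gives $\limsup_n y_nU=\Phi_H$ directly, which is exactly how the paper proceeds.
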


\begin{proof}
    We sketch the argument. Since $\Ga_H$ is geometrically finite, there are finitely many elements $z_1, \cdots, z_k \in \Ga_H \ba H$ such that all compact $U$-orbits are contained in the union $\bigcup_{i = 1}^k z_i UA$. After passing to a subsequence, we may assume that $y_n \in z_1 UA$ for all $n \ge 1$, and hence there exists a sequence $t_n \in \R$ such that $y_n U = z_1 U a_{t_n}$ for all $n \ge 1$. Since the length of $y_n U$ diverges, we must have $t_n \to - \infty$ as $n \to \infty$, by the scaling property of the $U$-Haar measure along the geodesic flow, which is similar to \eqref{eq.scale}. Then the equidistribution result of Winter \cite[Theorem 6.1]{Winter2015mixing} 
    applies and finishes the proof as in Proposition \ref{prop.classifyhoro}.
\end{proof}

We now obtain the following expansion of compact $U$-orbits within a closed $H$-orbit:

\begin{lem}\label{lem.OS}
    Let $y \in \RFM$ be such that $yH$ is closed. Let $\eta_n \in \R$ be a sequence such that $\eta_n \to \infty$ as $n \to \infty$.
    Let $y_n \in yH$ be a sequence such that $y_n U$ is compact and $y_n \notin F_{\eta_n}(N)$ for all $n \ge 1$.
    Then 
    $$
    \limsup_{n \to\infty}y_nU=yH\cap\RFPM.
    $$
\end{lem}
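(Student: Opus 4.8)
The plan is to reduce Lemma \ref{lem.OS} to Lemma \ref{lem.expansioninplane} via the proper embedding $\phi$ of \eqref{eqn.properembedding}, the only subtlety being that $\phi$ identifies the closed orbit $yH$ with the unit tangent bundle of $(H\cap\Ga^g)\ba\H^2$, whereas the hypothesis $y_n\notin F_{\eta_n}(N)$ is phrased in terms of the ambient manifold $\M$, not the surface. So first I would fix $g\in G$ with $y=[g]$ and invoke Proposition \ref{prop.noneltgf}: $\Ga_H:=H\cap\Ga^g$ is non-elementary and geometrically finite with limit set $\La_{\Ga_H}=g^{-1}\La\cap\widehat\R$. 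The target set of Lemma \ref{lem.expansioninplane}, namely $\{[h]\in\Ga_H\ba H: h^+\in\La_{\Ga_H}\}$, is carried by $\phi$ exactly onto $yH\cap\RFPM$ (same computation as in the proof of Corollary \ref{cor.uinsurface}), so it suffices to produce a sequence $\tilde y_n\in\Ga_H\ba H$ with $\phi(\tilde y_n)\in y_nH\,$, $\tilde y_nU$ compact, and the $U$-length of $\tilde y_nU$ tending to $\infty$; then Lemma \ref{lem.expansioninplane} gives $\limsup\tilde y_nU = \{[h]:h^+\in\La_{\Ga_H}\}$, and applying the homeomorphism-onto-its-image $\phi$ (which is proper, hence commutes with $\limsup$ on the closed orbit) yields the claim. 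Since each $y_n$ lies in the closed orbit $yH$, after translating by an element of $U$ (which changes neither $y_nU$ nor its compactness) we may assume $y_n\in\RFM$, so $y_n$ is genuinely in the image of $\phi$; write $\tilde y_n=\phi^{-1}(y_n)$.

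The heart of the matter is the length comparison. I must show: if $y_nU\subset yH$ is compact and $y_n\notin F_{\eta_n}(N)$ with $\eta_n\to\infty$, then the length $\ell(\tilde y_nU)$ of the corresponding compact horocycle in the surface $(H\cap\Ga^g)\ba\H^2$ diverges. A compact $U$-orbit in $\Ga\ba G$ sits inside a compact $N$-orbit (Lemma \ref{lem.ferte}, using Lemma \ref{lem.onlyrank2}: the relevant limit point $g_{y_n}^+$ is parabolic of rank $2$), and $y_n\notin F_{\eta_n}(N)$ means $\V(y_n)>\eta_n\to\infty$, i.e. the compact horosphere $y_nN$ through $y_n$ has large $N$-volume. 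Since $y_nN$ is compact of large volume while $y_nU$ is the $U$-slice of it inside the surface, I want to deduce that $y_nU$ has large $U$-length. The cleanest way is to work upstairs: the compact $U$-orbit corresponds (after translating into $\BFM$-like normal form, or directly via the scaling relation of the $U$-Haar measure along $A$, analogous to \eqref{eq.scale}) to $z_iUa_{t_n}$ for one of finitely many base points $z_i$ with compact $U$-orbit in the surface; the $N$-volume scales like $e^{-2t_n}$ by \eqref{eq.scale} and the $U$-length scales like $e^{-t_n}$, so $\V(y_n)\to\infty$ forces $t_n\to-\infty$, hence $\ell(\tilde y_nU)=\ell(z_iU)e^{-t_n}\to\infty$. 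I expect this bookkeeping — matching the ambient cusp-depth parameter $\V(\cdot)$ to the intrinsic horocycle length in the totally geodesic surface, across the identification $\phi$ — to be the main obstacle, precisely because $F_{\eta_n}(N)$ is defined by the geometry of $\M$ and not of the surface; one has to use that $\partial\core(\M)$ is totally geodesic so that the cusp of $\M$ meeting $yH$ restricts to a cusp of the surface, and that there are only finitely many cusp base-points $z_i$ to pass to a subsequence along.

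Finally, I would assemble the pieces: pass to a subsequence so that the base point $z_i$ is constant, apply the length divergence just established, invoke Lemma \ref{lem.expansioninplane} in $\Ga_H\ba H$, and transport the conclusion through $\phi$ to get
$$
\limsup_{n\to\infty} y_nU = yH\cap\RFPM.
$$
One technical point to handle with care is that $\limsup$ is taken in $\Ga\ba G$, not in the closed subset $yH$; but since $yH$ is closed and $\phi$ is a proper embedding onto it, a convergent sequence $y_nu_n\to w$ with $y_nu_n\in yH$ forces $w\in yH$, so the ambient $\limsup$ coincides with the $\limsup$ inside $yH$, and the identification with $\Ga_H\ba H$ is legitimate. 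This also shows no mass escapes to the rest of $\RFPM$, which is what makes the equality (rather than mere containment) hold.
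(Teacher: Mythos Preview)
Your proposal is correct and follows essentially the same approach as the paper: pull back to $(H\cap\Ga^g)\ba H$ via the proper embedding $\phi$, use Proposition \ref{prop.noneltgf} to identify the limit set, show the $U$-length of the preimages diverges, and invoke Lemma \ref{lem.expansioninplane}. One small remark: the translation step to get $y_n\in\RFM$ is unnecessary, since the image of $\phi$ is all of $yH$ (not just $yH\cap\RFM$), so every $y_n\in yH$ already lies in the image of $\phi$; and the length-divergence step, which you rightly flag as the main bookkeeping point, is handled in the paper by the same scaling argument you outline (finitely many cusp base-points, $\V(y_n)\to\infty$ forces $t_n\to-\infty$).
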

\begin{proof} 
    Let $g \in G$ be such that $y=[g]\in\Ga\ba G$  and let $\Ga^g :=g^{-1}\Ga g$ be its stabilizer.
    By Proposition \ref{prop.noneltgf}, $H\cap\Ga^g$ is a non-elementary geometrically finite subgroup of $H$ and its limit set is $g^{-1} \La \cap \widehat \R$. Recall that the orbit map
    \begin{align*}
        \phi: (H\cap \Ga^g)\ba H&\to \Ga\ba G\\
        (H\cap \Ga^g)h&\mapsto yh
    \end{align*}
    is a proper embedding \cite[Section 4.2]{Oh2013equidistribution}.

    For each $n \ge 1$, let $z_n \in ( H \cap \Ga^g ) \ba H$ be such that $\phi(z_n) = y_n$. Since $y_n U$ is compact and $\phi$ is proper, $z_n U \subset (H \cap \Ga^g) \ba H$ is compact as well. Since $y_n \not\in F_{\eta_n}(N)$ and $\eta_n \to \infty$, the length of $y_n U$ diverges as $n \to \infty$, and hence the length of $z_nU$ does so. We now apply Lemma \ref{lem.expansioninplane} to the sequence $z_n \in (H \cap \Ga^g) \ba H$. Since the limit set of $H \cap \Ga^g < H$ is $g^{-1} \La \cap \widehat \R$, it follows from Lemma \ref{lem.expansioninplane} that
    $$
    \limsup_{n \to \infty} z_n U = \{ [h] \in (H \cap \Ga^g) \ba H : h^+ \in g^{-1} \La \cap \widehat \R \}.
    $$
    Therefore, applying $\phi$ finishes the proof.
\end{proof}


\section{A $U$-orbit closure containing a closed $H$-orbit}\label{sec.xwh}
Let $\Ga < G$ be a geometrically finite Kleinian group with a round Sierpi\'nski limit set and $\M = \Ga \ba \H^3$.
    The goal of this section is to classify a $U$-orbit closure $X=\ov{xU}$ in the case when $X$ contains a closed $H$-orbit meeting $\RFM$.
    
\begin{theorem} \label{thm.xwh}

Let $x \in \RFM$ and $X = \ov{xU}$. Suppose that there exists $y \in \RFM$ such that $yH$ is closed and $yH \cap \RFPM \subset X$. Then either
$$X = yH \cap \RFPM \quad \text{or} \quad X = \RFPM.$$

\end{theorem}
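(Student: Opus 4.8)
Since $X=\overline{xU}$ with $x\in\RFM$, we have $X\subseteq\RFPM$. Put $Z:=yH\cap\RFPM$; by hypothesis $Z\subseteq X$, and $Z$ is closed, while $X\setminus Z=X\setminus yH$ is $U$-invariant. The plan is to assume $X\neq Z$ and deduce $X=\RFPM$. The first step is to produce, inside $X$, a sequence approaching $Z$ transversally to the plane $yH$. Since $\overline{xU}$ is connected and $Z\subsetneq X$ is closed, $X\setminus Z$ cannot be closed in $X$, so there are $w_n\in X\setminus yH$ with $w_n\to w\in Z$. As $yH$ is a properly embedded submanifold (the orbit map \eqref{eqn.properembedding}), we may write $w_n=w_n'g_n$ with $w_n'\in yH$, $w_n'\to w$, $g_n\to e$ in $G$, and $g_n\notin H$; absorbing the $H$-component into $w$ we arrange $w_n=wg_n$ with $g_n\to e$, $g_n\notin H$. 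Replacing $w$ by a nearby point of $Z$ if needed — using that $X\setminus yH$ is closed and $U$-invariant and that $\overline{pU}=Z$ for $p\in Z$ with $p^+$ conical (Corollary \ref{cor.uinsurface}, whose hypotheses hold since the limit set of $H\cap\Ga^g$ is perfect by Proposition \ref{prop.noneltgf}) — we may also assume $w\in\RFM$.

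The second step promotes the $U$-invariance of $X$ by unipotent blowup. Suppose first that $g_n\notin\op{N}_G(U)$ for infinitely many $n$. Passing to a subsequence, Lemma \ref{lem.qr2}(1) produces polynomials $\sigma,\nu\in\R[t]$, not both constant, with $\sigma(0)=1$, $\nu(0)=0$, such that $\Phi(t)=d(\sigma(t))v(\nu(t))\in\limsup_n Ug_nU$ whenever $\sigma(t)\neq0$. Feeding this into $\Ga\ba G$ through the $U$-invariance of $X$ together with the recurrence of the horocycle flow to the non-cuspidal part of $\RFM$ (Proposition \ref{prop.thickness}) — exactly the mechanism behind Proposition \ref{prop.sigma2} — shows that a nonempty $\limsup$ of $X$-translates along $\Phi$ lies in $X$, hence by Corollary \ref{cor.blowup2} and Lemma \ref{lem.1psg} that $X$ is invariant under a nontrivial one-parameter subgroup $L\subseteq AV$. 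If instead $g_n\in\op{N}_G(U)$ for all large $n$, then since $A\subseteq H$ and $\op{N}_G(U)\supseteq AV$ the transversal part of $g_n$ is essentially a $V$-element tending to $e$, so $X$ contains frames $V$-translated off $yH$ by amounts $\to0$; using that $U$ and $V$ commute and that $\overline{wU}=Z$, one obtains $V$-invariance of $X$ directly. In all cases, combining $L$-invariance with $U$-invariance, $X$ is invariant under $N=UV$ (if $L=V$) or under the skew Borel $vAUv^{-1}$ for some $v\in V-\{e\}$ (if $L=vAv^{-1}$), just as in Proposition \ref{prop.sigma2}.

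If $X$ is $N$-invariant, Proposition \ref{prop.classifyhoro} gives $X=\RFPM$ or $X\subseteq F_\eta(N)$ for some $\eta>0$; the latter is impossible because $Z\subseteq X$ contains frames $[g]$ with $g^+$ a conical limit point, whose $N$-orbits are not compact (Lemma \ref{lem.ferte}) and so lie in no $F_\eta(N)$. Hence $X=\RFPM$, as desired. It remains to treat the case where $X$ is invariant under $U$ and under $vAUv^{-1}$ with $v\in V-\{e\}$; this is the main obstacle. Here I would pick $z\in Z$ with $z^+$ conical and (after adjusting the choices) $zv\in\RFM$, so that $\overline{z\,vAUv^{-1}}\subseteq X$, and analyze the conjugate geodesic-plane orbit $(zv)H$. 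By Benoist–Oh (Theorem \ref{thm.plane}) it is closed or has closure $\RFPM\cdot H$. In the closed case, Corollary \ref{cor.auminimal} gives $\overline{(zv)AU}=(zv)H\cap\RFPM$, so $X$ contains $\bigl((zv)H\cap\RFPM\bigr)v^{-1}$, i.e. the $\RFPM$-part of a second totally geodesic plane through $z$ transverse to $yH$; this furnishes transversal perturbations $zvhv^{-1}$ of $z$ inside $X$ with $h\in H\setminus A$, which lie outside $\op{N}_G(U)$, so a second blowup produces a one-parameter subgroup $L'\subseteq AV$ with $L'\neq L$, and $\langle L,L',U\rangle$ then contains $V$, hence $N$, reducing to the case already handled. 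In the dense case $\overline{(zv)H}=\RFPM\cdot H$ one argues similarly, the orbit $(zv)AU$ already being large enough to force $X=\RFPM$.

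I expect the technically heaviest points to be: (i) making the polynomial blowup interact correctly with recurrence which, because of cusps, is only guaranteed on thick time-sets modulo deep horoballs (Proposition \ref{prop.thickness}) rather than via honest recurrence to a compact set — this is precisely where the argument must deviate from the convex cocompact case of \cite{McMullen2016horocycles}; and (ii) the $vAUv^{-1}$ case, in particular verifying that the second blowup genuinely escapes $\op{N}_G(U)$ and tracking throughout the distinction between $\RFPM$ and the ambient $G$-action. The classification inputs — Theorem \ref{thm.plane}, the $AU$-minimality of closed plane orbits (Corollary \ref{cor.auminimal}), and the $N$-orbit picture (Proposition \ref{prop.classifyhoro}, Lemma \ref{lem.ferte}) — carry the rest.
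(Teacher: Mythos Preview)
Your strategy diverges from the paper's at the blowup step, and the divergence creates a gap you do not close. The paper does not use the polynomial $Ug_nU$ blowup (Lemma \ref{lem.qr2}) here; it uses the $H$-sided thick-set blowup of Lemma \ref{lem.seqrecurrence}. After arranging $x_n\in xU\cap\RFM$ with $x_n\to y_0$ on the plane (via Proposition \ref{prop.intorfm}) and taking $T_n\subset U$ a $4k_{\M}$-thick return set to a fixed compact piece of $\RFM$ (Proposition \ref{prop.thickness} in the $\BFM$ case, Lemma \ref{lem.AO} after making $y_0A$ compact otherwise), Lemma \ref{lem.seqrecurrence} gives a thick, hence unbounded, set $V_0\subset\limsup_nHg_nT_n$. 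For each $v\in V_0$ one has $h_ng_nu_n\to v$ with $u_n\in T_n$, so $x_nu_n=(y_0h_n^{-1})(h_ng_nu_n)$ lies in a compact set and subconverges; then Corollary \ref{cor.uinsurface} (in the $\BFM$ case) or Lemma \ref{lem.etar} (otherwise) upgrades this to $(yH\cap\RFPM)v_{T_r}\subset X$ for an unbounded family of $T_r$, and Lemma \ref{lem.yvn} finishes. The whole point of putting $H$ on the left is that it makes the limit exist: either $y_0H$ is compact, or the thick return set forces $x_nu_n$ into a compact window.

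Your $Ug_nU$ route loses exactly this. From $\hat u_ng_nu_n'\to\Phi(t)$ you need $w\hat u_n^{-1}$ to subconverge in $Z=yH\cap\RFPM$, but $\hat u_n$ is typically unbounded and $Z$ is noncompact when the plane meets cusps; Proposition \ref{prop.thickness} constrains $x_nu_n'$, not $w\hat u_n^{-1}$, and the analogy with Proposition \ref{prop.sigma2} breaks because there $yU\Phi(t)$ is already known to sit inside the target set before any recurrence is invoked. Even if you grant yourself an $L=vAv^{-1}$ orbit in $X$, your ``second blowup'' is circular: producing a second closed plane inside $X$ just reinstates the hypothesis of the theorem, and nothing forces the new one-parameter group $L'$ to differ from $L$, so the iteration need not terminate. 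Finally, the $g_n\in\op{N}_G(U)$ branch does not yield $V$-invariance: taking $w_n\in xU$ (which you may, since $x\notin yH$ and $\overline{xU}=X$) gives $X=\overline{(wa_n)U}\,v_n=Zv_n$, and $Z\subset X=Zv_n$ forces $yv_n^{-1}\in yH$; since $yH$ is properly embedded and $v_n\to e$, discreteness of $\Gamma$ gives $v_n=e$, contradicting $g_n\notin H$. So that branch is vacuous, and the real dichotomy you want is $g_n\in HV$ versus $g_n\notin HV$, matching the hypothesis of Lemma \ref{lem.seqrecurrence}.
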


Recall the notion of boundary frames from \eqref{eqn.bfmdef}. We first make the following observation on the dichotomy of closed $H$-orbits:

\begin{lemma} \label{lem.closedH}
Let $y \in \Ga \ba G$ be such that $yH$ is closed. Then either 
$$yH \subset \BFM \quad \text{or} \quad yH \cap \BFM \cdot V = \emptyset.$$
\end{lemma}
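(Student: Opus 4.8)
Assuming $yH\cap\BFM\cdot V\neq\emptyset$, the plan is to show $yH\subseteq\BFM$ (when the intersection is empty there is nothing to prove). I would fix $z\in yH\cap\BFM\cdot V$ and write $z=wv_s$ with $w\in\BFM$ and $v_s\in V$. If $s=0$ then $z=w\in\BFM$, and since $w$ lies on one of the closed $H$-orbits $z_iH\subseteq\BFM$ making up $\BFM$, we get $yH=zH=wH=z_iH\subseteq\BFM$ and are done. So the whole point is to rule out $s\neq0$, and I will derive a contradiction from it.

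The key computation is to identify $\ov{zU}$ exactly. Since $U$ and $V$ commute, $zU=wv_sU=wUv_s$, so $\ov{zU}=\ov{wU}\,v_s$; and because $w\in\BFM$, Corollary \ref{cor.uinsurface} (its final ``in particular'' assertion, which is available here because the components of $\partial\core(\M)$ are compact, hence their Fuchsian groups are parabolic-free) gives $\ov{wU}=wH$. Thus $\ov{zU}=wHv_s$. On the other hand $zU\subseteq zH=yH$ with $yH$ closed, so $\ov{zU}\subseteq yH$, whence $wHv_s\subseteq yH$. Writing $w=zv_s^{-1}$ we see $wHv_s=z\,(v_s^{-1}Hv_s)$ is exactly the orbit of $z$ under $H':=v_s^{-1}Hv_s$, so the containment says the $H'$-orbit of $z$ sits inside its $H$-orbit.

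To finish I would pass to a lift $z=[g]$ and set $\Ga^z:=g^{-1}\Ga g$. Since $yH=\Ga gH$ is closed in $\Ga\ba G$, its preimage $\Ga gH$ is closed in $G$, so the inclusion $zH'\subseteq zH$ pulls back to $\Ga^zH'\subseteq\Ga^zH$ in $G$; in particular $H'\subseteq\Ga^zH=\bigcup_{\gamma\in\Ga^z}\gamma H$, a countable union of closed subsets (left cosets of the closed subgroup $H$) of the group $H'$, which is a closed Lie subgroup of $G$ and hence completely metrizable. By the Baire category theorem some $H'\cap\gamma_0H$ has nonempty interior in $H'$; being closed in $H'$ it contains a nonempty open set $O$, and for $h_0'\in O$ one has $\gamma_0H=h_0'H$ and $(h_0')^{-1}O\subseteq H$ is an open neighborhood of $e$ in $H'$. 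Since $H$ and $H'$ are connected Lie subgroups of $G$ of the same dimension, a neighborhood of $e$ in $H'$ lying in $H$ forces $\mathrm{Lie}(H')=\mathrm{Lie}(H)$, hence $H'=H$, i.e. $v_s\in\op{N}_G(H)=\stab_G(\widehat\R)$. But $v_s\widehat\R=\{\zeta\in\C:\Im\zeta=s\}\cup\{\infty\}\neq\widehat\R$ for $s\neq0$, so $v_s\notin\stab_G(\widehat\R)$ — a contradiction. Therefore $s=0$, $z\in\BFM$, and $yH\subseteq\BFM$.

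The main obstacle, and the step I would think hardest about, is the second paragraph: closedness of $yH$ only gives the weak inclusion $\ov{zU}\subseteq yH$, and the real content is recognizing that $\ov{zU}$ is the \emph{full} translated plane $wHv_s$ — this is precisely where the special behavior of boundary frames ($\ov{wU}=wH$) and the commutation $[U,V]=1$ enter, converting the problem into the soft fact that the $\PSL_2(\R)$-conjugate $v_s^{-1}Hv_s\neq H$ cannot be engulfed by a countable union of $H$-cosets. After that the argument is a routine Baire-category/Lie-algebra computation, and no fine analysis of the limit set is required. Along the way I would just record the elementary facts used: $wH\subseteq\BFM$ for $w\in\BFM$; left cosets $\gamma H$ are closed in $G$; and $\op{N}_G(H)=\stab_G(\widehat\R)$, which holds because $Ho$ is the unique $H$-invariant plane in $\H^3$.
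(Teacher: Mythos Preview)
Your proof is correct, but it takes a substantially different route from the paper's.

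The paper's argument is a three-line dynamical contraction: given $y_0=zv\in yH$ with $z\in\BFM$ and $v\in V$, one simply pushes along the geodesic flow, writing $y_0a_t=(za_t)(a_t^{-1}va_t)$. Since $\BFM$ is compact and $A$-invariant, $za_{t_n}$ subconverges to some $z_0\in\BFM$; since $a_t^{-1}va_t\to e$, this forces $y_0a_{t_n}\to z_0$. Closedness of $yH$ then gives $z_0\in yH\cap\BFM$, hence $yH=z_0H\subset\BFM$. No appeal to Corollary~\ref{cor.uinsurface}, no Baire category, no Lie algebra comparison.

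Your approach instead spends the result $\ov{wU}=wH$ for $w\in\BFM$ (Corollary~\ref{cor.uinsurface}) to produce an inclusion $zH'\subset zH$ with $H'=v_s^{-1}Hv_s$, and then lifts to $G$ and runs a Baire category argument on the countable cover $H'\subset\bigcup_{\gamma\in\Ga^z}\gamma H$ to force $H'=H$, contradicting $v_s\notin\op{N}_G(H)$. This is valid and self-contained, but it is heavier: it depends on the machinery of Section~\ref{sec.geodesicplane} (Proposition~\ref{prop.noneltgf}, Lemma~\ref{lem.dalbo}) that underlies Corollary~\ref{cor.uinsurface}, whereas the paper's proof uses only compactness of $\BFM$ and the elementary contraction $a_t^{-1}Va_t\to e$. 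What your route buys is a certain robustness --- it would work with any compact $U$-minimal $H$-orbit in place of a boundary component --- but for the lemma as stated the paper's contraction argument is shorter and uses strictly less.
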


\begin{proof}
Suppose that the closed $H$-orbit $yH$ intersects $\BFM \cdot V$. We then have $y_0 \in yH$, $z \in \BFM$, and $v \in V$ such that $$y_0 = zv.$$
For $t > 0$, we have
$
y_0 a_t = z a_t (a_t^{-1} v a_t)
$. Since $z a_t$ belongs to a compact set $\BFM$, there exists a sequence $t_n \to \infty$ as $n \to \infty$ so that $z a_{t_n}$ converges to a point in $\BFM$. We denote by $z_0 \in \BFM$ its limit. Since $a_t^{-1} v a_t \to e$ as $t \to \infty$,
$$
y_0 a_{t_n} = z a_{t_n} (a_{t_n}^{-1} v a_{t_n}) \to z_0 \in \BFM.
$$
On the other hand, $y_0 a_{t_n} \in yH$ and $yH$ is closed, and hence $z_0 \in yH$ as well. Since $\BFM$ is $H$-invariant, we  have
$$
yH = z_0 H \subset \BFM.
$$
This finishes the proof.
\end{proof}

The following is the key lemma of this section:

\begin{lemma}\label{lem.yvn}
    Let $y \in \RFM$ be such that $yH$ is closed. Let $v_n \in V$ be a sequence such that $v_n \to \infty$ as $n \to \infty$. Suppose that  $yHv_n\cap \RFM\cdot U\neq\emptyset$ for all $n \ge 1$. Then
    $$
    \limsup_{n\to\infty} (yHv_n \cap \RFPM) = \RFPM.
    $$
\end{lemma}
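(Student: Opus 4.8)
The plan is to analyze the sets $yHv_n \cap \RFPM$ via the structure of the closed $H$-orbit $yH$, distinguishing the two cases from Lemma~\ref{lem.closedH}: either $yH \subset \BFM$ or $yH \cap \BFM \cdot V = \emptyset$. The key point is that for $z = [g_z] \in yH \cap \RFPM$, the set $zH v_n \cap \RFPM$ sees the geodesic plane $g_z Ho$ being ``pushed'' by the parabolic $v_n$; since $v_n \to \infty$, the intersection circle $g_z v_n \widehat\R \cap \Lambda$ either becomes all of a boundary circle of a disk $B_i$ (the $\BFM$ case) or, generically, its pieces spread out and, after renormalizing by the frame flow, equidistribute.

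First I would fix $z_n \in yHv_n \cap \RFM\cdot U$ for each $n$, so $z_n = y h_n v_n$ with $h_n \in H$, and consider $w_n \in z_n U \cap \RFM$; then $w_n \in yHv_n$ too (since $v_n$ normalizes $U$), and $w_n H = yHv_n$ is a closed $H$-orbit meeting $\RFM$. By Proposition~\ref{prop.noneltgf}, writing $w_n = [g_n]$, the group $H \cap \Ga^{g_n}$ is non-elementary geometrically finite with limit set $g_n^{-1}\Lambda \cap \widehat\R$. The strategy is to show the lengths (areas) of these surfaces $w_nH = yHv_n$ blow up along the frame flow — more precisely, that $w_n \notin F_{\eta_n}(N)$ for some $\eta_n \to \infty$, or that $w_n$ leaves every compact set in a controlled way — and then apply the expansion Lemma~\ref{lem.OS} (or its surface-intrinsic version Lemma~\ref{lem.expansioninplane} through the proper embedding $\phi$) to conclude $\limsup_n (w_nH \cap \RFPM) = \RFPM$. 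The divergence of $v_n$ forces the relevant geometric quantity to diverge: conjugating, $v_n^{-1} (H \cap \Ga^{g'}) v_n$ intersected with $H$ degenerates, so any fixed hyperbolic element of $H \cap \Ga^{g}$ is moved off $H$ by $v_n$, and the surfaces $yHv_n \cap \RFM$ must recede into cusps (or shrink in a way detected by $\V$).

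The main obstacle I expect is ruling out the ``wrong'' limiting behavior: a priori $yHv_n \cap \RFPM$ could converge into a proper closed $H$-invariant subset, or degenerate to a compact horosphere piece, rather than filling $\RFPM$. This is exactly where the round Sierpi\'nski hypothesis and Lemma~\ref{lem.zv} enter — in the case $yH \cap \BFM\cdot V = \emptyset$, one knows $yHv_n$ stays in $\RFM\cdot U$ minus boundary frames, so Proposition~\ref{prop.intorfm}(1) and the thickness of recurrence times (Proposition~\ref{prop.thickness}) keep the renormalized orbit from escaping to infinity prematurely; in the case $yH \subset \BFM$, compactness of $\BFM$ plus Corollary~\ref{cor.uinsurface} ($\ov{yU} = yH$ for $y \in \BFM$) handles it. I would also need to verify that Theorem~\ref{thm.plane} does not give the alternative $\ov{w_nH} = \RFPM\cdot H$ in a way that breaks the argument — but since each $w_nH$ is assumed/arranged to be a genuine closed orbit, this is consistent. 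Concluding, once the length-divergence is established, Lemma~\ref{lem.OS} delivers $\limsup_{n\to\infty}(yHv_n \cap \RFPM) = yH'\cap\RFPM$ for the relevant limiting plane, and a separate density argument (using that $\bigcup_n yHv_n$ accumulates on a conical-limit-point frame, whose $N$-orbit is dense by Lemma~\ref{lem.ferte}) upgrades this to all of $\RFPM$.
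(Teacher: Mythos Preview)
Your proposal has a genuine gap, and in fact rests on a false claim. You write that $w_nH = yHv_n$ is a closed $H$-orbit, but $yHv_n$ is \emph{not} an $H$-orbit: $V$ does not normalize $H$, so $v_n H v_n^{-1} \neq H$ for $v_n \neq e$. The set $yHv_n$ is an orbit of the conjugate $v_n^{-1}Hv_n$, which is a different copy of $\PSL_2(\R)$ stabilizing a different circle. Consequently, none of Proposition~\ref{prop.noneltgf}, Lemma~\ref{lem.expansioninplane}, or Lemma~\ref{lem.OS} applies to $w_n$ in the way you want: those results concern a \emph{fixed} closed $H$-orbit and $U$-orbits inside it, whereas your $w_n$ lie in a sequence of distinct translates. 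Even if you repaired this by working with the conjugated groups, Lemma~\ref{lem.OS} would at best give back $yH \cap \RFPM$ (or its translate), not $\RFPM$; your final ``separate density argument'' invoking Lemma~\ref{lem.ferte} is not a proof, since accumulation on a single conical-limit-point frame does not by itself force the limsup to be all of $\RFPM$.

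The idea you are missing --- and which is the heart of the paper's argument --- is that the limsup $X_0 := \limsup_{n} (yH \cap \RFPM)v_n$ is automatically \emph{$N$-invariant}. This comes from the fact that $yH \cap \RFPM$ is $AU$-invariant and that $\limsup_n v_n^{-1}Av_n = V$ (since $v_n \to \infty$), so $X_0$ picks up invariance under $UV = N$. Once you know $X_0$ is closed and $N$-invariant, Proposition~\ref{prop.classifyhoro} gives the dichotomy $X_0 = \RFPM$ or $X_0 \subset F_\eta(N)$ for some $\eta$, and the latter is ruled out by producing, via the case split of Lemma~\ref{lem.closedH}, a sequence $y_nv_n \in (yH \cap \RFPM)v_n \cap \RFM\cdot U$ to which Lemma~\ref{lem.xt} applies. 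Your instinct to split according to Lemma~\ref{lem.closedH} is correct, but it enters only at this last step, not as the organizing principle of the whole proof.
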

\begin{proof}
    For simplicity, we set $Y := yH \cap \RFPM$ which is $AU$-invariant and $X_0 := \limsup_{n \to \infty} Y v_n$. Then the lemma is equivalent to $X_0 = \RFPM$.

Since $Y$ is $AU$-invariant and $U$ and $V$ commute,
$$
X_0 = \limsup_{n \to \infty} Y v_n (v_n^{-1} A v_n) U.
$$
Since $V = \limsup_{n \to \infty} v_n^{-1} A v_n$ and $N = VU$, we have that
$$
X_0  = X_0 \cdot N.
$$

    By \eqref{eqn.deflimsup}, $X_0$ is closed as well. Now it follows from Proposition \ref{prop.classifyhoro} that either
    \be \label{eqn.plane1}
    X_0 = \RFPM \quad \text{or} \quad X_0 \subset F_{\eta}(N) \quad \text{for some } \eta > 0.
    \ee
    We apply Lemma \ref{lem.xt} to finish the proof, by finding a sequence $y_n \in Y$ such that $y_n v_n \in Y v_n$ satisfies the condition therein. By Lemma \ref{lem.closedH}, there are two cases:
    $$yH \subset \BFM \quad \text{or} \quad yH \cap \BFM \cdot V = \emptyset.$$

    Suppose first that $yH \subset \BFM$. In this case, for each $n \ge 1$, we choose any element $y_n \in yH$ such that $y_n v_n \in \RFM \cdot U$, which exists by the hypothesis. This in particular implies $y_n \in \RFPM$ as well, and hence $y_n \in Y$.
    Since $y_n \in \BFM$, $y_n H$ is compact, and hence $y_n v_n N = y_n N$ is not compact for all $n \ge 1$ by Lemma \ref{lem.ferte}. Therefore, the sequence $y_n v_n \in \RFM \cdot U$ satisfies the condition in Lemma \ref{lem.xt}.

    We now consider the case that $yH \cap \BFM \cdot V = \emptyset$. In this case, we fix any sequence $t_n  > 0$ such that $t_n  \to \infty$ as $n \to \infty$, and set $y_n := y a_{t_n}^{-1} \in Y$ for each $n \ge 1$, where $a_{t_n} = \begin{pmatrix} e^{t_n/2} & 0 \\ 0 & e^{-t_n/2} \end{pmatrix}$. Since $Y \cap \BFM \cdot V = \emptyset$, we also have that $y_n v_n \notin \BFM \cdot V$, and hence $y_n v_n \in \RFM \cdot U$ for all $n \ge 1$ by Lemma \ref{lem.zv}. If $y_n v_n N = y_n N$ is compact, then
    $$
    \V(y_n v_n) = \V(y a_{t_n}^{-1} v_n) = e^{2t_n} \V(y).
    $$
    Since $t_n \to \infty$ as $n \to \infty$, the sequence $y_n v_n \in \RFM \cdot U$ also satisfies the condition in Lemma \ref{lem.xt}. 

    In any case, we obtain a sequence $y_n v_n \in Y v_n \cap \RFM \cdot U$ to which Lemma \ref{lem.xt} applies. Therefore, we have for any $\eta > 0$ that
    $$
    \left( \limsup_{n \to \infty} y_n v_n U \right) \cap \RFM - F_{\eta}(N) \neq \emptyset.
    $$
    Since $y_n v_n U \subset Yv_n U = Y v_n$, this implies  that
    $$X_0 \cap \RFM - F_{\eta}(N) = \left( \limsup_{n \to \infty} Y v_n U \right) \cap  \RFM - F_{\eta}(N) \neq \emptyset \quad \text{for any } \eta > 0.$$
    Together with \eqref{eqn.plane1}, we must have
    $$X_0 = \RFPM,$$
    completing the proof.
\end{proof}

\subsection*{A closed $H$-orbit in $\BFM$}
To prove Theorem \ref{thm.xwh}, we first consider the case when the $U$-orbit closure contains an $H$-orbit in $\BFM \subset \RFM$.

\begin{proposition}\label{prop.yh2}
    Let $x\in\RFM$ and $X=\ov{xU}$.
    Suppose that there exists $y \in \BFM$ such that $yH \subset X$.
    Then either 
    $$X=yH \quad \text{or} \quad X=\RFPM.$$
\end{proposition}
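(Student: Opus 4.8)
The plan is to assume $X\supsetneq yH$ and deduce $X=\RFPM$; throughout one uses that $X=\ov{xU}$ is closed, connected, $U$-invariant, and contained in $\RFPM$. The first step is to extract the rigidity coming from $y\in\BFM$. Since $y\in\BFM$ the orbit $yH$ is compact, so writing $y=[g]$, Proposition \ref{prop.noneltgf} together with compactness of $(H\cap\Ga^g)\ba H$ shows that $H\cap\Ga^g$ is a cocompact lattice in $H\cong\PSL_2(\R)$; in particular its limit set is all of $\widehat\R$ and every point of it is conical. Feeding this into Corollary \ref{cor.uinsurface} gives that $yH$ is a compact $U$-minimal set, i.e. $\ov{zU}=yH$ for every $z\in yH$; since no such $zU$ is then a compact $U$-orbit, the same corollary forces $z^{+}$ to be a conical limit point of $\Ga$, whence $\ov{zN}=\RFPM$ for every $z\in yH$ by Lemma \ref{lem.ferte}(1). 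Consequently it suffices to produce a single $N$-orbit $zN$ with $z\in yH$ inside $X$: then $\RFPM=\ov{zN}\subseteq X\subseteq\RFPM$.

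Next, since $X$ is connected and $yH$ is compact, hence closed, $yH$ cannot be open in $X$, so there is a sequence $w_n\in X\setminus yH$ with $w_n\to y_0$ for some $y_0\in yH$. Write $w_n=y_0 g_n$ with $g_n\to e$ in $G$; using that the orbit map \eqref{eqn.properembedding} is a proper embedding, so that $yH$ is an embedded submanifold, we get $g_n\notin H$ for all large $n$, and after passing to a subsequence we may assume either $g_n\notin\op{N}_G(U)$ for all $n$, or $g_n\in\op{N}_G(U)$ for all $n$. The heart of the argument is to convert these transversal perturbations into a $V$-direction inside $X$, using the unipotent blowup together with the minimality of the $U$-flow on the compact set $yH$. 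If $g_n\notin\op{N}_G(U)$, apply Lemma \ref{lem.qr2}(1), Proposition \ref{prop.blowup2} and Corollary \ref{cor.blowup2} to $g_n$: conjugating by $u_t$ at the blowup scale produces a nontrivial one-parameter subgroup $L\le AV$, and pairing this with times $t_k\to\infty$ along which $y_0 u_{t_k}$ returns to a small neighbourhood of a point $y_1\in yH$ — possible because the $U$-flow on the compact set $yH$ is minimal, so its return-time set is syndetic and in particular thick — yields $y_1\ell\in X$ for every $\ell\in L$. By Lemma \ref{lem.1psg}, either $L=V$ or $L=vAv^{-1}$ for some $v\in V$. If $L=V$, then $y_1 V\subseteq X$, and since $[U,V]=e$ and $X$ is closed and $U$-invariant this upgrades to $y_1 N=y_1 VU\subseteq X$, and we are done by the first step.

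In the remaining cases — $L=vAv^{-1}$, or $g_n\in\op{N}_G(U)$, in which case near $e$ one has $\op{N}_G(U)=AN$ so that $g_n=a_n u_{s_n} v_{r_n}$ with $r_n\to 0$ and $r_n\ne 0$ (as $g_n\notin H\supseteq AU$), hence $w_n=(y_0 a_n u_{s_n})v_{r_n}\in yH v_{r_n}$ and $U$-minimality of $yH$ gives $yH v_{r_n}\subseteq X$ — one finds that $X$ contains a tilted plane $yH v$ with $v\ne e$. Applying to it the expansion machinery for compact $U$-orbits in a closed plane (Lemma \ref{lem.OS}, Lemma \ref{lem.expansioninplane}) and the non-escape Lemma \ref{lem.xt}, together with Lemma \ref{lem.zv} and Lemma \ref{lem.closedH} to see that the relevant $V$-translates of $yH$ meet $\RFM\cdot U$, one produces translates $yH v_n$ with $v_n\to\infty$ and $yH v_n\cap\RFPM\subseteq X$; then Lemma \ref{lem.yvn} gives $\RFPM=\limsup_{n}(yH v_n\cap\RFPM)\subseteq X$, so $X=\RFPM$, completing the proof.

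The step I expect to be the main obstacle is exactly the interaction with the cusps of $\M$: unlike in the convex cocompact setting of \cite{McMullen2016horocycles}, both the limits produced by the unipotent blowup and the expanded tilted planes could a priori run off into a cusp, so one must control the recurrence quantitatively via Proposition \ref{prop.thickness} and invoke the non-escape statement of Lemma \ref{lem.xt}, using the volume-scaling \eqref{eq.scale} to see that any compact $N$-orbits appearing in the relevant families have diverging volume — precisely as in the proof of Lemma \ref{lem.yvn}. The secondary difficulty is the bookkeeping that turns a single $V$-translate of $yH$ inside $X$ into an unbounded thick family of such translates, which is where the $U$-minimality of $yH$ and Lemma \ref{lem.yvn} are used in tandem.
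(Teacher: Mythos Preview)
Your overall strategy---show $yH$ is $U$-minimal, find a $V$-direction transverse to $yH$ inside $X$, bootstrap to unbounded $V$-translates, then invoke Lemma~\ref{lem.yvn}---is correct in spirit, but the execution has a genuine gap at the blowup step. The paper works with $x_n\in xU$ converging to $y$ (so that $X=\ov{x_nU}$ throughout), modifies via Proposition~\ref{prop.intorfm}(2) to get $x_n\in\RFM$, and then splits on $g_n\in HV$ versus $g_n\notin HV$. In the $HV$-case the equality $X=\ov{x_nU}$ forces $X=yHv_n$, hence $v_n=e$ and $X=yH$; in the non-$HV$ case it applies Lemma~\ref{lem.seqrecurrence}, which is tailored to produce a \emph{thick} (hence unbounded) set $V_0\subset\limsup Hg_nT_n$, with the thick $T_n$ coming from Proposition~\ref{prop.thickness}. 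Your dichotomy $g_n\in\op{N}_G(U)$ versus not, combined with the polynomial blowup Lemma~\ref{lem.qr2}(1), does not do this job: since $yH$ is $A$-invariant, you always have $yH\Phi(t)=yH\,v(\nu(t))$, so when $\nu$ is constant (which can happen, e.g.\ for $g_n\in HV\setminus AN$, a case your dichotomy places on the polynomial side) you recover only $yH\subset X$ and learn nothing. And in the $g_n\in\op{N}_G(U)$ branch you obtain $yHv_{r_n}\subset X$ with $r_n\to 0$, $r_n\ne 0$; your appeal to ``the expansion machinery (Lemma~\ref{lem.OS}, Lemma~\ref{lem.expansioninplane}) and Lemma~\ref{lem.xt}'' to pass from small translates to unbounded ones is not a valid step---those lemmas concern compact $U$-orbits of diverging length in a fixed closed $H$-orbit, not $V$-translates of $yH$.

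There is a second, independent gap. Lemma~\ref{lem.yvn} requires $yHv_n\cap\RFM\cdot U\neq\emptyset$ for each $n$, and when $yH\subset\BFM$ this hypothesis is not automatic (indeed $yHv_n\subset\BFM\cdot V$ trivially, so Lemma~\ref{lem.zv} does not help). In the paper's argument it is obtained for free: the $u_n$ lie in the thick return-time set $T_n\subset\T_{W_{\rho+R}}(x_n)$, so $x_nu_n\in W_{\rho+R}\subset\RFM$, and the limit $y_1v$ lies in $\RFM$. Your route produces no such points in $\RFM$, so even granting an unbounded family $yHv_n\subset X$, you cannot feed it into Lemma~\ref{lem.yvn}. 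The fix is precisely what the paper does: keep $x_n\in xU\cap\RFM$, split on $HV$, and use Lemma~\ref{lem.seqrecurrence} rather than Lemma~\ref{lem.qr2}.
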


\begin{proof}
    Note that $y \in \BFM$ implies that $yH$ is closed. Since $y \in X$, there exists a sequence $x_n \in xU \subset \RFM \cdot U$ such that $x_n \to y$ as $n \to \infty$. By Proposition  \ref{prop.intorfm}(2), after passing to a subsequence, there exists a sequence $u_n \in U$ such that $x_n u_n \in \RFM$ and $x_n u_n \to y_0$ for some $y_0 \in \BFM$. Replacing $x_n$ with $x_n u_n$, we assume that $x_n \in \RFM$ and $x_n \to y_0$.

    Then there exists a sequence $g_n \in G$ such that $g_n \to e$ and $x_n = y_0 g_n$ for all $n \ge 1$. After passing to a subsequence, $g_n \in HV$ for all $n \ge 1$ or $g_n \notin HV$ for all $n \ge 1$.
    
    If $g_n \in HV$ for all $n \ge 1$, we write $g_n = h_n v_n$ for some $h_n \in H$ and $v_n \in V$, and hence
    $$X = \ov{x_n U} = \ov{y_0 g_n U} = \ov{y_0 h_n v_n U} = \ov{y_0 h_n U}v_n \quad \text{for all } n \ge 1.$$
    Since $\ov{y_0 h_n U} \subset y_0 H$ and $y_0 H$ is compact as $y_0 \in \BFM$, it follows from Corollary \ref{cor.uinsurface} that $\ov{y_0 h_n U} = y_0 H$. Therefore,
    $$X = y_0 H v_n \quad \text{for all } n \ge 1.$$
    Since $X$ already contains a closed $H$-orbit $yH$, we must have $v_n = e$ and $$X = y_0H = yH$$ in this case.

    Now consider the case that $g_n \notin HV$ for all $n \ge 1$. Let $k_{\M} > 1$ and $R > 0$ as in Proposition \ref{prop.tt} and Proposition \ref{prop.thickness} respectively.
    Let $\rho > 0$ be such that $x_n \to y_0$ in $W_\rho$. Then by Proposition \ref{prop.thickness}(1), the set $T_n := \T_{W_{\rho + R}}(x_n)$ is $4k_{\M}$-thick. Applying Lemma \ref{lem.seqrecurrence} to $g_n \in G - HV$ and $T_n \subset  U$, we have an unbounded subset $V_0 \subset V$ such that
    $$V_0 \subset \limsup_{n \to \infty} H g_n T_n.$$
    In particular, for any $v \in V_0$, there exist sequences $h_n \in H$ and $u_n \in T_n$ such that $h_n g_n u_n \to v$ as $n \to \infty$. Since $y_0H$ is compact, $y_0 h_n^{-1} \in y_0 H$ converges to some element, say $y_1 \in y_0 H$. Then for each $n \ge 1$,
    $$
    y_0 h_n^{-1} (h_n g_n u_n) = x_n u_n \in X \cap W_{\rho + R}
    $$
    and hence taking $n \to \infty$, we have  $y_1 v \in X \cap W_{\rho + R}$.
    Therefore,
    \be \label{eqn.middlebfmcontain}
    y_0 H v = \ov{y_1 U} v = \ov{y_1 v U} \subset X
    \ee
    where the first equality is due to Corollary \ref{cor.uinsurface}.
    Moreover, since $y_1 v \in W_{\rho + R} \subset \RFM$ and $y_1 v \in y_0 H v$, we in particular have that $y_0 H v \cap \RFM \cdot U \neq \emptyset$.
    Since this holds for any $v \in V_0$ and $V_0 \subset V$ is unbounded, 
    we take any sequence $v_n \to \infty$ in $V_0$ and apply Lemma \ref{lem.yvn} to conclude that $$\limsup_{n \to \infty} (y_0 H v_n \cap \RFPM) = \RFPM.$$
    Since $y_0HV_0 \subset X$ as in \eqref{eqn.middlebfmcontain}, this implies
    $$X = \RFPM,$$
    as desired.
\end{proof}

\subsection*{A closed $H$-orbit outside $\BFM$}

We next turn to the case that $yH$ is disjoint from $\BFM$.
Unlike the case that the $H$-orbit is contained in $\BFM$, $yH$ can contain a compact $U$-orbit.
The proof of the following lemma is similar to that of Lemma \ref{lem.xt}:
\begin{lemma}\label{lem.AO}
    Let $x_n \in \RFM$ be a sequence converging to $y \in \RFM$ such that the $A$-orbit  $yA$ is compact. Let $k_{\M} > 1$ be as in Proposition \ref{prop.tt}.
    Then  for any $\eta>0$, there exists a neighborhood $O_\eta(N) \subset \Ga \ba G$ of $F_\eta(N)$ such that
    $\RFM - O_{\eta}(N)$ is compact and 
    $$
    \T_{\RFM - O_{\eta}(N)}(x_n) = \{ u \in U : x_nu \in\RFM-O_\eta(N)\}
    $$
    is $4k_{\M}$-thick for all sufficiently large $n \ge 1$.
\end{lemma}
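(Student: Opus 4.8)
The plan is to follow the proof of Lemma \ref{lem.xt} closely, the one difference being that here $x_n$ itself converges to $y$, and since $yA$ is compact we get an \emph{a priori} bound on how deep the $x_n$ (and their $a_s$-translates) can travel into the cusps. This lets us invoke the unconditional thickness statement Proposition \ref{prop.thickness}(1) in place of the ``thick at $\infty$'' variant used in Lemma \ref{lem.xt}.

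\emph{Step 1: a depth bound.} Since $yA$ is compact, $\pi(yA)\subset\M$ is a compact set, and since $\bigcap_{\rho\geq 0}\ov{\Horo_\rho}=\emptyset$ (the cusp horoballs $\horo^i_\rho$ shrink to ideal points), there is $\rho_0\geq 1$ with $yA\cap\Horo_{\rho_0-1}=\emptyset$. Because $\ov{\Horo_{\rho_0}}\subset\Horo_{\rho_0-1}$ by the construction \eqref{eqn.defxi0}, this gives $yA\cap\ov{\Horo_{\rho_0}}=\emptyset$. As $\RFM$ is $A$-invariant, for each fixed $s\in\R$ we have $x_na_s\in\RFM$ and $x_na_s\to ya_s\in\RFM-\ov{\Horo_{\rho_0}}$; since $\RFM-\ov{\Horo_{\rho_0}}$ is open and contained in $W_{\rho_0}$, it follows that $x_na_s\in W_{\rho_0}$ for all sufficiently large $n$.

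\emph{Step 2: choosing $O_\eta(N)$ and applying recurrence.} Let $R>0$ be as in Proposition \ref{prop.thickness}. Given $\eta>0$, fix $s>0$ large enough that $e^{-2s}\eta<e^{-2(\rho_0+R)}\xi_{\M}$, and set
$$O_\eta(N):=\Horo_{\rho_0+R}\,a_s^{-1}=\op{int}(F_{e^{-2(\rho_0+R)}\xi_{\M}})\,a_s^{-1},$$
which is open. It contains $F_\eta(N)$: by \eqref{eq.scale} (together with $A$ normalizing $N$) one has $F_\eta(N)a_s\subset F_{e^{-2s}\eta}(N)\subset\op{int}(F_{e^{-2(\rho_0+R)}\xi_{\M}})$, the last inclusion by the choice of $s$ and \eqref{eq.horo}. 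Moreover $\RFM-O_\eta(N)=W_{\rho_0+R}\,a_s^{-1}$, using $A$-invariance of $\RFM$ exactly as in Lemma \ref{lem.xt}, and this is compact since $W_{\rho_0+R}$ is. By Step 1, $x_na_s\in W_{\rho_0}$ for large $n$, so Proposition \ref{prop.thickness}(1) shows $\mathsf{T}_{W_{\rho_0+R}}(x_na_s)$ is $4k_{\M}$-thick for all large $n$.

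\emph{Step 3: transporting thickness to $x_n$.} Conjugation by $a_s$ sends $u_t$ to $u_{te^{-s}}$, i.e. is dilation by $e^{-s}$ on $U\cong\R$. Hence
$$\mathsf{T}_{\RFM-O_\eta(N)}(x_n)=\{u\in U:x_na_s\,(a_s^{-1}ua_s)\in W_{\rho_0+R}\}$$
equals, as a subset of $\R$, the dilate $e^s\cdot\mathsf{T}_{W_{\rho_0+R}}(x_na_s)$. Since dilating a $k$-thick subset of $\R$ by a nonzero scalar again gives a $k$-thick set, $\mathsf{T}_{\RFM-O_\eta(N)}(x_n)$ is $4k_{\M}$-thick for all large $n$, which is the assertion. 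The only genuinely delicate points are the cusp bookkeeping: in Step 1 one must know that $yA$ compact keeps the $x_na_s$ out of a \emph{fixed} open horoball neighbourhood (this uses the nesting $\ov{\Horo_{\rho_0}}\subset\Horo_{\rho_0-1}$), and in Step 2 one must pick $s$ large enough — depending on the possibly large $\eta$ as well as on $\rho_0$ — that the dilated deep-horoball neighbourhood $O_\eta(N)$ still swallows all of $F_\eta(N)$. Everything else is the conjugation-and-scaling manipulation already used for Lemma \ref{lem.xt}.
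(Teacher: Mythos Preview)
Your proof is correct and follows essentially the same approach as the paper's: both use compactness of $yA$ to pin down a depth $\rho_0$ (the paper's $\rho$) so that $x_na_s\in W_{\rho_0}$ for large $n$, then define $O_\eta(N)$ as the $a_s^{-1}$-translate of the deep horoball neighborhood $\Horo_{\rho_0+R}$ with $s$ chosen large depending on $\eta$, and finally invoke Proposition~\ref{prop.thickness}(1) together with the dilation-invariance of $k$-thickness under conjugation by $a_s$. The only cosmetic differences are your use of the nesting $\ov{\Horo_{\rho_0}}\subset\Horo_{\rho_0-1}$ in place of the paper's $yA\subset W_{2\rho}\subset\inte(W_\rho)$, and the paper's extra factor $0.5$ in the choice of $s$; note also that ``$\RFM-\ov{\Horo_{\rho_0}}$ is open'' should read ``open in $\RFM$,'' which is all that is needed since $x_na_s\in\RFM$.
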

\begin{proof}
    Let $R>0$ be the constant as in Proposition \ref{prop.thickness}.
    We choose $\rho \ge 0$ and set $\xi := e^{-2 \rho} \xi_{\M}$ such that
    $$y A \subset W_{2\rho} \subset \inte(W_{\rho}) = \inte( \RFM - \inte( F_{\xi} ))$$
    which is possible by the compactness of $yA$.

    We set $\xi_1 := e^{-2R}\xi = e^{-2(\rho  + R)} \xi_{\M}$.
    Let $\eta > 0$ be arbitrary and fix $s > 0$ such that $e^{-2s} \eta < 0.5 \xi_1$. We set $O_{\eta}(N) := \inte (F_{\xi_1}) a_s^{-1}$, which is an open neighborhood of $F_{\eta}(N)$ since
    $$
    \op{int}(F_{\xi_1})a_{s}^{-1}\supset F_{0.5\xi_1}a_{s}^{-1}\supset F_{0.5\xi_1}(N)a_{s}^{-1}=F_{0.5e^{2s}\xi_1}(N)\supset F_{\eta}(N).
    $$
    Moreover, $\RFM - O_{\eta}(N) = ( \RFM - \inte(F_{\xi_1})) a_s^{-1}$ is compact.

    To see the thickness, note that for $u \in U$,
    $$\begin{aligned}
    x_n u \in \RFM - O_{\eta}(N) & \Leftrightarrow x_n u \in \RFM - \inte(F_{\xi_1}) a_s^{-1} \\
    & \Leftrightarrow (x_n a_s) (a_s^{-1} u a_s) \in \RFM - \inte(F_{\xi_1}) \\
    & \Leftrightarrow a_s^{-1} u a_s \in \T_{W_{\rho + R}}(x_n a_s) 
    \end{aligned}
    $$
    since $W_{\rho + R} = \RFM - \inte(F_{\xi_1})$.
    Observing that $a_s^{-1} \begin{pmatrix} 1 & t \\ 0 & 1 \end{pmatrix} a_s = \begin{pmatrix} 1 & t e^{-s} \\ 0 & 1 \end{pmatrix}$ for all $t \in \R$, it suffices to show that $\T_{W_{\rho + R}}(x_n a_s)$ is $4k_{\M}$-thick.

    On the other hand, since $yA$ is contained in the open subset  $\inte(W_{\rho}) \subset \RFM$, it follows from $x_n a_s \to ya_s$ that  $x_na_s \in W_{\rho}$ for all large $n \ge 1$. Therefore, $\T_{W_{\rho + R}}(x_n a_s)$ is $4k_{\M}$-thick by Proposition \ref{prop.thickness}(1). This completes the proof.
\end{proof}

Recall that $v_t = \begin{pmatrix} 1 & it \\ 0 & 1 \end{pmatrix} \in V$ for $t \in \R$. 

\begin{lemma} \label{lem.etar}
    Let $x\in\RFPM$ and $X=\ov{xU}$. 
    Let $y \in \RFM$ be such that $yH$ is closed.
    Let  $I \subset \R$ be a compact subset.
    Suppose that for any $\eta>0$, there exist $t_\eta\in I$ and $y_\eta\in yH\cap\RFPM$ such that $y_\eta v_{t_\eta}\in X-F_\eta(N)$.
    Then there exists $T_I \in I$ such that 
    $$
    (yH\cap\RFPM)v_{T_I}\subset X.
    $$
\end{lemma}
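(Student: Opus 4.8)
The plan is to pick a sequence $\eta_n\to\infty$, apply the hypothesis to obtain points $t_n:=t_{\eta_n}\in I$ and $y_n:=y_{\eta_n}\in yH\cap\RFPM$ with $y_nv_{t_n}\in X-F_{\eta_n}(N)$, and to take $T_I$ to be any subsequential limit of $(t_n)$ in the compact set $I$. The key point will be that the orbit closure $\ov{y_nU}$ either already equals the whole surface $yH\cap\RFPM$, or at least fills it up in the limit; granting this, translating by $v_{t_n}\to v_{T_I}$ and using that $X$ is closed and $U$-invariant will force $(yH\cap\RFPM)v_{T_I}\subset X$.

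First I would pass to a subsequence so that $t_n\to T_I$ for some $T_I\in I$. Writing $y_n=[g_n]$ with $g_n\in G$, I would then split into two cases according to Corollary \ref{cor.uinsurface}: either $g_n^+$ is a conical limit point of $\Ga$ for infinitely many $n$, or $g_n^+$ is a parabolic limit point of $\Ga$ for all but finitely many $n$; passing to a further subsequence (still with $t_n\to T_I$) I may assume one of these holds for every $n$. In the conical case, Corollary \ref{cor.uinsurface} gives $\ov{y_nU}=yH\cap\RFPM$ for every $n$, so here $\limsup_{n\to\infty}\ov{y_nU}=yH\cap\RFPM$ trivially.

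In the parabolic case, Corollary \ref{cor.uinsurface} instead gives that $y_nU$ is compact for every $n$. Since $F_{\eta_n}(N)$ is invariant under right multiplication by $N$---this follows at once from its definition and \eqref{eq.scale} with $t=0$---and $v_{t_n}\in V<N$, the hypothesis $y_nv_{t_n}\notin F_{\eta_n}(N)$ forces $y_n\notin F_{\eta_n}(N)$. Thus $y_n\in yH$ satisfies $y_nU$ compact, $y_n\notin F_{\eta_n}(N)$, and $\eta_n\to\infty$, with $yH$ closed, so Lemma \ref{lem.OS} applies and yields $\limsup_{n\to\infty}y_nU=yH\cap\RFPM$. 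In particular $\limsup_{n\to\infty}\ov{y_nU}\supset yH\cap\RFPM$ in this case as well.

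It then remains to conclude. Fix $z\in yH\cap\RFPM$; by the above, $z$ lies in $\limsup_{n\to\infty}\ov{y_nU}$, so there exist $n_k\to\infty$ and $z_k\in\ov{y_{n_k}U}$ with $z_k\to z$. Since $v_{t_{n_k}}\in V$ commutes with $U$, right translation by $v_{t_{n_k}}$ is a homeomorphism commuting with closure, hence $z_kv_{t_{n_k}}\in\ov{y_{n_k}U}\,v_{t_{n_k}}=\ov{y_{n_k}v_{t_{n_k}}U}\subset X$, where the last inclusion uses $y_{n_k}v_{t_{n_k}}\in X$ together with $X$ being closed and $U$-invariant. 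Letting $k\to\infty$ and using $t_{n_k}\to T_I$, continuity gives $z_kv_{t_{n_k}}\to zv_{T_I}$, so $zv_{T_I}\in X$. As $z\in yH\cap\RFPM$ was arbitrary, $(yH\cap\RFPM)v_{T_I}\subset X$. The step I expect to require the most care is the case split together with the transfer of the deep-cusp condition from $y_nv_{t_n}$ to $y_n$, which is what makes the expansion Lemma \ref{lem.OS} applicable; everything else reduces to routine manipulations of topological $\limsup$'s and the commutation of $U$ and $V$.
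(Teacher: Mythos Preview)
Your proposal is correct and follows essentially the same approach as the paper's proof: both reduce to Corollary~\ref{cor.uinsurface} for the case split and to Lemma~\ref{lem.OS} for the expansion in the compact-$U$-orbit case, using the $N$-invariance of $F_\eta(N)$ to transfer the condition from $y_nv_{t_n}$ to $y_n$. The only cosmetic difference is that the paper disposes of the conical case before choosing a sequence (if $\ov{y_{\eta_0}U}=yH\cap\RFPM$ for a single $\eta_0$, it sets $T_I:=t_{\eta_0}$ immediately), whereas you choose a convergent sequence $t_n\to T_I$ first and then run both cases uniformly through the $\limsup$; your route is slightly less direct in the conical case but equally valid.
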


\begin{proof}
Since $U$ and $V$ commute and $X$ is $U$-invariant, $\ov{y_{\eta}U}v_{t_{\eta}}\subset X$ for all $\eta > 0$. Hence, if  $\ov{y_{\eta_0}U}=yH\cap\RFPM$ for some $\eta_0>0$, setting $T_I:=t_{\eta_0}$ verifies the claim.

    We now assume that $\ov{y_{\eta} U} \neq yH \cap \RFPM$ for all $\eta > 0$. 
    By Corollary \ref{cor.uinsurface}, $\ov{y_{\eta} U} \neq yH \cap \RFPM$ implies that $y_{\eta} U$ is compact for all $\eta > 0$.
    Since $I$ is compact, there exists a sequence $\eta_n > 0$ such that $\eta_n \to \infty$ and $t_n := t_{\eta_n} \in I$ converges to some $T_I \in I$ as $n \to \infty$. 
    Since $y_{\eta_n} v_{t_n}\not\in F_\eta(N)$, or equivalently $y_{\eta_n} \not\in F_\eta(N)$, we have
    $$
    \limsup_{n \to \infty} y_{\eta_n} U =yH\cap \RFPM
    $$
    by Lemma \ref{lem.OS}.  Since $v_{t_n} \to v_{T_I}$ as $n \to \infty$, this implies
    $$
    \limsup_{n \to \infty} y_{\eta_n} v_{t_n} U = \limsup_{n \to \infty} y_{\eta_n} U v_{t_n} = (yH \cap \RFPM) v_{T_I}.
    $$
    Since $X$ is a closed $U$-invariant set and $y_{\eta_n} v_{t_n} \in X$ for all $n \ge 1$, it follows that $$(yH \cap \RFPM) v_{T_I} \subset X,$$
    as desired.
\end{proof}

We now classify the $U$-orbit closure containing a closed $H$-orbit outside of $\BFM$.

\begin{proposition}\label{prop.yh1}
    Let $x\in\RFM$ and $X=\ov{xU}$.
    Suppose that there exists $y \in \RFM - \BFM$ such that $yH$ is closed and $yH\cap\RFPM\subset X$.
    Then either 
    $$X=yH\cap\RFPM \quad \text{or} \quad X=\RFPM.$$
\end{proposition}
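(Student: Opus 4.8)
The plan is to show that if $X\neq yH\cap\RFPM$ then $X=\RFPM$; so assume $yH\cap\RFPM\subsetneq X$. Since $U<H$, the set $yH$ is $U$-invariant, so $xU\cap yH=\emptyset$ (otherwise $xU\subset yH$, which with $X\subset\RFPM$ would force $X=yH\cap\RFPM$). Writing $y=[g]$, the group $H\cap\Ga^{g}$ is non-elementary by Proposition \ref{prop.noneltgf}, hence contains a hyperbolic element; I would fix $z\in yH\cap\RFM$ lying on the corresponding closed geodesic of $(H\cap\Ga^{g})\ba\H^2$, so that $zA$ is compact. Then $z\in yH\cap\RFM\subset X$, $z^+$ is conical, $\ov{zU}=yH\cap\RFPM$ by Corollary \ref{cor.uinsurface}, and $z\notin\BFM$ (as $\BFM$ is $H$-invariant and $y\notin\BFM$). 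Choosing $p_m\in xU$ with $p_m\to z$, apply Proposition \ref{prop.intorfm}(1) to replace $p_m$ by $p_mu_m'\in\RFM$ with $u_m'\to e$; relabeling gives $w_n\in xU\cap\RFM$ with $w_n\to z$, hence $w_n=zg_n$ with $g_n\to e$ in $G$ and $g_n\notin H$ (since $w_n\notin yH$).

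\textbf{Excluding $g_n\in HV$.} Along a subsequence with $g_n\in HV$, write $g_n=h_nv_n$, $v_n\in V\setminus\{e\}$, $v_n\to e$, so that $X=\ov{zh_nU}\,v_n$. Since $zh_n\in yH\cap\RFPM$, Corollary \ref{cor.uinsurface} gives $\ov{zh_nU}=yH\cap\RFPM$ (when $w_n^{+}=(zh_n)^+$ is conical) or else a compact $U$-orbit. The compact case makes $X$ a single compact $U$-orbit, impossible since $X\supsetneq yH\cap\RFPM$, which is $AU$-invariant (Corollary \ref{cor.auminimal}) and hence not contained in any $1$-dimensional $U$-orbit. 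In the conical case $X=(yH\cap\RFPM)v_n$ for all large $n$; since $yH\cap\RFPM$ is closed and $v_n\to e$, the right side has topological $\limsup$ equal to $yH\cap\RFPM$, forcing $X=yH\cap\RFPM$, a contradiction. Hence $g_n\notin HV$ for all large $n$, and I pass to such a subsequence.

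\textbf{Blow-up with recurrence at all depths.} Fix $\eta>0$. As $zA$ is compact and $w_n\to z$, Lemma \ref{lem.AO} gives a neighborhood $O_\eta(N)\supset F_\eta(N)$ with $\RFM-O_\eta(N)$ compact such that $T_n^{(\eta)}:=\T_{\RFM-O_\eta(N)}(w_n)$ is $4k_{\M}$-thick for all large $n$. Applying Lemma \ref{lem.seqrecurrence} to $g_n\in G-HV$ and $\{T_n^{(\eta)}\}$ yields $k'>1$ (depending only on $\M$) and a $k'$-thick $V_0^{(\eta)}\subset V$ with $V_0^{(\eta)}\subset\limsup_n Hg_nT_n^{(\eta)}$. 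For $v\in V_0^{(\eta)}$, choosing $h_n\in H$ and $u_n\in T_n^{(\eta)}$ with $h_ng_nu_n\to v$, the identity $w_nu_n=(zh_n^{-1})(h_ng_nu_n)$ shows $w_nu_n\in X\cap(\RFM-O_\eta(N))$; extracting a convergent subsequence $w_nu_n\to w$ forces $zh_n^{-1}\to wv^{-1}\in yH$ (closed), so $w\in(yH\cap\RFPM)v$. Thus for each $v\in V_0^{(\eta)}$ there is $y_v\in yH\cap\RFPM$ with $y_vv\in X-F_\eta(N)$. I expect this to be the main obstacle: a point $z\in yH\cap\RFM$ alone would only yield recurrence to a single compact piece, whereas the $y_v$ produced may sit on a compact $U$-orbit at bounded cusp-depth; this is exactly why one chooses $z$ on a closed geodesic (so $zA$ is compact) and uses Lemma \ref{lem.AO}, whose conclusion is uniform in $\eta$.

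\textbf{Conclusion via Lemmas \ref{lem.etar} and \ref{lem.yvn}.} Fix $s>0$; by $k'$-thickness of $V_0^{(\eta)}$, pick $t_\eta$ with $v_{t_\eta}\in V_0^{(\eta)}$ and $s\le|t_\eta|\le k's$, obtaining $y_\eta\in yH\cap\RFPM$ with $y_\eta v_{t_\eta}\in X-F_\eta(N)$ and $t_\eta$ in the fixed compact set $[-k's,-s]\cup[s,k's]$. Since $\eta>0$ is arbitrary, Lemma \ref{lem.etar} (which internally absorbs the compact-$U$-orbit case via the expansion of Lemma \ref{lem.OS}) yields $T_s$ with $|T_s|\ge s$ and $(yH\cap\RFPM)v_{T_s}=yHv_{T_s}\cap\RFPM\subset X$. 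Letting $s\to\infty$ gives a sequence $v_{T_s}\to\infty$ in $V$; moreover $y\notin\BFM$ gives $y\notin\BFM\cdot V$ by Lemma \ref{lem.closedH}, so $yv_{T_s}\in\RFM\cdot U$ by Lemma \ref{lem.zv}, i.e. $yHv_{T_s}\cap\RFM\cdot U\neq\emptyset$. Lemma \ref{lem.yvn} then gives $\RFPM=\limsup_s(yHv_{T_s}\cap\RFPM)\subset X$, and since $X=\ov{xU}\subset\RFPM$ we conclude $X=\RFPM$.
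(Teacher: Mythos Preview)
Your proof is correct and follows essentially the same approach as the paper's: reduce to a point with compact $A$-orbit, approximate by points of $xU\cap\RFM$ via Proposition~\ref{prop.intorfm}(1), dispose of the $HV$ case, then for $g_n\notin HV$ combine Lemma~\ref{lem.AO} with Lemma~\ref{lem.seqrecurrence} to produce, for each $\eta>0$ and each scale $s$, a point $y_{\eta}v_{t_\eta}\in X-F_\eta(N)$ with $t_\eta$ in a fixed annulus, feed this into Lemma~\ref{lem.etar}, and finish with Lemma~\ref{lem.yvn}. The only cosmetic differences are that you set up the argument as a proof by contradiction and give a slightly more explicit treatment of the $HV$ case (using $v_n\to e$ to force $X=yH\cap\RFPM$, and ruling out the compact-$U$-orbit sub-case by size), whereas the paper replaces $y$ itself by a point with compact $A$-orbit and asserts the $HV$ conclusion more tersely.
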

\begin{proof}
    By Proposition \ref{prop.noneltgf}, there exists a compact $A$-orbit in $yH$. Hence, by replacing $y$ with an element of $yH$, we may assume that $yA$ is compact. Note that we still have $y \in \RFM - \BFM$ after the replacement due to Lemma \ref{lem.closedH}.
    
    Since $y \in X$, there exists a sequence $x_n\in xU$ such that $x_n\to y$ as $n\to\infty$.
    Since $y\not\in\BFM$, by Proposition \ref{prop.intorfm}(1), we may further assume that $x_n\in\RFM$ by modifying $x_n\in xU$, without changing $y$.
    
    We next write $x_n=yg_n$ for some sequence $g_n \in G$ such that $g_n\to e$ as $n\to\infty$. After passing to a subsequence, $g_n \in HV$ for all $n \ge 1$ or $g_n \notin HV$ for all $n \ge 1$. Suppose first that $g_n \in HV$ for all $n \ge 1$. For each $n \ge 1$, write $g_n=h_nv_n$ for some $h_n \in H$ and $v_n \in V$. We then have 
    $$X =\ov{x_nU}=\ov{yh_nU}v_n \quad \text{for all } n \ge 1.$$
    Note that $yh_n\in yH\cap\RFPM$ and hence either $yh_nU$ is compact or $\ov{yh_nU}=yH\cap\RFPM$ by Corollary \ref{cor.uinsurface}.
    Since $yH\cap\RFPM\subset X$ by hypothesis, necessarily $v_n=e$ and $X=yH\cap\RFPM$ in this case.
    
    Now consider the case that $g_n\not\in HV$ for all $n \ge 1$.
    Let $k_{\M} > 1$ be given in Proposition \ref{prop.tt} and $k' > 1$ the constant given in Lemma \ref{lem.seqrecurrence} but associated to $k = 4k_{\M}$ instead of $k_{\M}$.

    \medskip

    We claim that for every $\eta>0$ and $r>0$,
    $$\begin{matrix}
    \exists t_{\eta,r}\in [-k'r, -r] \cup [r,k'r] \quad \text{and}  \quad y_{\eta,r}\in yH\cap\RFPM \\
    \\
    \text{such that } y_{\eta,r}v_{t_{\eta,r}}\in X-F_\eta(N).
    \end{matrix}$$
    Let us verify the claim.
    Fix arbitrary $\eta$, $r>0$ and let $O_\eta(N)$ be the open set in Lemma \ref{lem.AO} associated to $\eta>0$.
    Then $T_n := \mathsf{T}_{\RFM-O_{\eta}(N)}(x_n)$
    is $4k_{\M}$-thick for all sufficiently large $n \ge 1$ by Lemma \ref{lem.AO}.
    Next, apply Lemma \ref{lem.seqrecurrence} to obtain a $k'$-thick set $V_0\subset V$ with $k'=k'(4k_{\M})$ such that $$V_0 \subset \limsup_{n \to \infty} H g_n T_n.$$
    Since $V_0$ is $k'$-thick, we can find $t_{\eta,r}\in [-k' r, -r] \cup [r,k'r]$ such that $v_{t_{\eta,r}}\in V_0$. Hence, there exist sequences $h_n\in H$, $u_n \in T_n$ such that $h_ng_nu_{n}\to v_{t_{\eta,r}}$ as $n\to\infty$. 
    Note that
    $$
    (yh_n^{-1})(h_ng_nu_{n})=x_nu_{n}\in X\cap \RFM-O_\eta(N) \quad \text{for all } n \ge 1.   
    $$
    Since $\RFM-O_\eta(N)$ is compact, $x_nu_{n}$ is convergent after passing to a subsequence, and so is $yh_n^{-1}$, to some element $y_{\eta,r}\in yH$.
    Since $O_\eta(N)$ is a neighborhood of $F_\eta(N)$, taking the limit $n\to\infty$, we obtain $y_{\eta,r}v_{t_{\eta,r}}\in X-F_\eta(N)$.
    Since $X\subset\RFPM$, we have $y_{\eta,r}\in\RFPM$ and the claim follows.

        \medskip

    Fixing $r>0$ and by varying $\eta>0$, we apply Lemma \ref{lem.etar} to the compact set $[-k'r, -r] \cup [r, k'r]$.
    The previous claim and Lemma \ref{lem.etar} imply that there exists $T_{r}\in [-k'r, -r] \cup [r, k'r]$ such that 
    \be \label{eqn.Tr}
    (yH\cap\RFPM)v_{T_{r}}\subset X
    \ee
    Repeating this for increasing values of $r>0$, we obtain a sequence $|T_{r}| \to\infty$.

    Since $y \notin \BFM$, we in particular have $y \notin \BFM \cdot V$ by Lemma \ref{lem.closedH}. This implies that
    $
    y v_{T_r} \notin \BFM \cdot V$ for all $ r > 0$.
    Since $y \in \RFM$, we have $y v_{T_r} \in \RFPM$, and hence $y v_{T_r} \in \RFM \cdot U$
    by Lemma \ref{lem.zv}. In particular, $$y H v_{T_r} \cap \RFM \cdot U \neq \emptyset \quad \text{for all } r > 0.$$
    Since $v_{T_r} \to \infty$ as $r \to \infty$, it follows from Lemma \ref{lem.yvn} that 
    $$
    \limsup_{r \to \infty} (y H v_{T_r} \cap \RFPM) = \RFPM.
    $$
    Together with \eqref{eqn.Tr}, $X = \RFPM$. This finishes the proof.
    \end{proof}

    \subsection*{Proof of Theorem \ref{thm.xwh}} Let $x \in \RFM$ and $X = \ov{xU}$. Suppose that there exists $y \in \RFM$ such that $yH$ is closed and $yH \cap \RFPM \subset X$. If $y \in \RFM - \BFM$, then the claim follows from Proposition \ref{prop.yh1}. If $y \in \BFM$, then $X = yH$ or $X = \RFPM$ by Proposition \ref{prop.yh2}. Since $yH \subset \BFM \subset \RFM$ in this case, this completes the proof.
    \qed


\section{A $U$-orbit closure meeting a compact $N$-orbit}\label{sec.xwn}

Let $\Ga < G$ be a geometrically finite Kleinian group with a round Sierpi\'nski limit set and $\M = \Ga \ba \H^3$.
The goal of the section is to prove the following dichotomy for $U$-orbit closures meeting compact $N$-orbits.

\begin{theorem}\label{thm.clu2}
    Let $x\in\RFPM$ and $X := \ov{xU}$.
    Suppose that $X$ meets a compact $N$-orbit.
    Then one of the following holds:
    \begin{enumerate}
        \item $xN$ is compact.
        \item $X$ contains a $vAUv^{-1}$-orbit for some $v\in V$.
    \end{enumerate} 
\end{theorem}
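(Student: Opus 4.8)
The plan is to peel off the easy configurations first and then, in the remaining core case, to scatter the compact orbit that lies inside $X$ by means of the unipotent blowup lemmas, pushing the blown-up pieces back into $X$ and applying the expansion of horospheres.

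\textbf{Reductions.} If $xN$ is compact we are in case (1), so assume it is not; then $x^+$ is a conical limit point by Lemma~\ref{lem.ferte}. By Lemma~\ref{lem.zv}, either $x\in\RFM\cdot U$, or $x\in\BFM\cdot V$; in the latter case $x=zv$ with $z\in\BFM$ and $v\in V$, so by Corollary~\ref{cor.uinsurface} $X=\ov{xU}=\ov{zU}\,v=zHv$, which contains $z(AU)v=(zv)\bigl(v^{-1}(AU)v\bigr)$, a $v^{-1}AU(v^{-1})^{-1}$-orbit, giving (2). Hence assume $x\in\RFM$ (replacing $x$ within its $U$-orbit). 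Pick $y_0\in X$ with $y_0N$ compact; then $\ov{y_0U}\subset X$. Applying the same dichotomy to $y_0$: if $y_0\in\BFM\cdot V$ then $\ov{y_0U}$ is again of the form $zHv$ and contains a $v'AU(v')^{-1}$-orbit, giving (2); otherwise $y_0\in\RFM\cdot U$, and replacing $y_0$ within its $U$-orbit we may assume $y_0\in\RFM-\BFM$. Then Proposition~\ref{prop.intorfm}(1) lets us modify the $u_n$ so that $xu_n\in\RFM$ and $xu_n\to y_0$; write $xu_n=y_0g_n$ with $g_n\to e$ in $G$.

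\textbf{Core argument.} Since $(y_0g_n)^+=x^+$ is conical while $y_0^+$ is parabolic, $g_n(\infty)\ne\infty$, i.e. $g_n\notin MAN=\op{N}_G(N)$, and in particular $g_n\notin\op{N}_G(U)$. Put $C:=\ov{y_0U}\subset X$; since $y_0U\subset y_0N$ and $y_0N$ is compact, $C$ is compact and equals either the closed horocycle $y_0U$ or the compact orbit $y_0N$. I would now run the unipotent blowup of $g_n$ (Lemma~\ref{lem.qr2}, together with Proposition~\ref{prop.thickness} for the recurrence needed to keep orbits in a fixed compact part of $\RFM$) in order to place, inside the closed $U$-invariant set $X$, one of the following: (i) compact $N$-orbits of volume $\to\infty$ — then by Proposition~\ref{prop.classifyhoro} the closure of their union is an $N$-invariant closed subset of $X$ not contained in any $F_\eta(N)$, hence equal to $\RFPM$, so $X=\RFPM$ and (2) holds; (ii) a configuration $y\Phi(t)\subset X\cap(\RFM\cdot U)$ for all large $t$, with $\Phi(t)=d(\sigma(t))v(\nu(t))$ and $\sigma$ non-constant — then $\limsup_{t\to\infty}yU\Phi(t)=\limsup_{t\to\infty}y\Phi(t)U\subset X$, and Proposition~\ref{prop.sigma2} exhibits a $vAUv^{-1}$-orbit inside it, so (2) holds; or (iii) a point of $\BFM\cdot V$ lying in $X$, whose $U$-orbit closure is again of the form $zHv$ and contains a conjugate $AU$-orbit, so (2) holds.

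\textbf{Main obstacle.} The delicate point is that $xu_n=y_0g_n$ places only the single coset $y_0g_n$ — that is, only $xU$ — inside $X$, not the product $y_0\,C\,g_nU$; so the raw output of the blowup (e.g. $d(\sigma(t))\in\limsup_n Ng_nU$ or $d(\sigma(t))v(\nu(t))\in\limsup_n Ug_nU$) cannot be transported into $X$ by a naive ``within $o(1)$ of $X$'' estimate: conjugating the error term past a large element of $U$ distorts distances, and if that conjugate did converge its limit would be forced into $N$ by the determinant, killing the geodesic direction. The transport must instead be carried out along the orbit $xU$ itself, exploiting that $C=\ov{y_0U}$ is compact with $CU=C$, that $X$ is closed and $U$-invariant, and the thick recurrence of Proposition~\ref{prop.thickness}, in the spirit of the Dani--Margulis and Shah arguments. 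A secondary issue is whether the blowup of $g_n$ sees a nontrivial geodesic ($A$-)direction — which yields (i) or (ii) directly — or only a horospherical ($V$-)direction; in the latter case one must first deduce $C=y_0N$, i.e. that the entire compact horosphere lies in $X$, and then re-scatter. I expect this transport step, together with the geodesic-versus-horospherical dichotomy, to be the technical heart of the proof.
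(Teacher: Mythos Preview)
Your overall strategy and case breakdown match the paper's, but you have misidentified the ``main obstacle''. The transport step you worry about is in fact immediate from the compactness of $y_0N$, and this is precisely Lemma~\ref{lem.acc1}. Concretely: if $\Phi(t)=d(\sigma(t))v(\nu(t))\in\limsup_n Ug_nU$, pick $\widehat u_n,u_n'\in U$ with $\widehat u_n g_n u_n'\to\Phi(t)$; then
\[
x_nu_n'\;=\;y_0g_nu_n'\;=\;\bigl(y_0\widehat u_n^{\,-1}\bigr)\cdot\bigl(\widehat u_n g_nu_n'\bigr)\in X,
\]
and the first factor lies in the compact set $\ov{y_0U}\subset y_0N$, so it subconverges to some $y_t\in\ov{y_0U}$, giving $y_t\Phi(t)\in X$ and hence $\ov{y_0U}\Phi(t)\subset X$ (using that $\ov{y_0U}$ is $U$-minimal inside the torus $y_0N$, and $\Phi(t)\in\op{N}_G(U)$). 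There is no ``conjugation past a large element'': the factorisation already isolates the compact piece. The same trick with $N$ in place of $U$ gives Lemma~\ref{lem.acc2}, producing $y_td(\sigma(t))\in X$ with $y_t\in y_0N$ and $\sigma\in\C[t]$ non-constant.

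What you flag as secondary --- the geodesic-versus-horospherical dichotomy --- is the actual organising principle, and the paper packages it as Corollary~\ref{cor.clu1}: either some compact $N$-orbit meeting $X$ is \emph{not} contained in $X$, in which case the $U$-blowup above gives $\ov{y_0U}\Phi(t)\subset X$ with $\sigma$ necessarily non-constant (if $\sigma\equiv1$ then $\Phi(t)\in V$ and Corollary~\ref{cor.blowup2} forces $y_0N\subset X$, contradiction), and Proposition~\ref{prop.sigma2} or the $\BFM\cdot V$ alternative finishes; or \emph{every} compact $N$-orbit meeting $X$ lies in $X$, in which case your ``re-scatter'' is the $N$-blowup of Lemma~\ref{lem.acc2}, giving $y_0Nd(\sigma(t))\subset X$ with $|\sigma(t)|\to\infty$, whence $X=\RFPM$ by Lemma~\ref{lem.eqd}. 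Your extra reductions (forcing $x,y_0\in\RFM$ via Proposition~\ref{prop.intorfm}, and the nice observation $g_n\notin MAN$ from conical-versus-parabolic) are correct but unnecessary: Lemmas~\ref{lem.acc1} and~\ref{lem.acc2} handle the degenerate cases $g_n\in\op{N}_G(U)$ or $g_n\in\op{N}_G(N)$ internally, and never require $x_n\in\RFM$.
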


The proof is based on unipotent blowup involving polynomials. Recall that for $z \in \C - \{0\}$ and $s \in \R$,
$$d(z) = \begin{pmatrix}
z^{-1} & 0 \\
0 & z
\end{pmatrix} \in AM \quad \mbox{and} \quad
v(s) = 
\begin{pmatrix}
  1 & is \\
  0 & 1
\end{pmatrix} \in V.$$

\begin{lemma}\label{lem.acc1}
    Let $x\in\RFPM$ and $X := \ov{xU}$.
    Suppose that there exists $y \in X$ such that $yN$ is compact.
    Then one of the following holds:
    \begin{enumerate}
        \item $xN$ is compact.
        \item there exist polynomials $\sigma, \nu \in \R[t]$ such that at least one of them is non-constant, $\sigma(0) = 1$, $\nu(0) = 0$, and
        $$
            \ov{yU}\Phi(t)\subset X \quad \text{for all } t \in \R \text{ s.t. } \sigma(t) \neq 0
        $$
        where $\Phi(t) = d(\sigma(t)) v(\nu(t))$.
    \end{enumerate}
\end{lemma}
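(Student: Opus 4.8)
The plan is to run the polynomial unipotent blowup (Lemma~\ref{lem.qr2}(1)) along a sequence witnessing $y\in\ov{xU}$, after isolating the degenerate case in which that sequence stays inside $\op{N}_G(U)$ --- which is exactly what forces alternative~(1). So assume we are not in case~(1), i.e.\ $xN$ is not compact. Since $y\in X=\ov{xU}$, choose $u_n\in U$ with $xu_n\to y$ and write $xu_n=yg_n$ in $\Ga\ba G$ with $g_n\to e$ in $G$; thus $yg_n\in xU$, and hence $yg_nU=xU$, for every $n$. First I would show that, after passing to a subsequence, $g_n\notin\op{N}_G(U)$ for all $n$. Indeed, $\op{N}_G(U)$ consists of upper-triangular matrices (it normalizes the fixed point $\infty\in\widehat\C$ of $U$), and its identity component is $AN$; so if $g_n\in\op{N}_G(U)$ for infinitely many $n$, then (as $g_n\to e$) $g_n\in AN$ for such large $n$, whence $x\in yg_nU\subset yAN$. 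Writing $x=ya_tn_z$ and using $a_tN=Na_t$ gives $xN=yNa_t$, which is compact since $yN$ is, contradicting our assumption. Hence from now on $g_n\in G-\op{N}_G(U)$.

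Now Lemma~\ref{lem.qr2}(1) applied to $g_n\to e$ produces polynomials $\sigma,\nu\in\R[t]$, at least one non-constant, with $\sigma(0)=1$, $\nu(0)=0$, such that $\Phi(t)=d(\sigma(t))v(\nu(t))\in\limsup_{n\to\infty}Ug_nU$ whenever $\sigma(t)\neq0$. It remains to propagate the curve $\{\Phi(t)\}$ into $X$. Fix $t$ with $\sigma(t)\neq0$; since $\Phi(t)\in AV\subset\op{N}_G(U)$ we have $\ov{yU}\,\Phi(t)=\ov{y\Phi(t)U}$, so, $X$ being closed and $U$-invariant, it suffices to show $z\Phi(t)\in X$ for each $z\in\ov{yU}$. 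For this I would pick $p_k,q_k\in U$ and $n_k\to\infty$ with $p_kg_{n_k}q_k\to\Phi(t)$; since $yN$ is compact, $\ov{yU}\subset yN$ is compact and contained in $X$, so along a subsequence $zp_k\to z'\in\ov{yU}\subset X$. Combining this with $g_{n_k}\to e$, the identity $yg_nU=xU$, and the $U$-invariance of $X$, the standard Dani--Margulis/Shah limiting argument (cf.\ \cite{Dani1989,Shah1992master,McMullen2016horocycles}) then yields a sequence of points of $X$ converging to $z\Phi(t)$, hence $z\Phi(t)\in X$; this gives $\ov{yU}\,\Phi(t)\subset X$, which is alternative~(2).

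The step I expect to be the main obstacle is exactly this last propagation: the left $U$-factor $p_k$ appearing in $\limsup_n Ug_nU$ typically diverges (the blowup is genuinely non-trivial), and one must use the compactness of $\ov{yU}$ --- available precisely because $yN$ is compact --- to control those diverging translates while keeping the relevant orbit points inside the closed $U$-invariant set $X$. The reduction to $g_n\notin\op{N}_G(U)$, by contrast, is elementary, and it is this reduction that produces the dichotomy in the statement: staying inside $\op{N}_G(U)$ is what makes $xN$ compact.
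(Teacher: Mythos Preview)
Your approach is the same as the paper's, and the reduction to $g_n\notin\op{N}_G(U)$ is fine (the paper argues slightly differently --- from $g_n\in\op{N}_G(U)$ it writes $x=yu_ng_n$ with $u_n\in U$, uses compactness of $yN$ to pass to a limit $z\in yN$, and concludes $x=z$ directly --- but your $AN$ argument is equally valid).

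The gap is in your propagation step. You fix an \emph{arbitrary} $z\in\ov{yU}$ and claim the limiting argument produces points of $X$ converging to $z\Phi(t)$. It does not. Concretely: from $p_kg_{n_k}q_k\to\Phi(t)$ you would need $zp_kg_{n_k}q_k\in X$, but $zp_k\in\ov{yU}$ and there is no reason $(\ov{yU})g_{n_k}\subset X$ --- we only know $yg_{n_k}\in xU\subset X$, and since $g_{n_k}\notin\op{N}_G(U)$ you cannot slide the $U$-factor past $g_{n_k}$. What the argument actually gives is
\[
y\,p_k^{-1}\cdot(p_kg_{n_k}q_k)=yg_{n_k}q_k\in xU\subset X,
\]
and by compactness of $\ov{yU}\subset yN$ a subsequence of $yp_k^{-1}$ converges to some $y_t\in\ov{yU}$; hence $y_t\Phi(t)\in X$ for \emph{one} $y_t$ depending on $t$, not for every $z$.

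The missing idea, which the paper uses explicitly, is that $\ov{yU}$ is $U$-\emph{minimal}: since $yN$ is a compact $2$-torus and $U$ acts by a one-parameter translation, the orbit closure $\ov{yU}$ is a subtorus on which $U$ acts minimally, so $\ov{y_tU}=\ov{yU}$. Combined with $\Phi(t)\in AV\subset\op{N}_G(U)$ this gives
\[
\ov{yU}\,\Phi(t)=\ov{y_tU}\,\Phi(t)=\ov{y_t\Phi(t)U}\subset X,
\]
which is the desired conclusion. So your outline is correct once you replace ``for each $z\in\ov{yU}$'' by ``for some $y_t\in\ov{yU}$, then invoke $U$-minimality''.
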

\begin{proof}
    Since $y\in X$, there exists a sequence $x_n\in xU$ such that $x_n\to y$ as $n\to\infty$. We may write $x_n=yg_n$ for some sequence $g_n\to e$ in $G$. After passing to a subsequence, we have either $g_n \in \op{N}_G(U)$ for all $n \ge 1$ or $g_n \notin \op{N}_G(U)$ for all $n \ge 1$.

    Suppose first that $g_n \in \op{N}_G(U)$ for all $n \ge 1$. Then for each $n \ge 1$,
    $$
    xU = y g_n U = yU g_n,
    $$
    and hence there exists $u_n \in U$ such that $x = y u_n g_n$.
    Since $y u_n \in yN$ and $yN$ is compact, after passing to a subsequence, we may assume that $y u_n$ converges to some $z \in yN$. It then follows from $g_n \to e$ that $x = z$, and therefore
    $$xN = zN = yN \text{ is compact.}$$

    Now assume that $g_n\not\in\op{N}_G(U)$ for all $n \ge 1$.
    Applying Lemma \ref{lem.qr2}(1) with $S=U$,
    we obtain polynomials $\sigma, \nu \in \R[t]$ such that at least one of them is non-constant, $\sigma(0) = 1$, $\nu(0) = 0$, and
        \begin{equation}\label{eq.ugt1}
            \Phi(t) = d(\sigma(t)) v(\nu(t)) \in \limsup_{n \to \infty} U g_n U\quad \text{for all } t \in \R \text{ s.t. } \sigma(t) \neq 0.
        \end{equation}     
    Let $t\in\bb R$ be such that $\sigma(t)\neq 0$.
    By \eqref{eq.ugt1}, there exist sequences $\widehat u_n, u_n' \in U$ such that $\widehat u_n  g_n u_n' \to \Phi(t)$ as $n\to\infty$, after passing to a subsequence.
    Observe that 
    $$
    y \widehat u_n^{-1}( \widehat u_n g_n u_n' )=x_n u_n' \in X \quad \text{for all } n \ge 1.
    $$
    Since $yN$ is compact, passing to a subsequence, there exists $y_t\in \ov{yU}$ such that $y\widehat u_n^{-1}\to y_t$ as $n\to\infty$. This implies
    $$
    y_t \Phi(t) \in X.    
    $$
    Since $\ov{yU}$ is a $U$-minimal subset of a compact $N$-orbit $yN$, it follows from $\Phi(t) \in AV < \op{N}_G(U)$ that
    $$
    \ov{y U} \Phi(t) = \ov{y_t U} \Phi(t) = \ov{y_t \Phi(t) U} \subset X.
    $$
    Therefore, (2) holds.
\end{proof}

As a corollary, we obtain:

\begin{corollary} \label{cor.clu1}
    Let $x\in\RFPM$, and $X=\ov{xU}$. Suppose that $X$ meets a compact $N$-orbit.
    Then one of the following holds:
    \begin{enumerate}
        \item $xN$ is compact.
        \item for any $y \in X$, 
        $$yN \text{ is compact} \Longrightarrow yN \subset X.$$
        \item $X$ contains a $vAUv^{-1}$-orbit for some $v\in V$. 
    \end{enumerate}
   
\end{corollary}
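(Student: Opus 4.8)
The plan is to apply Lemma~\ref{lem.acc1} at each point of $X$ carrying a compact $N$-orbit and to untangle the polynomial dichotomy it produces. If $xN$ is compact we are in case~(1); so assume $xN$ is not compact, and moreover that $X$ contains no $vAUv^{-1}$-orbit (otherwise case~(3) holds). I will show that then case~(2) holds, i.e.\ $yN\subset X$ for every $y\in X$ with $yN$ compact. Fix such a $y$; since $xN$ is not compact, Lemma~\ref{lem.acc1} gives polynomials $\sigma,\nu\in\R[t]$, at least one non-constant, with $\sigma(0)=1$, $\nu(0)=0$, and $\overline{yU}\,\Phi(t)\subset X$ for all $t$ with $\sigma(t)\neq 0$, where $\Phi(t)=d(\sigma(t))v(\nu(t))$.

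The first step is to rule out $\sigma$ non-constant. The key point is that any frame in $\RFPM$ whose $N$-orbit is compact must lie in $\RFM\cdot U$: by Lemma~\ref{lem.zv} it lies in $\RFM\cdot U$ or in $\BFM\cdot V$, and in the latter case, writing it as $wv$ with $w\in\BFM$, $v\in V$, its $N$-orbit is $wN$; but $w^+$ lies on the boundary circle of a complementary disk of $\La$, which (since $\partial\core(\M)$ is compact, so the corresponding boundary plane group is cocompact in $H$) consists entirely of conical limit points, hence of conical limit points of $\Ga$, so $wN$ is not compact by Lemma~\ref{lem.ferte} --- a contradiction. Since $y\Phi(t)\in\RFPM$ and $y\Phi(t)N=y\,d(\sigma(t))N$ is a translate of the compact orbit $yN$, hence compact, it follows that $y\Phi(t)\in\RFM\cdot U$ for all large $t$. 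If $\sigma$ were non-constant, Proposition~\ref{prop.sigma2} would produce $v\in V$ with $\limsup_{t\to\infty}yU\Phi(t)$ containing a $vAUv^{-1}$-orbit; and since $yU\Phi(t)=y\Phi(t)U\subset\overline{yU}\,\Phi(t)\subset X$ for all large $t$ while $X$ is closed, $X$ would contain a $vAUv^{-1}$-orbit, against our assumption. Hence $\sigma\equiv 1$, so $\nu$ is non-constant and $\Phi(t)=v_{\nu(t)}\in V$.

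The second step is to conclude $yN\subset X$ from $\overline{yU}\,v_{\nu(t)}\subset X$ for all $t\in\R$. As $\nu$ is a non-constant real polynomial, $\nu(\R)$ contains a closed half-line $I$. Set $Y_0:=\overline{yU}$, a compact $U$-invariant subset of the compact orbit $yN$, and $Z:=\limsup_{s\in I,\ |s|\to\infty}Y_0 v_s$. Then $Z\subset X$; $Z\neq\emptyset$ because $Y_0$ is compact; $Z$ is $U$-invariant, since each $Y_0 v_s$ is ($U$ and $V$ commute) and the topological $\limsup$ of $U$-invariant sets is $U$-invariant; and $Z$ is $V$-invariant, for if $p=\lim z_n v_{s_n}$ with $z_n\in Y_0$, $s_n\in I$, $|s_n|\to\infty$, then for $r\in\R$ we have $pv_r=\lim z_n v_{s_n+r}$ with $s_n+r\in I$ eventually, so $pv_r\in Z$. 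Thus $Z$ is a non-empty $N$-invariant subset (recall $N=UV$) of the single orbit $yN$ --- note $Z\subset yN$ since $Y_0 v_s\subset yN$ and $yN$ is closed --- hence $Z=yN$, and so $yN=Z\subset X$. As $y$ was arbitrary, case~(2) holds.

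I expect the crux to be ruling out non-constant $\sigma$: it reduces to checking the hypothesis $y\Phi(t)\in\RFM\cdot U$ of Proposition~\ref{prop.sigma2}, i.e.\ that a frame with a compact $N$-orbit never lies in $\BFM\cdot V$, and this is exactly where the special geometry of $\M$ --- the compactness (hence absence of cusps) of $\partial\core(\M)$, combined with the round Sierpi\'nski structure of $\La$ --- is used, through Lemmas~\ref{lem.zv}, \ref{lem.ferte} and~\ref{lem.onlyrank2}.
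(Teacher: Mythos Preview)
Your proof is correct and follows the same skeleton as the paper's: apply Lemma~\ref{lem.acc1} and split on whether $\sigma$ is constant. The paper argues the contrapositive the other way (assuming (2) fails and deducing (3)), but this is cosmetic. Two local substitutions are worth noting.

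First, in the non-constant $\sigma$ case the paper does not rule out $y\Phi(t)\in\BFM\cdot V$; instead it treats it as a separate subcase, observing that $\Phi(t)\in AV$ forces $y=zv^{-1}$ with $z\in\BFM$, whence $\overline{yU}=zHv^{-1}$ already contains a $vAUv^{-1}$-orbit. Your alternative---that no frame with compact $N$-orbit can lie in $\BFM\cdot V$, because boundary circles consist entirely of conical limit points---is a clean geometric shortcut that feeds the hypothesis of Proposition~\ref{prop.sigma2} directly and avoids the extra subcase.

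Second, in the constant $\sigma$ case the paper invokes Corollary~\ref{cor.blowup2} to get invariance under some one-parameter $L<AV$ and then argues $L=V$ from compactness of $yN$. Your half-line argument for $V$-invariance of the $\limsup$ is more elementary and bypasses Corollary~\ref{cor.blowup2} entirely. Both routes arrive at $yN\subset X$; yours is self-contained, while the paper's reuses machinery already set up for other parts of the argument.
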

\begin{proof}
    Assume that (1) and (2) do not hold.
    We will prove that (3) is the case.
    Since (2) does not hold, there exists $y \in X$ such that $yN$ is compact and  $yN\not\subset X$.
    We then apply Lemma \ref{lem.acc1}: Lemma \ref{lem.acc1}(1) cannot occur by our assumption, and hence there exist polynomials $\sigma, \nu \in \R[t]$ such that at least one of them is non-constant, $\sigma(0) = 1$, $\nu(0) = 0$, and
    \be \label{eqn.realpolyapply}
    \ov{yU}\Phi(t)\subset X \quad \text{for all } t \in \R \text{ s.t. } \sigma(t) \neq 0
    \ee
        where $\Phi(t) = d(\sigma(t)) v(\nu(t))$.
    
        \medskip

    We claim that $\sigma$ is non-constant.
    Suppose not and let
    $$
    X_1:=\limsup_{t\to \infty} \ov{yU}\Phi(t) \subset X. 
    $$    Since $\sigma(0) = 1$, $\Phi(t) \in V$ and hence $\ov{yU}\Phi(t) \subset yN$. 
    Since $yN$ is compact, we have $X_1\neq\emptyset$.
    Applying Corollary \ref{cor.blowup2} to $Y:=\ov{yU}$, it follows that $X_1$ is invariant under a one-parameter subgroup $L < AV$. By Lemma \ref{lem.1psg}, either $L = v A v^{-1}$ for some $v \in V$ or $L = V$. Since $X_1 \subset yN$ and $yN$ is compact, we must have $L = V$. Together with the $U$-invariance of $X_1$, we have $$X_1 = yN.$$
    This contradicts $yN \not \subset X$, and the claim follows.

        \medskip

    To finish the proof, suppose first that $y\Phi(t)\in\RFM\cdot U$ for all sufficiently large $t > 0$. Then by Proposition \ref{prop.sigma2}, $\limsup_{t \to \infty} yU \Phi(t)$ contains a $v AU v^{-1}$-orbit for some $v \in V$, and therefore (3) holds due to \eqref{eqn.realpolyapply}.

    Otherwise, $y\Phi(t)\in\BFM\cdot V$ for some $t\in\bb R$ by Lemma \ref{lem.zv}.
    Since $\Phi(t)\in AV$, we have $y=zv^{-1}$ for some $z\in\BFM$ and $v\in V$.
    Since $zH$ is a compact $H$-orbit, we have from Corollary \ref{cor.uinsurface} that
    $$
    \ov{yU} = \ov{zU}v^{-1} = zHv^{-1} = zv^{-1} (v H v^{-1}).
    $$
    Therefore,
    $$
    zv^{-1} (v AU v^{-1}) \subset zv^{-1} (v H v^{-1}) = \ov{yU} \subset X,
    $$
    and hence (3) holds, completing the proof.
\end{proof}

We prove one more lemma.

\begin{lemma}\label{lem.acc2}
    Let $x\in\RFPM$ and $X := \ov{xU}$.
    Suppose that there exists $y \in X$ such that $yN$ is compact.
    Then one of the following holds:
    \begin{enumerate}
        \item $xN$ is compact.
        \item there exists a non-constant polynomial $\sigma\in\bb C[t]$ with $\sigma(0) = 1$ so that
        for any $t\in\bb R$ satisfying $\sigma(t)\neq 0$, there exists $y_t\in yN$ such that
        $$
        y_td(\sigma(t)) \in X.
        $$
    \end{enumerate}
\end{lemma}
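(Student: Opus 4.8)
The plan is to transcribe the proof of Lemma~\ref{lem.acc1} with the non-maximal unipotent $U$ replaced by the horospherical subgroup $N$, using part~(2) of Lemma~\ref{lem.qr2} (Shah's polynomial blowup for $S=N$) in place of part~(1). Since $y\in X=\ov{xU}$, I would pick $s_n\in\R$ with $x_n:=xu_{s_n}\to y$, write $x_n=yg_n$ with $g_n\to e$ in $G$, so that $x=yg_nu_{s_n}^{-1}$. Passing to a subsequence, either $g_n\in\op{N}_G(N)$ for all $n$, or $g_n\notin\op{N}_G(N)$ for all $n$, and I would treat these two cases separately, showing that the first forces~(1) and the second produces~(2).

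In the first case I would argue that $xN$ is compact. Since $g_n$ normalizes $N$ and $u_{s_n}^{-1}\in N$, we can write $g_nu_{s_n}^{-1}=\widetilde n_ng_n$ with $\widetilde n_n:=g_nu_{s_n}^{-1}g_n^{-1}\in N$; hence $xg_n^{-1}=y\widetilde n_n\in yN$ for every $n$. As $yN$ is compact (so closed) and $g_n^{-1}\to e$, letting $n\to\infty$ gives $x\in yN$, and therefore $xN=yN$ is compact. (Equivalently, one can keep the product in the order $y\widetilde n_n g_n$ and extract a convergent subsequence of $y\widetilde n_n$ inside the compact set $yN$, exactly mirroring the $\op{N}_G(U)$-case of Lemma~\ref{lem.acc1}.)

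In the second case, $g_n\in G-\op{N}_G(N)$ with $g_n\to e$, so Lemma~\ref{lem.qr2}(2) supplies a non-constant $\sigma\in\C[t]$ with $\sigma(0)=1$ and $d(\sigma(t))\in\limsup_{n\to\infty}Ng_nU$ for every $t\in\R$ with $\sigma(t)\neq 0$. Fix such a $t$. Unwinding Definition~\ref{def.limsup1}, after passing to a subsequence there are $\widehat n_n\in N$ and $u_n'\in U$ with $\widehat n_ng_nu_n'\to d(\sigma(t))$. Since $(y\widehat n_n^{-1})(\widehat n_ng_nu_n')=yg_nu_n'=x_nu_n'\in xU\subset X$ and $y\widehat n_n^{-1}$ lies in the compact set $yN$, a further subsequence yields $y\widehat n_n^{-1}\to y_t\in yN$; passing to the limit and using that $X$ is closed gives $y_t\, d(\sigma(t))\in X$. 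As $t$ ranges over all values with $\sigma(t)\neq 0$, this is precisely~(2).

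I do not expect a genuine obstacle here, since this is essentially a word-for-word adaptation of Lemma~\ref{lem.acc1}; the only points needing a moment's care are (i) that $\op{N}_G(N)=MAN$, so the dichotomy above is the correct one and Lemma~\ref{lem.qr2}(2) applies in the stated form --- this is standard and already built into the paper's conventions ($AM\subset\op{N}_G(N)$, $N$ horospherical); and (ii) that the $t$-dependent subsequence extraction in the second case is harmless, which it is because $\sigma$ is produced once and for all while $y_t$ is allowed to depend on $t$. As in Lemma~\ref{lem.acc1}, it is the compactness of $yN$ that tames the otherwise unbounded sequences $\widetilde n_n$ and $\widehat n_n$ and makes both limiting arguments go through.
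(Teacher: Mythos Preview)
Your proposal is correct and follows essentially the same approach as the paper's own proof: the same dichotomy on whether $g_n\in\op{N}_G(N)$, the same use of Lemma~\ref{lem.qr2}(2) in the second case, and the same compactness-of-$yN$ argument to extract $y_t$. Your treatment of the first case is slightly more explicit (writing out the conjugation $g_nu_{s_n}^{-1}=\widetilde n_ng_n$ to conclude $x\in yN$ directly), whereas the paper simply notes $x\in x_nU\subset yNg_n$ and lets $g_n\to e$; both are equivalent one-liners.
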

\begin{proof}
    Since $y\in X$, there exists a sequence $x_n\in xU$ such that $x_n\to y$ as $n\to\infty$.
    We may write $x_n=yg_n$ for some sequence $g_n\to e$ in $G$. After passing to a subsequence, we may assume that either $g_n \in \op{N}_G(N)$ for all $n \ge 1$ or $g_n \notin \op{N}_G(N)$ for all $n \ge 1$.

    If $g_n\in\op{N}_G(N)$ for all $n \ge 1$, then
    $$
    x \in x_n U = y g_n U \subset y g_n N = y N g_n \quad \text{for all } n \ge 1.
    $$
    Since $yN$ is compact and $g_n \to e$, this implies that $xN$ is compact, and (1) follows.
    
    Now assume $g_n\not\in\op{N}_G(N)$ for all $n \ge 1$.
    Applying Lemma \ref{lem.qr2}(2) with $S=N$, we obtain a non-constant polynomial $\sigma\in\bb C[t]$ with $\sigma(0) = 1$ satisfying
    \begin{equation}\label{eq.ngt1}
    d(\sigma(t))\in\limsup_{n \to \infty} Ng_n U \quad \text{for all } t \in \R \text{ s.t. } \sigma(t) \neq 0.
    \end{equation}
    Let $t\in\bb R$ be  such that $\sigma(t)\neq 0$.
    By \eqref{eq.ngt1}, there exist sequences $p_n \in N$ and $u_n \in U$ such that
    $p_n g_n u_n \to d(\sigma(t))$ as $n \to \infty$. 
    Since $yN$ is compact, after passing to a subsequence, $y p_n^{-1}$ converges to some $y_t \in yN$. We then have
    $$
    y_t d(\sigma(t)) = \lim_{n \to \infty} y p_n^{-1} (p_n g_n u_n) = \lim_{n \to \infty} x_n u_n \in X.
    $$
    Therefore, (2) holds.
\end{proof}

\begin{remark}
    We remark that proofs of Lemma \ref{lem.acc1} and Lemma \ref{lem.acc2} work for a general Kleinian group $\Ga < G$.
\end{remark}

\subsection*{Proof of Theorem \ref{thm.clu2}}
    Suppose that $xN$ is not compact. By Corollary \ref{cor.clu1},
    it suffices to consider the case Corollary \ref{cor.clu1}(2) that any compact $N$-orbit meeting $X$ is contained in $X$. Assume that we are in such a case. 
    By the hypothesis, there exists $y \in X$ such that $yN$ is compact.
    Since $xN$ is not compact, it follows from Lemma \ref{lem.acc2} that we can find a non-constant polynomial $\sigma\in\bb C[t]$ such that for all $t\in\bb R$ satisfying $\sigma(t)\neq 0$, we have for some $y_t \in yN$ that 
    $$
    y_td(\sigma(t))\in X.
    $$
    Note that $d(\sigma(t))\in\op{N}_G(N)$ and hence
    $$
    y_td(\sigma(t))N=y_tNd(\sigma(t))=yNd(\sigma(t))
    $$
    is a compact $N$-orbit meeting $X$. Therefore, it follows from the hypothesis that
    $$
    yNd(\sigma(t))\subset X.
    $$
    Since $\sigma \in \C[t]$ is non-constant, we have $|\sigma(t)|\to\infty$ as $t\to\infty$ in $\bb R$.
    Hence $X=\RFPM$ by Lemma \ref{lem.eqd}. In particular, $X$ contains a $vAUv^{-1}$-orbit for some $v\in V$. This finishes the proof.
\qed


\section{A $U$-orbit closure without any compact $U$-orbit}\label{sec.xwou}

Let $\Ga < G$ be a geometrically finite Kleinian group with a round Sierpi\'nski limit set and $\M = \Ga \ba \H^3$.
In this section, we consider an orbit closure $\ov{xU}$ without any compact $U$-orbit. The following is the main theorem of this section:

\begin{theorem}\label{thm.xu3}
    Let $x \in \RFM$ and $X := \overline{xU}$. Suppose that $X$ does not contain any compact $U$-orbit.
    Then one of the following holds:
    \begin{enumerate}
      \item there exists a compact $N$-orbit in $X$.
      \item there exists a $vAUv^{-1}$-orbit in $X$ for some $v \in V$.
    \end{enumerate}
  \end{theorem}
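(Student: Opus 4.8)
Since conclusion (1) is literally the assertion that $X$ contains a compact $N$-orbit, the plan is to assume from the outset that $X$ contains no compact $N$-orbit and to produce a $vAUv^{-1}$-orbit in $X$. Under this assumption, Lemma \ref{lem.ferte} together with Lemma \ref{lem.onlyrank2} shows that $g^+$ is conical for every $[g]\in X$ and that $X\cap F_\eta(N)=\emptyset$ for every $\eta>0$; hence, by Proposition \ref{prop.thickness}(3), every $U$-orbit meeting $X$ recurs to the compact set $W_R$, so $X\cap W_R\neq\emptyset$. I would then apply Zorn's lemma to the nonempty closed $U$-invariant subsets of $X$ meeting $W_R$ to obtain a minimal such set $Y$; using Proposition \ref{prop.thickness}(3) once more, Lemma \ref{lem.zv}, Corollary \ref{cor.uinsurface}, and the hypotheses that $X$ has no compact $U$-orbit and (now) no compact $N$-orbit, one checks that $Y$ is in fact $U$-minimal, that $Y$ cannot be compact, and that $Y\subseteq\RFM\cdot U$. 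Thus $Y$ is a noncompact $U$-minimal subset of $X$.

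Next I would ``fan out'' $Y$. Since $Y$ is $U$-minimal and noncompact — in particular it is not a single closed $U$-orbit meeting $\RFM$, which cannot exist — there are $y_\infty\in Y$ and $y_n=y_\infty g_n\in Y$ with $g_n\to e$ and $g_n\notin\op{N}_G(U)=AN$. Applying the polynomial unipotent blowup Lemma \ref{lem.qr2}(1) to $g_n$ produces real polynomials $\sigma,\nu$, at least one non-constant, with $\sigma(0)=1$, $\nu(0)=0$, and $\Phi(t):=d(\sigma(t))v(\nu(t))\in\limsup_{n\to\infty}Ug_nU$ whenever $\sigma(t)\neq 0$. Exactly as in the proof of Lemma \ref{lem.acc1}, for each such $t$ there is $y_t\in Y$ with $y_t\Phi(t)\in X$, and since $Y$ is $U$-minimal and $\Phi(t)\in AV\subseteq\op{N}_G(U)$ this upgrades to
$$Y\,\Phi(t)=\overline{y_t\Phi(t)\,U}\subseteq X\qquad\text{whenever }\sigma(t)\neq 0.$$
A key point to record is that each translate $Y\Phi(t)$ is again a noncompact $U$-minimal subset of $X$, so the same reasoning used for $Y$ shows that $Y\Phi(t)$ avoids every $F_\eta(N)$, lies in $\RFM\cdot U$, and (by Proposition \ref{prop.thickness}(3)) meets the compact set $W_R$.

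Finally I would split on whether $\sigma$ is constant. If $\sigma$ is non-constant, then $y_\infty\Phi(t)\in Y\Phi(t)\subseteq\RFM\cdot U$ for all large $t$, and Proposition \ref{prop.sigma2} applied to $y_\infty\in\RFPM$ yields $v\in V$ with $\limsup_{t\to\infty}y_\infty U\Phi(t)$ — a subset of $\limsup_{t\to\infty}Y\Phi(t)\subseteq X$ — containing a $vAUv^{-1}$-orbit, which is (2). If $\sigma$ is constant, so $\Phi(t)=v(\nu(t))\in V$ with $\nu$ non-constant, set $X_1:=\limsup_{t\to\infty}Y\Phi(t)\subseteq X$; since each $Y\Phi(t)$ meets the compact $W_R$, the set $X_1$ is nonempty, closed and $U$-invariant, and by Corollary \ref{cor.blowup2} it is invariant under a one-parameter subgroup $L<AV$, which by Lemma \ref{lem.1psg} equals $V$ or $vAv^{-1}$ for some $v\in V$. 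In the first case $X_1$ is $N$-invariant, so Proposition \ref{prop.classifyhoro} forces $X_1=\RFPM$ (the alternative $X_1\subseteq F_\eta(N)$ would place a compact $N$-orbit in $X$, against our assumption), whence $X=\RFPM$ and (2) holds; in the second case $X_1$ is invariant under $\langle U,vAv^{-1}\rangle=vAUv^{-1}$ and, being nonempty, contains a $vAUv^{-1}$-orbit, so again (2) holds. The main obstacle, absent in the convex cocompact setting of \cite{McMullen2016horocycles}, is precisely guaranteeing that $Y$ and the translated accumulation sets $X_1$, $Y\Phi(t)$ do not escape into a cusp; this is where the quantitative recurrence of horocycle flows to the compact sets $W_\rho$ from \cite{Benoist2022geodesic} (Proposition \ref{prop.thickness}), combined with the fact that $X$ avoids every $F_\eta(N)$ once it has no compact $N$-orbit, carries the argument, and verifying the fanning-out step and the $U$-minimality and noncompactness of $Y$ under the standing hypotheses is the other technical ingredient.
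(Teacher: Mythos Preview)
Your overall architecture---reduce to the case of no compact $N$-orbit, pass to a relatively $U$-minimal $Y\subset X$, then ``fan out'' $Y$ along a curve in $AV$---matches the paper's. The gap is in the fanning-out step, where you invoke the proof of Lemma~\ref{lem.acc1} to produce $y_t\in Y$ with $y_t\Phi(t)\in X$. That argument in Lemma~\ref{lem.acc1} works because $\overline{yU}\subset yN$ is \emph{compact}: from $\widehat u_n g_n u_n'\to\Phi(t)$ one passes to a subsequence along which $y\widehat u_n^{-1}$ converges inside $\overline{yU}$. In your setting you have explicitly arranged that $Y=\overline{y_\infty U}$ is noncompact, and the polynomial blowup Lemma~\ref{lem.qr2}(1) gives no control whatsoever on the size of the shears $\widehat u_n$; there is no reason for $y_\infty\widehat u_n^{-1}$ to subconverge, and without this you cannot conclude $Y\Phi(t)\subset X$. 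The subsequent split on whether $\sigma$ is constant then never gets started. A secondary issue is that you assert without proof that $g_n\notin\op{N}_G(U)$ can be arranged; if every such $g_n$ lies in $AN$ (which you have not excluded), Lemma~\ref{lem.qr2}(1) does not even apply.

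The paper avoids both problems by replacing the polynomial blowup with the thick-set blowup of Lemma~\ref{lem.recurrence}, packaged as Lemma~\ref{lem.newdynamics}. The point is to work not with $Y$ but with the compact slice $Y\cap W$: the delivery set $D(Y\cap W,Y)$ is then closed (Lemma~\ref{lem.sy1}), the recurrence times $\mathsf T_W(y)$ are thick at infinity (Proposition~\ref{prop.thickness}), and one extracts a one-parameter subsemigroup $L_+<AV$ with $YL_+\subset Y$ rather than a polynomial family $Y\Phi(t)\subset X$. The compactness of $Y\cap W$ is precisely what lets one take limits, and it also absorbs the case $g_n\in AN$ (handled directly by reading off the $AV$-part). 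Your recurrence-to-$W_R$ observations are the right ingredients, but they need to be fed into the thick-set mechanism of Lemma~\ref{lem.recurrence} and the semigroup formalism of Lemmas~\ref{lem.sy1}--\ref{lem.sy2}, not into Lemma~\ref{lem.qr2}.
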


To prove Theorem \ref{thm.xu3}, we recall the notion of relatively minimal sets, introduced in \cite{McMullen2017geodesic}:

\begin{definition}\label{def.min}
    Let $W \subset \Ga \ba G$.
  A closed subset $Y \subset \Gamma \backslash G$ is called \emph{$U$-minimal relative to $W$} if $Y \cap W \neq \emptyset$ and $\overline{yU} = Y$ for all $y \in Y \cap W$.
\end{definition}

Note that a relatively $U$-minimal set is $U$-invariant.
If $W$ is compact, then any closed $U$-invariant set $Y \subset \Gamma \backslash G$ such that $Y\cap W\neq\emptyset$ contains a $U$-minimal set relative to $W$.
This usually follows from Zorn's lemma.

Given two closed subsets in $\Ga \ba G$, we collect elements of $G$ that deliver one subset to the other.

\begin{definition}\label{def.s}
For any closed subsets $Y_1$, $Y_2\subset\Ga\ba G$, We define
$$
D(Y_1,Y_2):=\{g\in G: Y_1g\cap Y_2\neq\emptyset\}.
$$    
\end{definition}

Note that for $x \in \Ga \ba G$ and a closed subset $Y \subset \Ga \ba G$,
$$ \T_Y(x) = D(\{x \}, Y) \cap U.$$

\begin{lemma}\label{lem.sy1}
    Let $Y_1, Y_2 \subset \Ga \ba G$ be closed subsets and $S_1, S_2 < G$.
\begin{enumerate}
    \item If one of $Y_i$ $(i=1,2)$ is compact, then $D(Y_1, Y_2)$ is closed.
    \item If $Y_iS_i=Y_i$ for $i = 1, 2$, then $D(Y_1, Y_2)=S_1 D(Y_1, Y_2)S_2$.
\end{enumerate}
\end{lemma}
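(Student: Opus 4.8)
The plan is to prove both parts by elementary arguments; no deep input is required, and the statement is recorded precisely because it is a convenient bookkeeping device. For part (1) the key point is that $\Ga\ba G$ is a locally compact, metrizable (in particular Hausdorff) space on which $G$ acts continuously on the right, since $\Ga$ is discrete; this lets me argue with sequences.

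First I would treat the case that $Y_1$ is compact. Take a sequence $g_n\in D(Y_1,Y_2)$ with $g_n\to g$ in $G$; I want to conclude $g\in D(Y_1,Y_2)$. Choose $y_n\in Y_1$ with $y_ng_n\in Y_2$. By compactness of $Y_1$, after passing to a subsequence $y_n\to y\in Y_1$, and then $y_ng_n\to yg$ by joint continuity of the right action. Since $Y_2$ is closed, $yg\in Y_2$, hence $yg\in Y_1g\cap Y_2$ and $g\in D(Y_1,Y_2)$; so $D(Y_1,Y_2)$ is closed. For the case that $Y_2$ is compact I would reduce to the previous one via the identity
$$D(Y_1,Y_2)=\left(D(Y_2,Y_1)\right)^{-1},$$
which holds because $Y_1g\cap Y_2\neq\emptyset$ is equivalent to $Y_2g^{-1}\cap Y_1\neq\emptyset$: by the first case $D(Y_2,Y_1)$ is closed (now $Y_2$ occupies the ``first'' slot), and inversion is a homeomorphism of $G$, so $D(Y_1,Y_2)$ is closed as well. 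Alternatively one runs the symmetric sequential argument, setting $z_n:=y_ng_n\in Y_2$, extracting $z_n\to z\in Y_2$, and noting $y_n=z_ng_n^{-1}\to zg^{-1}\in Y_1$.

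For part (2) I would first observe that, since each $S_i$ is a subgroup, the hypothesis $Y_iS_i=Y_i$ upgrades to $Y_is=Y_i$ for every single $s\in S_i$: indeed $Y_is\subset Y_iS_i=Y_i$, and applying this to $s^{-1}$ yields the reverse containment. The inclusion $D(Y_1,Y_2)\subset S_1D(Y_1,Y_2)S_2$ is immediate because $e\in S_1\cap S_2$. For the reverse inclusion, let $s_1\in S_1$, $g\in D(Y_1,Y_2)$, $s_2\in S_2$, and choose $p\in Y_1g\cap Y_2$. Then
$$Y_1(s_1gs_2)=(Y_1s_1)gs_2=Y_1gs_2\ni ps_2,\qquad ps_2\in Y_2s_2=Y_2,$$
so $ps_2\in Y_1(s_1gs_2)\cap Y_2$ and $s_1gs_2\in D(Y_1,Y_2)$. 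Hence $S_1D(Y_1,Y_2)S_2\subset D(Y_1,Y_2)$, giving equality.

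Since everything is routine, there is no genuine obstacle. The only point deserving a moment's care is the topological setup underlying part (1) — that $\Ga\ba G$ is Hausdorff and locally compact and that right translation is jointly continuous — which is standard for discrete $\Ga$. If one prefers to avoid sequences, part (1) with $Y_1$ compact can be seen by writing $D(Y_1,Y_2)$ as the image of the closed subset $\{(y,g)\in Y_1\times G:yg\in Y_2\}$ of $Y_1\times G$ under the projection to $G$, which is a closed map because the fiber factor $Y_1$ is compact.
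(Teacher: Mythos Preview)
Your proof is correct; the paper itself omits the proof of this lemma, remarking only that it is ``rather straightforward,'' and your argument is exactly the kind of routine verification the authors had in mind.
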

The proof of Lemma \ref{lem.sy1} is rather straightforward and will be omitted.
\begin{lemma}\label{lem.sy2}
    Let $Y_1$, $Y_2\subset\Ga\ba G$ be closed $U$-invariant sets and $W \subset \Ga \ba G$ a compact subset.
    Suppose that $Y_1$ is $U$-minimal relative to $W$.
    Then
    $$
    D(Y_1\cap W,Y_2)\cap\op{N}_G(U)=\{g\in\op{N}_G(U):Y_1g\subset Y_2\}.
    $$
    In particular, $D(Y_1\cap W,Y_1)\cap\op{N}_G(U)$ is a closed subsemigroup of $\op{N}_G(U)$.
\end{lemma}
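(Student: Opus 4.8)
The plan is to prove the two claims in order. For the first identity, the inclusion $\{g\in\op{N}_G(U):Y_1g\subset Y_2\}\subset D(Y_1\cap W,Y_2)\cap\op{N}_G(U)$ is immediate: if $Y_1 g\subset Y_2$ and $g\in\op{N}_G(U)$, then since $Y_1\cap W\neq\emptyset$ we get $(Y_1\cap W)g\subset Y_1 g\subset Y_2$, so $g\in D(Y_1\cap W,Y_2)$. For the reverse inclusion, suppose $g\in D(Y_1\cap W,Y_2)\cap\op{N}_G(U)$. Then there is $y\in Y_1\cap W$ with $yg\in Y_2$. Because $Y_1$ is $U$-minimal relative to $W$, we have $\ov{yU}=Y_1$. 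Since $g$ normalizes $U$, the set $yUg=ygU$ has closure $\ov{ygU}$, and using that $Y_2$ is closed and $U$-invariant together with continuity of right translation by $g$, I would argue $Y_1 g=\ov{yU}g=\ov{yUg}=\ov{ygU}\subset Y_2$, the last step because $yg\in Y_2$ and $Y_2\supset ygU$ is closed. Hence $g\in\{g\in\op{N}_G(U):Y_1g\subset Y_2\}$, proving the first identity.

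For the "in particular" statement, apply the identity with $Y_2=Y_1$. First, $D(Y_1\cap W,Y_1)\cap\op{N}_G(U)$ is nonempty since it contains the identity ($Y_1\cap W\neq\emptyset$ and $Y_1$ is $U$-invariant, so in particular $e$ works). It is a subsemigroup: if $g_1,g_2$ both lie in it, then by the identity $Y_1 g_1\subset Y_1$ and $Y_1 g_2\subset Y_1$, hence $Y_1 g_1 g_2\subset Y_1 g_2\subset Y_1$ and $g_1g_2\in\op{N}_G(U)$, so $g_1g_2$ lies in the set again. For closedness, I would invoke Lemma \ref{lem.sy1}(1): $W$ is compact, hence $Y_1\cap W$ is compact (it is closed in the compact set $W$), so $D(Y_1\cap W, Y_1)$ is closed; intersecting with the closed subgroup $\op{N}_G(U)$ keeps it closed.

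The only subtle point—and the step I would be most careful about—is the equality $\ov{yU}g=\ov{ygU}$ and the containment $\ov{ygU}\subset Y_2$. The first is just that right translation by $g$ is a homeomorphism of $\Ga\ba G$, so it commutes with closure; combined with $Ug=gU$ (as $g\in\op{N}_G(U)$) this gives $\ov{yU}g=\ov{yUg}=\ov{ygU}$. The containment $\ov{ygU}\subset Y_2$ then follows because $ygU\subset Y_2$ (as $yg\in Y_2$ and $Y_2$ is $U$-invariant) and $Y_2$ is closed. Everything else is formal manipulation with the definitions, so there is no real obstacle; the lemma is essentially a bookkeeping consequence of relative $U$-minimality plus the normalizer hypothesis.
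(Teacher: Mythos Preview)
Your proof is correct and follows essentially the same approach as the paper: both directions of the identity are argued identically (the easy inclusion from $Y_1\cap W\neq\emptyset$, and the reverse via $Y_1 g=\ov{yU}g=\ov{ygU}\subset Y_2$ using relative $U$-minimality, the normalizer condition, and closed $U$-invariance of $Y_2$). The paper dismisses the ``in particular'' as straightforward, whereas you spell out the semigroup and closedness verifications, but the content is the same.
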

\begin{proof}
    The hypothesis that $Y_1$ is $U$-minimal relative to $W$ implies that $Y_1 \supset Y_1 \cap W \neq \emptyset$, and hence $\{g\in\op{N}_G(U):Y_1g\subset Y_2\} \subset D(Y_1\cap W,Y_2)\cap\op{N}_G(U)$.

    Conversely, let $g\in D(Y_1\cap W,Y_2)\cap\op{N}_G(U)$.
    Then $y_1g=y_2$ for some $y_1\in Y_1\cap W$ and $y_2\in Y_2$.
    Since $Y_1$ is $U$-minimal relative to $W$ and $g\in\op{N}_G(U)$, we have
    $$
    Y_1g=\ov{y_1U}g=\ov{y_1gU}=\ov{y_2U}\subset Y_2.
    $$
    Hence, the reverse inclusion follows. The last assertion is straightforward.
\end{proof}

We will use the following lemma:

\begin{lemma}\cite[Lemma 8.2]{Benoist2022geodesic}\label{lem.yuy}
    Let $Y \subset \Ga \ba G$ be a $U$-minimal set relative to a compact subset $W \subset \Ga \ba G$ and $y\in Y\cap W$. 
    If $\T_{Y \cap W}(y)$ is unbounded, then there exists a sequence $u_n\to\infty$ in $U$  such that $yu_n\to y$ as $n\to\infty$.
\end{lemma}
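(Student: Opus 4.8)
The plan is to produce the desired recurrence in two stages: first use the compactness of $W$ to manufacture \emph{some} limit point $z\in Y\cap W$ of the forward $U$-orbit of $y$, then use the relative $U$-minimality of $Y$ to return from $z$ back to $y$, and finally splice these two pieces together by a diagonal argument.

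First I would unwind the hypothesis. Since $\T_{Y\cap W}(y)=\{u\in U:yu\in Y\cap W\}$ is unbounded in $U\cong\R$, there is a sequence $s_n\in\R$ with $|s_n|\to\infty$ and $yu_{s_n}\in Y\cap W$; after replacing $s_n$ by $-s_n$ and passing to a subsequence I may assume $s_n\to+\infty$. Because $W$ is compact and $Y$ is closed, a further subsequence gives $yu_{s_n}\to z$ for some $z\in Y\cap W$. Next, since $Y$ is $U$-minimal relative to $W$ (Definition \ref{def.min}) and $z\in Y\cap W$, we have $\ov{zU}=Y$; in particular $y\in\ov{zU}$, so there is a sequence $t_m\in\R$ with $zu_{t_m}\to y$. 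For each fixed $m$, right translation by $u_{t_m}$ is continuous and $U$ is abelian, so $yu_{s_n+t_m}=(yu_{s_n})u_{t_m}\to zu_{t_m}$ as $n\to\infty$.

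Finally I would run the diagonal argument. For each $m$, first fix $t_m$ as above, then choose $n(m)$ large enough that simultaneously $s_{n(m)}+t_m\ge m$ (possible because $s_n\to+\infty$ while $t_m$ is now a fixed real number) and $yu_{s_{n(m)}+t_m}$ lies within distance $1/m$ of $zu_{t_m}$. Setting $\tau_m:=s_{n(m)}+t_m$, we get $\tau_m\to+\infty$, hence $u_{\tau_m}\to\infty$ in $U$, while $yu_{\tau_m}$ is within $1/m$ of $zu_{t_m}$ and $zu_{t_m}\to y$, so $yu_{\tau_m}\to y$. This is exactly the conclusion, with $u_n:=u_{\tau_n}$.

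The only delicate point — and the one I would flag as the main obstacle — is the bookkeeping in the diagonal step: the auxiliary parameters $t_m$ are not under our control (they may stay bounded or run off to $\pm\infty$), so one has to be careful that the spliced parameters $\tau_m=s_{n(m)}+t_m$ still escape to infinity. This is arranged precisely by choosing $n(m)$ \emph{after} $t_m$ has been fixed, exploiting that the divergence of $s_n$ dominates the fixed shift $t_m$. Everything else is a routine application of the compactness of $W$ and the defining property of $U$-minimality relative to $W$; this is in essence the argument of Benoist--Oh \cite[Lemma 8.2]{Benoist2022geodesic}.
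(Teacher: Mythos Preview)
Your argument is correct and is the standard two-step recurrence argument; the paper itself gives no proof here but simply cites \cite[Lemma 8.2]{Benoist2022geodesic}, where essentially the same compactness-plus-minimality-plus-diagonalization proof appears.

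One minor correction: the phrase ``after replacing $s_n$ by $-s_n$'' is not quite right, since knowing $yu_{s_n}\in Y\cap W$ says nothing about $yu_{-s_n}$. What you should say instead is that after passing to a subsequence either $s_n\to+\infty$ or $s_n\to-\infty$, and the two cases are handled symmetrically (in the latter you arrange $\tau_m\le -m$ rather than $\tau_m\ge m$; either way $u_{\tau_m}\to\infty$ in $U$, which is all the conclusion demands). This is purely a wording issue and does not affect the substance of your proof.
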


An analogue of the following proposition was proved in \cite[Theorem 9.4]{McMullen2017geodesic} when $\Ga$ is further assumed to be convex cocompact:

\begin{lem} \label{lem.newdynamics}
    Let $Y \subset \Ga \ba G$ be a $U$-minimal set relative to a compact subset $W\subset \Ga \ba G$. Suppose that $Y$ is not a compact $U$-orbit, and that 
    $\T_W(y)$ is $k$-thick at $\infty$ for some $y \in Y \cap W$ and $k > 1$.
    Then there exists a one-parameter subsemigroup $L_+<AV$ such that 
    $$
    YL_+\subset Y.
    $$
\end{lem}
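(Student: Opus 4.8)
The goal is equivalent to showing that the set $\mathcal S:=\{g\in AV:Yg\subseteq Y\}$ contains a non-trivial one-parameter subsemigroup. Since $Y\cap W$ is compact, $D(Y\cap W,Y)$ is closed by Lemma \ref{lem.sy1}, and since $Y$ is $U$-minimal relative to $W$, Lemma \ref{lem.sy2} identifies $\mathcal S$ with $D(Y\cap W,Y)\cap AV$; thus $\mathcal S$ is a closed subsemigroup of the two-dimensional Lie group $AV$ with $e\in\mathcal S$. It therefore suffices to produce a sequence $\ell_n\to e$ in $\mathcal S\setminus\{e\}$: writing $\ell_n=\exp X_n$ with $X_n\to 0$ in the Lie algebra of $AV$ and letting $\bar X$ be an accumulation point of $X_n/\lVert X_n\rVert$, one has $\ell_n^{m}\in\mathcal S$ for all integers $m\ge 0$, whence $\exp(t\bar X)=\lim_k\ell_{n_k}^{\lfloor t/\lVert X_{n_k}\rVert\rfloor}\in\mathcal S$ for every $t\ge 0$; this half-line is the desired $L_+<AV$, and by Lemma \ref{lem.1psg} it is a half of $V$ or of a conjugate $vAv^{-1}$.

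To produce such a sequence, observe that $\mathsf{T}_{Y\cap W}(y)=\mathsf{T}_W(y)$ because $Y$ is $U$-invariant, so this set is $k$-thick at $\infty$, hence unbounded, and Lemma \ref{lem.yuy} yields $u_n\to\infty$ in $U$ with $yu_n\to y$. Write $yu_n=yg_n$ with $g_n\to e$ in $G$ (possible since the injectivity radius at $y$ is positive). Then $g_n\ne e$, and $g_n\notin U$: otherwise $g_nu_n^{-1}$ is a non-trivial element of $\Ga^g\cap U$ (where $y=[g]$), so $yU$ is a compact $U$-orbit and $Y=\ov{yU}=yU$, contradicting the hypothesis. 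Passing to a subsequence, either $g_n\in\op{N}_G(U)=AN$ for all $n$, or $g_n\notin\op{N}_G(U)$ for all $n$.

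In the normalizing case, write $g_n=\ell_nu_{p_n}$ with $\ell_n\in AV\setminus\{e\}$ and $u_{p_n}\in U$. Since $yg_n=yu_n\in yU\subseteq Y$ we have $\ov{(yg_n)U}=\ov{yU}=Y$, while $g_n$ normalizing $U$ gives $\ov{(yg_n)U}=\ov{yUg_n}=\ov{yU}\,g_n=Yg_n$; hence $Yg_n=Y$, and $Y$ being $U$-invariant gives $Y\ell_n=Yg_nu_{p_n}^{-1}=Y$, i.e. $\ell_n\in\mathcal S\setminus\{e\}$. In the non-normalizing case, apply the unipotent blowup for thick sets (Lemma \ref{lem.recurrence}) to $g_n\in G-AN$ and the $k$-thick-at-$\infty$ set $T:=\mathsf{T}_W(y)$; this produces $\ell_m\to e$ in $AV\setminus\{e\}$ with $\ell_m\in\limsup_{n\to\infty}Tg_nU$. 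It then remains to check $\ell_m\in D(Y\cap W,Y)$, since then $\ell_m\in\mathcal S$ by Lemma \ref{lem.sy2} (as $\ell_m\in AV\subseteq\op{N}_G(U)$): taking $\hat u_kg_ku_k'\to\ell_m$ with $\hat u_k\in T$ and $u_k'\in U$, one has $y\hat u_k\in Y\cap W$, and—using the recurrence $yg_k=yu_{n_k}$ together with the relative minimality of $Y$, and keeping the relevant frames inside the compact set $W$ via the Benoist--Oh recurrence (Proposition \ref{prop.thickness}) and the truncations $W_\rho$—one extracts, along a further subsequence, a point $z\in Y\cap W$ with $z\ell_m\in Y$. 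This gives $\ell_m\in\mathcal S\setminus\{e\}$, and the proof concludes as in the first paragraph.

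The delicate point is this last verification. When $\M$ is convex cocompact, $\RFM$ is compact and one may simply take limits along $\hat u_k$ inside a fixed compact set, as in the proof of Lemma \ref{lem.acc1} and in \cite[Theorem 9.4]{McMullen2017geodesic}; with cusps present $\RFM$ is non-compact, so the blowup must be performed inside the compact truncations $W_\rho$ while carefully tracking how the left translates coming from $T$ and the right translates coming from $U$ interact across $\limsup_nTg_nU$ without escaping into the cusps. The relatively $U$-minimal set formalism (Definitions \ref{def.min}, \ref{def.s} and Lemmas \ref{lem.sy1}, \ref{lem.sy2}) is the device that organizes this bookkeeping.
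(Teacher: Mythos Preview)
Your overall architecture matches the paper's: reduce to finding a nontrivial sequence $\ell_n\to e$ in the closed subsemigroup $D(Y\cap W,Y)\cap AV$, produce the recurrence $yu_n\to y$ via Lemma \ref{lem.yuy}, write $yu_n=yg_n$ with $g_n\to e$ and $g_n\notin U$, and split into $g_n\in AN$ versus $g_n\notin AN$. Your normalizing case is fine.

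The gap is in the non-normalizing case. You apply Lemma \ref{lem.recurrence} with $T=\T_W(y)$ and then must show $\limsup_n Tg_nU\subset D(Y\cap W,Y)$. You do not actually do this: you note $y\hat u_k\in Y\cap W$ for $\hat u_k\in T$, but this does \emph{not} give $\hat u_kg_{n_k}u_k'\in D(Y\cap W,Y)$, since there is no evident $z\in Y\cap W$ with $z\hat u_kg_{n_k}u_k'\in Y$ (for instance $(y\hat u_k)\hat u_kg_{n_k}u_k'=y\hat u_k^2g_{n_k}u_k'$ has no reason to lie in $Y$). Your appeal to Proposition \ref{prop.thickness} and the truncations $W_\rho$ is a red herring; no further recurrence is needed and none will repair this algebraic mismatch.

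The fix is a one-line change: take $T:=\T_W(y)^{-1}$, which is equally $k$-thick at $\infty$. Then for $u\in T$ one has $yu^{-1}\in Y\cap W$ and
\[
(yu^{-1})(ug_n u')=yg_n u'=yu_n u'\in Y,
\]
so $Tg_nU\subset D(Y\cap W,Y)$ outright; since $D(Y\cap W,Y)$ is closed (Lemma \ref{lem.sy1}), the blowup output $\ell_m\in\limsup_n Tg_nU$ lies in $D(Y\cap W,Y)\cap AV$ and you are done. This is exactly what the paper does, and your ``delicate point'' disappears entirely.
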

\begin{proof}
Since $Y \cap W$ is compact, $D(Y \cap W, Y)$ is closed by Lemma \ref{lem.sy1}(1).
We claim that there exists a non-trivial element in $D(Y \cap W, Y) \cap AV$ arbitrarily close to $e$.
Since $AV < \op{N}_G(U)$ is closed and 
$$D(Y \cap W, Y) \cap \op{N}_G(U) = \{g \in \op{N}_G(U) : Y g \subset Y\} < \op{N}_G(U)$$
is a closed subsemigroup by Lemma \ref{lem.sy2}, the lemma follows from the claim.

    \medskip

Let $y\in Y \cap W$ be such that  $\mathsf{T}_W(y)$ is $k$-thick at $\infty$. Since $yU \subset Y$, $\T_{Y \cap W}(y) = \T_W(y)$ is unbounded.
By Lemma \ref{lem.yuy}, there exists a sequence $u_n\to\infty$ in $U$ such that $yu_n\to y$ as $n\to\infty$.
We can write $yu_n=yg_n$ for some sequence $g_n\to e$ in $G$ as $n\to\infty$. We in particular have 
\be \label{eqn.gisdelivery}
g_n \in D(Y \cap W, Y).
\ee

We first observe that $g_n \notin U$ for all but finitely many $n \ge 1$. Since $\ov{yU} = Y$ and $Y$ is not a compact $U$-orbit, $yU$ is not compact. Hence, if $g_n \in U$, then it follows from $y u_n g_n^{-1} = y$ that $u_n = g_n$. Since $u_n \to \infty$ and $g_n \to e$ as $n \to \infty$, this is possible only for finitely many $n \ge 1$.

We now construct a non-trivial sequence $\ell_n \to e$ in $D(Y \cap W, Y) \cap AV$, which completes the proof as mentioned above. After passing to a subsequence, either  $g_n \in AN$ for all $n \ge 1$ or $g_n \notin AN$ for all $n \ge 1$.
First, consider the case that $g_n \in AN$ for all $n \ge 1$. Since $N = VU$, there exists a sequence $\widehat u_n \to e$ in $U$ such that $g_n \widehat u_n \in AV$ for all $n \ge 1$, and $g_n \widehat u_n \to e$ in particular. By the above observation, $g_n \widehat u_n \neq e$ for all large enough $n \ge 1$. By \eqref{eqn.gisdelivery} and Lemma \ref{lem.sy1}(2), we have $g_n \widehat u_n \in D(Y \cap W, Y)$ for all $n \ge 1$. Therefore, we take $\ell_n = g_n \widehat u_n$ in this case.

Now assume that $g_n \notin AN$ for all $ n \ge 1$. Let
$$
T :=  \T_W(y)^{-1}.
$$
For any $u \in T$ and $n \in \N$, we have 
$$ yu^{-1} \in Y \cap W \quad \text{and} \quad y u^{-1} (u g_n) = y g_n  \in Y.
$$
This implies $T g_n \subset D(Y \cap W, Y)$ for all $n \ge 1$. By Lemma \ref{lem.sy1}(2),
$$T g_n U \subset D(Y \cap W, Y) \quad \text{for all } n \ge 1.$$
Since $\T_W(y)$ is $k$-thick at $\infty$, $T \subset U$ is so. 
By Lemma \ref{lem.recurrence}, $\limsup_{n \to \infty} T g_n U$ contains a sequence $\ell_n \to e$ in $AV - \{e\}$ as $n\to\infty$.
Since $D(Y \cap W, Y)$ is closed, $\limsup_{n \to \infty} T g_n U \subset D(Y \cap W, Y)$. Therefore, $\ell_n \in D(Y \cap W, Y) \cap AV$ is the desired sequence, finishing the proof.
\end{proof}

\begin{remark}
    We remark that Lemma \ref{lem.yuy} and Lemma \ref{lem.newdynamics} hold for more general geometrically finite, acylindrical hyperbolic 3-manifolds, as the original statement of Lemma \ref{lem.yuy} in \cite{Benoist2022geodesic} was proved in such a setting. The proof of Lemma \ref{lem.newdynamics} works verbatim as long as Lemma \ref{lem.yuy} holds.
\end{remark}

\subsection*{Proof of Theorem \ref{thm.xu3}}
Let $k_{\M} > 1$ and $R, \xi_{\M} > 0$ be constants as in Proposition \ref{prop.tt}, Proposition \ref{prop.thickness}, and \eqref{eqn.defxi0} respectively. Since $x \in \RFM$, there exists $\rho > 0$ such that $x \in W_{\rho + R}$. We simply write $W := W_{\rho + R}$ which is compact. Since $X = \ov{xU}$ is $U$-invariant and $X \cap W \neq \emptyset$, there exists a $U$-minimal set $Y \subset  X$ relative to $W$.

Since $Y$ cannot be a compact $U$-orbit and $\T_W(y)$ is $4k_{\M}$-thick at $\infty$ for any $y \in Y \cap W$ by Proposition \ref{prop.thickness}(2), it follows from Lemma \ref{lem.newdynamics} that there exists a one-parameter subsemigroup $L_+<AV$ such that $YL_+\subset Y$. 

    \medskip

  We claim that one of the following holds:
  \begin{enumerate}
      \item[(a)] $Y$ contains a compact $N$-orbit. 
      \item[(b)]
      there exist $y_0 \in Y$ and $v_0 \in V$ such that $y_0 v_0  \in \BFM$ and $$ y_0v_0Hv_0^{-1} \subset Y.$$
      \item[(c)] there exists a sequence $\ell_n\to\infty$ in $L_+$ such that $$\limsup_{n \to \infty} Y\ell_n \neq \emptyset.$$
  \end{enumerate}
    Suppose first that there exists $z \in Y$ such that $z N$ is compact. Since $z U \subset Y \subset X$ and $X$ does not contain any compact $U$-orbit, $z U$ is not compact. This implies
    $$z N = \ov{ z U} \subset Y,$$
    from which (a) follows.

Now assume that
\be \label{eqn.hypothesisonY}
z N \text{ is not compact for all } z \in Y.
\ee
Fix $y \in Y \cap W$ and let $g \in G$ be such that $y = [g]$. Let $L < AV$ be the one-parameter subgroup containing $L_+$.
By Lemma \ref{lem.1psg}, $L=vAv^{-1}$ for some $v\in V$ or $L=V$. 
Suppose first that $L = v A v^{-1}$ for some $v \in V$. There are two possible cases:
  \begin{itemize}
      \item Suppose that $yvU \cap \RFM = \emptyset$. By Lemma \ref{lem.zv}, we have $yv \in \BFM \cdot V$, and hence $yv_0 \in \BFM$ for some $v_0 \in V$. Recalling that $\BFM \subset \RFM$ is a union of finitely many compact $H$-orbits,
      \be \label{eqn.nowinbfm}
 y v_0 H = \ov{y v_0 U} = \ov{y U} v_0 = Y v_0
\ee
since $y v_0 H$ is $U$-minimal by Corollary \ref{cor.uinsurface}. Therefore, (b) follows in this case.

      \item Suppose that $yvU \cap \RFM \neq \emptyset$. Then we have either $(gvU)^- \subset \La$ or $(gvU)^-$ meets both $\La$ and $\widehat \C - \La$. In any case, there exists $u \in U$ such that $(guv)^- = (gvu)^- \in \La$ is conical. Hence, we can find a sequence $t_n \to \infty$ such that
      \be \label{eqn.backwardcon}
      \text{the sequence } yuva_{-t_n} \text{ converges.}
      \ee
      By Lemma \ref{lem.ferte}, $(guv)^+ = g^+$ is conical as well, since $yN$ is not compact. We then have that for some sequence $s_n \to \infty$,
      \be \label{eqn.forwardcon}
      \text{the sequence } yuva_{s_n} \text{ converges.}
      \ee
      Writing $A^+ := \{a_t \in A : t \ge 0\}$, there are two subcases:
      \begin{itemize}
          \item if $L_+ = v A^+ v^{-1}$, set $\ell_n = v a_{s_n}v^{-1} \in L_+$. Then by \eqref{eqn.forwardcon}, the sequence $$yu \ell_n = yu(va_{s_n}v^{-1})$$converges.
          \item if $L_+ = v (A^+)^{-1} v^{-1}$, set $\ell_n = v a_{-t_n}v^{-1} \in L_+$. Then by \eqref{eqn.backwardcon} the sequence $$yu \ell_n = yu(va_{-t_n}v^{-1})$$ converges.
      \end{itemize}
      Since $yu \in Y$, we have $\limsup_{n \to \infty} Y \ell_n \neq \emptyset$. Since $\ell_n \in L_+$ diverges, this shows (c).
  \end{itemize}
  
  Next, suppose  $L=V$. 
  There are two cases:
  \begin{itemize}
      \item if $Y \cap \BFM \cdot V \neq \emptyset$, then $y_0 v_0 \in \BFM$ for some $y_0 \in Y$ and $v_0 \in V$. Recalling that $\BFM \subset \RFM$ is a union of finitely many compact $H$-orbits, 
      $$
      y_0 v_0 H = \ov{y_0 v_0 U} = \ov{y_0 U} v_0 \subset Y v_0
      $$
      since $y_0 v_0 H$ is $U$-minimal by Corollary \ref{cor.uinsurface} and $Y = \ov{yU}$ is $U$-invariant. Therefore, (b) follows in this case.
      \item if $Y \cap \BFM \cdot V = \emptyset$, then $Yv \subset \RFM \cdot U$ for all $v \in V$ by Lemma \ref{lem.zv}. Choose any sequence $\ell_n \to \infty$ in $L_+ < V$. Then by the $U$-invariance of $Y$, there exists a sequence $y_n \in Y$  such that $y_n \ell_n \in \RFM$ for all $n \ge 1$.
      By the hypothesis \eqref{eqn.hypothesisonY}, $y_n \ell_n N = y_n N$ is not compact for all $n \ge 1$.
    We then have that
      $$
      y_n \ell_n \in \RFM - F_{e^{-2R}\xi_{\M}}(N) \quad \text{for all } n \ge 1.
      $$
      By Proposition \ref{prop.thickness}(3), 
      $$
      y_n \ell_n U \cap W_R \neq \emptyset \quad \text{for all } n \ge 1
      $$
      where $W_R = \RFM - \inte(F_{e^{-2R}\xi_{\M}})$. Since $y_n \ell_n U \subset Y \ell_n$ and $W_R$ is compact, it follows that $\limsup_{n \to \infty} Y \ell_n \neq \emptyset$, and therefore (c) holds.
  \end{itemize}

  \medskip

  To finish the proof, it suffices to consider the case (c) of the claim that for some sequence $\ell_n \to \infty$ in $L_+$, we have $\limsup_{n \to \infty} Y \ell_n \neq \emptyset$. Let $y_0 \in \limsup_{n \to \infty} Y \ell_n$ and take a sequence $y_n \in Y$ such that $y_n \ell_n \to y_0$ as $n \to \infty$. Then for any $\ell \in L$, we have $\ell_n \ell \in L_+$ for all large enough $n \ge 1$, and hence
  $$y_n \ell_n \ell \in Y L_+ \subset Y.$$
  Taking the limit $n \to \infty$, this implies $y_0 \ell \in Y$. Since $\ell \in L$ is arbitrary, we have $y_0 L \subset Y$, and hence
  $$
  y_0 LU \subset Y.
  $$
  Again, $L = v A v^{-1}$ for some $v \in V$ or $L = V$ by Lemma \ref{lem.1psg}.
  If $L = v A v^{-1}$ for some $v \in V$, then $y_0 v AU v^{-1} \subset Y$. If $L = V$, then $y_0 N \subset Y$. By Lemma \ref{lem.ferte}, we have either $\ov{y_0 N} = \RFPM$ or $y_0 N$ is compact. Since $\ov{y_0 N} \subset X$, this completes the proof.
\qed


\section{The classification}\label{sec.fin}

In this last section, we prove our classification of $U$-orbit closures.
We restate Theorem \ref{thm.main} below:

\begin{theorem} \label{thm.mainrestate}
    Let $\M = \Ga \ba \H^3$ be a geometrically finite hyperbolic 3-manifold with a round Sierpi\'nski limit set. Then for any $x \in \FM$, one of the following holds:
    \begin{enumerate}
        \item $xU$ is closed.
        \item $\ov{xU} = xN$ which is compact.
        \item $\ov{xU} = xvHv^{-1} \cap \RFPM$ for some $v \in V$.
        \item $\ov{xU} = \RFPM$.
    \end{enumerate}
\end{theorem}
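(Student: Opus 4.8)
If $x\notin\RFPM$, then $xU$ is a properly immersed line and hence closed, so we are in case (1); assume therefore $x\in\RFPM$. By Lemma \ref{lem.zv}, either $x\in\BFM\cdot V$ or $x\in\RFM\cdot U$. In the first case write $x=zv$ with $z\in\BFM$ and $v\in V$; since $zH$ is compact, Corollary \ref{cor.uinsurface} gives $\ov{zU}=zH$, hence $\ov{xU}=\ov{zU}v=zHv=x(v^{-1}Hv)$, and as $zHv\subseteq\RFM\cdot V\subseteq\RFPM$ this is exactly case (3) with the element $v^{-1}\in V$. In the second case, replacing $x$ by a point of $xU\cap\RFM$ (which does not change $\ov{xU}$) we may assume $x\in\RFM$; if in addition $xU$ is closed we are in case (1). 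So from now on $x\in\RFM$, $xU$ is not closed, and we set $X:=\ov{xU}$, noting $X\subseteq\RFPM$ and that $X$ is $U$-invariant.

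\textbf{Producing a homogeneous subobject.} If $X$ contains a compact $U$-orbit, that orbit lies in a compact $N$-orbit (Lemma \ref{lem.ferte} together with Lemma \ref{lem.onlyrank2}), so $X$ meets a compact $N$-orbit and Theorem \ref{thm.clu2} applies; if $X$ contains no compact $U$-orbit, Theorem \ref{thm.xu3} applies, and if the conclusion there is that $X$ contains a compact $N$-orbit we once more invoke Theorem \ref{thm.clu2}. In every branch we conclude: either $xN$ is compact, or $X$ contains a $vAUv^{-1}$-orbit for some $v\in V$. If $xN$ is compact, then $\ov{xU}$ is a closed $U$-invariant, hence closed-subgroup-coset, subset of the $2$-torus $xN$; since $xU$ is not closed this forces $\ov{xU}=xN$, case (2). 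It remains to handle the case that $z(vAUv^{-1})\subseteq X$ for some $z\in\RFPM$ and $v\in V$.

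\textbf{From a $vAUv^{-1}$-orbit to a conjugated closed plane.} Since $U\subseteq vAUv^{-1}$ and $X$ is $U$-invariant, we may replace $z$ by $zu\in\RFM$ for a suitable $u\in U$ without changing the orbit $z(vAUv^{-1})$; thus assume $z\in\RFM$. Using $vAUv^{-1}=(vAv^{-1})U$ and that $U$ and $V$ commute, one checks $\ov{z(vAUv^{-1})}=\ov{(zv)AU}\,v^{-1}$, with $zv\in\RFM\cdot V\subseteq\RFPM$. By Benoist--Oh's Theorem \ref{thm.plane} applied to an element of $(zv)H$ lying in $\RFM$ (located via Lemma \ref{lem.zv}), either $(zv)$ lies in a closed $H$-orbit $yH$ with $y\in\RFM$, in which case Corollary \ref{cor.auminimal} gives $\ov{(zv)AU}=yH\cap\RFPM$, or $\ov{(zv)H}=\RFPM\cdot H$, in which case one argues $\ov{(zv)AU}=\RFPM$. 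In the latter case $X\supseteq\RFPM v^{-1}=\RFPM$, so $X=\RFPM$, case (4). In the former, $X\supseteq(yH\cap\RFPM)v^{-1}=yHv^{-1}\cap\RFPM$ (as $\RFPM$ is $V$-invariant), i.e. $X$ contains the $\RFPM$-part of the closed $vHv^{-1}$-orbit $yHv^{-1}$. Running the proof of Theorem \ref{thm.xwh} in the $V$-conjugated frame (the relevant transverse direction $vVv^{-1}$ is again $V$), we obtain $X=yHv^{-1}\cap\RFPM$ or $X=\RFPM$; and since $x\in X$, in the former case $xv\in yH$, whence $yHv^{-1}=xvHv^{-1}$ and we are in case (3).

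\textbf{Main obstacle.} The substantive work is this last step: turning a $vAUv^{-1}$-orbit contained in $X$ into a genuine closed $vHv^{-1}$-orbit contained in $X$. This has two delicate points. First, one must show $\ov{(zv)AU}$ is either $yH\cap\RFPM$ for a closed $H$-orbit or all of $\RFPM$ --- this requires Theorem \ref{thm.plane} and Corollary \ref{cor.auminimal} but also a treatment of the case $zv\notin\RFM$ and, crucially, ruling out an intermediate possibility in the dense-plane alternative $\ov{(zv)H}=\RFPM\cdot H$ by showing it already forces $\ov{(zv)AU}=\RFPM$. Second, one must verify that Theorem \ref{thm.xwh} (and with it Propositions \ref{prop.yh2} and \ref{prop.yh1}) is genuinely invariant under conjugation by $V$, i.e. that the definitions of $\RFM$, $\BFM$ and $\RFPM$ behave compatibly with such conjugation. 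Once these are in place, the rest is bookkeeping on top of Theorems \ref{thm.xwh}, \ref{thm.clu2} and \ref{thm.xu3}.
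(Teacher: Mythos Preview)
Your overall strategy --- reduce to $x\in\RFM$, use Theorems \ref{thm.clu2} and \ref{thm.xu3} to find a $vAUv^{-1}$-orbit in $X$, upgrade to a closed $H$-slice via Theorem \ref{thm.plane}, then finish with Theorem \ref{thm.xwh} --- is exactly the paper's. The difference is in how the two obstacles you identify are resolved.

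Your first obstacle is a genuine gap as stated. You assert that in the dense-plane alternative $\ov{(zv)H}=\RFPM\cdot H$ one has $\ov{(zv)AU}=\RFPM$, but this strong dichotomy for $AU$-orbit closures is neither proved in the paper nor needed. The paper's Lemma \ref{lem.ua3} argues instead: writing $H=AUK_H$ with $K_H=H\cap K$ compact, one gets $\ov{(zv)AU}\cdot K_H=\ov{(zv)H}=\RFPM\cdot H\supset\BFM$; since $\BFM$ is a finite union of compact $H$-orbits, $\ov{(zv)AU}$ must meet some $yH\subset\BFM$, and then $AU$-minimality (Corollary \ref{cor.auminimal}) gives $yH=yH\cap\RFPM\subset\ov{(zv)AU}$. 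So one only shows $\ov{(zv)AU}$ \emph{contains} a closed $H$-slice, which is precisely the hypothesis Theorem \ref{thm.xwh} needs. There is no ``intermediate possibility'' to rule out.

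Your second obstacle dissolves once you stop conjugating. Rather than verifying that Theorem \ref{thm.xwh} is invariant under $V$-conjugation, the paper uses $\ov{xU}\cdot v=\ov{(xv)U}$ and applies Propositions \ref{prop.yh2} and \ref{prop.yh1} directly to the point $xv$ and the closed orbit $(yv)H$, after arranging $xv\in\RFM$ (possible because $x\notin\BFM\cdot V$ forces $xv\notin\BFM\cdot V$, hence $xv\in\RFM\cdot U$ by Lemma \ref{lem.zv}). No conjugation-compatibility check is required.
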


In the rest of the section, let $\M$ and $\Ga$ be as in Theorem \ref{thm.mainrestate}. 
As observed in Theorem \ref{thm.xu3} and Theorem \ref{thm.clu2}, it sometimes happen that a $U$-orbit closure contains a $vAUv^{-1}$-orbit for some $v \in V$. In this regard, we investigate $AU$-orbit closures further.

\subsection*{$AU$-orbit closures} \label{sec.au}

Using the classification of $H$-orbit closures (Theorem \ref{thm.plane}), we obtain the following:

\begin{lemma} \label{lem.ua3}
    For any $x \in \RFPM$, there exist $y \in \RFPM$ and $v \in V$ such that $yv \in \RFM$, $yvH$ is closed, and $$ yvHv^{-1} \cap \RFPM \subset \overline{xAU}.$$
\end{lemma}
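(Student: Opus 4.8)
The plan is to dispatch the statement by cases according to Lemma~\ref{lem.zv}, reducing in each case to a situation where a point of $\RFM$ lies on, or accumulates onto, a closed $H$-orbit, at which point Corollary~\ref{cor.auminimal} computes the relevant $AU$-orbit closure exactly. First suppose $x\in\BFM\cdot V$, say $x=wv_0$ with $w\in\BFM$ and $v_0\in V$. Since $U$ and $V$ commute and $\ov{wU}=wH$ by Corollary~\ref{cor.uinsurface}, one gets $\ov{xU}=\ov{wU}\,v_0=wHv_0\subset\ov{xAU}$, so it suffices to take $y:=x$ and $v:=v_0^{-1}\in V$: then $yv=w\in\BFM\subset\RFM$, the orbit $yvH=wH$ is closed, and $yvHv^{-1}\cap\RFPM=wHv_0\cap\RFPM\subset wHv_0\subset\ov{xAU}$. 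If instead $x\in\RFM\cdot U$, write $x=x_0u_0$ with $x_0\in\RFM$ and $u_0\in U$; since $AU<G$ is a subgroup containing $U$ we have $u_0AU=AU$, hence $xAU=x_0AU$ and $\ov{xAU}=\ov{x_0AU}$, which reduces us to $x_0\in\RFM$.

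For $x_0\in\RFM$, I would split by Theorem~\ref{thm.plane}. If $x_0H$ is closed, Corollary~\ref{cor.auminimal} applied to $x_0\in x_0H\cap\RFPM$ gives $\ov{x_0AU}=x_0H\cap\RFPM$ directly, and one takes $y:=x_0$, $v:=e$. The substantive case is $\ov{x_0H}=\RFPM\cdot H$, where the goal is to locate a closed $H$-orbit inside $\ov{x_0AU}$. Since $\BFM\subset\RFPM\subset\ov{x_0H}$, fix $z\in\BFM$ and choose $h_n\in H$ with $x_0h_n\to z$; using the Iwasawa decomposition $H=AU\cdot(K\cap H)$, write $h_n=p_n\kappa_n$ with $p_n\in AU$ and $\kappa_n\in K\cap H\cong\op{SO}(2)$. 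As $K\cap H$ is compact, pass to a subsequence with $\kappa_n\to\kappa$, so that $x_0p_n=(x_0h_n)\kappa_n^{-1}\to z\kappa^{-1}$, giving $z\kappa^{-1}\in\ov{x_0AU}$. Because $\BFM\cdot H=\BFM$ we have $z\kappa^{-1}\in\BFM\subset\RFM$ with $(z\kappa^{-1})H=zH$ closed (indeed compact), so Corollary~\ref{cor.auminimal} yields $\ov{(z\kappa^{-1})AU}=(z\kappa^{-1})H\cap\RFPM=zH$; since $z\kappa^{-1}\in\ov{x_0AU}$ and $\ov{x_0AU}$ is closed and $AU$-invariant, this forces $zH\subset\ov{x_0AU}=\ov{xAU}$, and $y:=z\kappa^{-1}$, $v:=e$ finishes the proof.

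The main obstacle is exactly the case $\ov{x_0H}=\RFPM\cdot H$: the orbit $x_0AU$ is one dimension thinner than $x_0H$, so the density of $x_0H$ does not formally transfer to $\ov{x_0AU}$, and one cannot simply read off a closed $H$-orbit from density alone. The resolution is the observation above that the extra direction of $H$ beyond $AU$ is the compact group $K\cap H$, which can therefore be absorbed upon passing to a subsequence; granting this, the remainder is routine bookkeeping with the commuting subgroups $U$, $V$, $A$ and with the previously established descriptions of $U$- and $AU$-orbits inside closed $H$-orbits (Corollaries~\ref{cor.uinsurface} and~\ref{cor.auminimal}).
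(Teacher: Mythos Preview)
Your proof is correct and follows essentially the same approach as the paper's: both split into the cases of Lemma~\ref{lem.zv}, handle $x\in\BFM\cdot V$ via Corollary~\ref{cor.uinsurface}, and in the $\RFM\cdot U$ case exploit the Iwasawa decomposition $H=AU\cdot(K\cap H)$ with compactness of $K\cap H$ to pass from the $H$-orbit closure (classified by Theorem~\ref{thm.plane}) to the $AU$-orbit closure, finishing with Corollary~\ref{cor.auminimal}. The only difference is stylistic: you spell out the compactness argument with explicit sequences $h_n=p_n\kappa_n$, whereas the paper encapsulates it in the identity $\overline{xAU}\,K_H=\overline{xH}$.
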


\begin{proof}
    Suppose first that $x \in \RFM \cdot U$.  We take $u \in U$ so that $xu \in \RFM$. 
Recall $K = \op{PSU}(2)$ from \eqref{eqn.notation} and let $K_H = H \cap K$.
    Since $H=AUK_H$ and $K_H$ is compact, we have
    $$\overline{xAU}K_H = \overline{xH} = \overline{xuH}.$$
    By Theorem \ref{thm.plane}, either $xuH$ is closed or $\overline{xuH} = \RFPM \cdot H$.
    \begin{itemize}
        \item If $xuH$ is closed, then $$\overline{xAU} \cap (xuH \cap \RFPM) \neq \emptyset$$
        since $K_H < H$.
        By the $AU$-minimality of $xuH \cap \RFPM$ (Corollary \ref{cor.auminimal}), this proves the claim with $y = xu \in \RFM$ and $v = e$.

        \item Otherwise, we have
        $$\overline{xAU} K_H= \RFPM \cdot H.$$
        Let $y \in \BFM$. Then $yH \subset \RFM$ is closed. Hence $$\overline{xAU} \cap yH \neq \emptyset,$$
        from which we can deduce the claim as above.
    \end{itemize}

    Now suppose that $x \notin \RFM \cdot U$. By Lemma \ref{lem.zv}, $x \in \BFM \cdot V$, and hence $xv \in \BFM$ for some $v \in V$. We then have $xv H \subset \RFM$ and $xvH$ is compact. This implies that $xvH$ is $U$-minimal by Corollary \ref{cor.uinsurface}, and hence
    $$
    \ov{xAU} \supset \ov{xU} = \ov{xvU}v^{-1} = xvHv^{-1}.
    $$
    Setting $y = x$, this finishes the proof.
\end{proof}

\begin{corollary} \label{cor.ua2}
    For any $x \in \RFPM$ and $v_0 \in V$, there exist $y \in \RFPM$ and $v \in V$ such that $yv \in \RFM$, $yvH$ is closed, and $$
    yvHv^{-1} \cap \RFPM \subset \overline{xv_0AUv_0^{-1}}.
    $$
\end{corollary}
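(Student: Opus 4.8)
The plan is to reduce Corollary~\ref{cor.ua2} to Lemma~\ref{lem.ua3} by a change of base point followed by a translation by $v_0$, using nothing beyond the $V$-invariance of $\RFPM$ (which holds since $\RFPM$ is $N$-invariant).

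First I would apply Lemma~\ref{lem.ua3} to the point $xv_0$: since $\RFPM$ is $V$-invariant and $v_0\in V$, we have $xv_0\in\RFPM$, so the lemma produces $y'\in\RFPM$ and $v'\in V$ with $y'v'\in\RFM$, $y'v'H$ closed, and
\[
y'v'H(v')^{-1}\cap\RFPM\ \subset\ \ov{xv_0\,AU}.
\]
Next I would translate everything on the right by $v_0^{-1}$. Right translation by $v_0^{-1}$ is a homeomorphism of $\Ga\ba G$, hence commutes with closures and intersections, and $\RFPM v_0^{-1}=\RFPM$; therefore $\ov{xv_0\,AU}\,v_0^{-1}=\ov{xv_0\,AU\,v_0^{-1}}$ and $\bigl(y'v'H(v')^{-1}\cap\RFPM\bigr)v_0^{-1}=y'v'H(v')^{-1}v_0^{-1}\cap\RFPM$, which yields
\[
y'v'H(v')^{-1}v_0^{-1}\cap\RFPM\ \subset\ \ov{xv_0\,AU\,v_0^{-1}}.
\]

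Finally I would repackage the left-hand side in the required form by setting $v:=v_0v'\in V$ and $y:=y'v_0^{-1}$. Then $y\in\RFPM$ (again by $V$-invariance of $\RFPM$), $yv=y'v'\in\RFM$, $yvH=y'v'H$ is closed, and $yvHv^{-1}=y'v'H(v_0v')^{-1}=y'v'H(v')^{-1}v_0^{-1}$, so the displayed inclusion becomes exactly $yvHv^{-1}\cap\RFPM\subset\ov{xv_0\,AU\,v_0^{-1}}$, which is the assertion.

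I do not expect any genuine obstacle here: all the dynamical substance is contained in Lemma~\ref{lem.ua3} (and, through it, in the classification of geodesic planes, Theorem~\ref{thm.plane}), while Corollary~\ref{cor.ua2} follows by the elementary bookkeeping of conjugating by $v_0\in V$. The only point that needs a moment's care is verifying that after the translation one still lands in $\RFPM$ with a $V$-translate in $\RFM$ carrying a \emph{closed} $H$-orbit, and this is precisely what the choice $y=y'v_0^{-1}$, $v=v_0v'$ is designed to arrange.
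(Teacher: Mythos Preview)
Your proof is correct and follows essentially the same approach as the paper's own proof: apply Lemma~\ref{lem.ua3} to $xv_0\in\RFPM$, translate the resulting inclusion on the right by $v_0^{-1}$, and then set $y=y'v_0^{-1}$, $v=v_0v'$. The paper's argument is identical in substance, only slightly terser in justifying the $V$-invariance of $\RFPM$.
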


\begin{proof}
    By Lemma \ref{lem.ua3}, there exists $y' \in \RFPM$ and $v' \in V$ such that $y'v' \in \RFM$, $y'v'H$ is closed, and 
    $$
    y'v'Hv'^{-1} \cap \RFPM \subset \overline{xv_0 AU}.
    $$
    This implies that $$
    y'v_0^{-1} (v_0v'Hv'^{-1}v_0^{-1}) \cap \RFPM \subset 
    \overline{xv_0AUv_0^{-1}}.
    $$
    We set $y = y' v_0^{-1} \in \RFPM$ and $v = v_0v' \in V$. Then $yv = y'v' \in \RFM$ and $yvH = y'v'H$ is closed. Moreover, the above inclusion is rewritten as follows:
    $$yv H v^{-1} \cap \RFPM \subset \overline{x v_0 AU v_0^{-1}}.$$
    This completes the proof.
\end{proof}

We now show the intermediate classification of $U$-orbit closures based on results from Section \ref{sec.xwn} and Section \ref{sec.xwou} as follows:
\begin{prop}\label{prop.main1}
Let $x \in \RFPM$. Then one of the following holds:
    \begin{enumerate}
        \item $xU$ is closed.
        \item $\ov{xU}=xN$ which is compact.
        \item there exist $y \in \RFPM$ and $v \in V$ such that $yv \in \RFM$, $yvH$ is closed, and $$yvHv^{-1}\cap\RFPM \subset \ov{xU}.$$
    \end{enumerate}
\end{prop}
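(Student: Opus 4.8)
The plan is to run $X := \ov{xU}$ through the dichotomies established in Sections~\ref{sec.xwn} and~\ref{sec.xwou} and then invoke Corollary~\ref{cor.ua2}. Since $x \in \RFPM$ and $\RFPM$ is closed and $U$-invariant, we have $X \subset \RFPM$. If $xU$ is closed, case~(1) of the proposition holds, so from now on assume that $xU$ is not closed; we must reach case~(2) or~(3).

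\emph{Reduction to $x \in \RFM$.} By Lemma~\ref{lem.zv}, either $x \in \RFM \cdot U$ or $x \in \BFM \cdot V$. If $x \in \RFM \cdot U$, write $x = x_0 u$ with $x_0 \in \RFM$, $u \in U$; then $\ov{xU} = \ov{x_0U}$, $xU = x_0U$, and $xN = x_0N$, so we may replace $x$ by $x_0$ and assume $x \in \RFM$. If $x \notin \RFM \cdot U$, then $x = zv$ for some $z \in \BFM$ and $v \in V$; here $zH$ is compact, so $\ov{zU} = zH$ by Corollary~\ref{cor.uinsurface}, and as $U$ and $V$ commute, $X = \ov{zU}v = zHv$. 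Setting $y := x = zv \in \RFPM$ and $v' := v^{-1} \in V$, we have $yv' = z \in \BFM \subset \RFM$, $yv'H = zH$ closed, and $yv'H(v')^{-1} = zHv = X \subset \RFPM$, hence $yv'H(v')^{-1} \cap \RFPM = X \subset \ov{xU}$; thus case~(3) holds (with $v'$ in the role of the $V$-element there). So from now on we may assume $x \in \RFM$.

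\emph{Producing a conjugated $AU$-orbit.} I claim that either $xN$ is compact, or $X$ contains a $vAUv^{-1}$-orbit for some $v \in V$. Suppose first that $X$ contains a compact $U$-orbit $zU$. Since every parabolic limit point has rank $2$, every compact $U$-orbit lies in a compact $N$-orbit (Lemma~\ref{lem.onlyrank2}, Lemma~\ref{lem.ferte}); so $zN$ is compact and $X$ meets the compact $N$-orbit $zN$, whence Theorem~\ref{thm.clu2} gives that $xN$ is compact or $X$ contains a $vAUv^{-1}$-orbit. Suppose instead that $X$ contains no compact $U$-orbit. By Theorem~\ref{thm.xu3}, $X$ contains a compact $N$-orbit or a $vAUv^{-1}$-orbit; in the former case $X$ meets a compact $N$-orbit, so Theorem~\ref{thm.clu2} again gives that $xN$ is compact or $X$ contains a $vAUv^{-1}$-orbit. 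This proves the claim.

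\emph{Conclusion.} If $xN$ is compact, then $X$ is the closure, inside the torus $xN$, of an orbit of the linear $U$-flow, so $X = xU$ or $X = xN$; as $xU$ is not closed, $X = xN$ and case~(2) holds. Otherwise $X$ contains a $vAUv^{-1}$-orbit for some $v \in V$; choosing a point $w$ on this orbit we get $w \in X \subset \RFPM$ and $\ov{w\,vAUv^{-1}} \subset X$ (since $X$ is closed). Applying Corollary~\ref{cor.ua2} to $w$ and $v$ yields $y \in \RFPM$ and $v' \in V$ with $yv' \in \RFM$, $yv'H$ closed, and $yv'H(v')^{-1} \cap \RFPM \subset \ov{w\,vAUv^{-1}} \subset X = \ov{xU}$, which is case~(3). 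The point that most needs care is the bookkeeping in the reduction — verifying that $xN$, $xU$, and $\ov{xU}$ are genuinely unchanged and tracking the conjugating $V$-elements through the identity $X = zHv$ — together with the (immediate) check that the $vAUv^{-1}$-orbit returned by Theorems~\ref{thm.clu2} and~\ref{thm.xu3} is based in $\RFPM$, which holds because that orbit lies in $X \subset \RFPM$.
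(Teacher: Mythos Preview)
Your proof is correct and follows essentially the same approach as the paper's: reduce to $x \in \RFM$ via Lemma~\ref{lem.zv} (handling the $\BFM \cdot V$ case directly using Corollary~\ref{cor.uinsurface}), then feed $X$ through Theorem~\ref{thm.xu3} and Theorem~\ref{thm.clu2} to obtain either a compact $xN$ or a $vAUv^{-1}$-orbit inside $X$, and finish with Corollary~\ref{cor.ua2}. The only organizational difference is that the paper first excludes both $\ov{xU}=xU$ and $\ov{xU}=xN$ before running the dichotomy, whereas you postpone the torus argument for the ``$xN$ compact'' case to the end; the substance is identical.
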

\begin{proof}

    We first note that $xN$ is compact or $\ov{xN} = \RFPM$ by Lemma \ref{lem.ferte}. This implies that if $\ov{xU} = xN$, and hence $xN$ is closed, then $xN$ is compact. Therefore, it suffices to show that if $\ov{xU}$ is neither $xU$ nor $xN$, then (3) holds.
    
    Suppose that $\ov{xU}$ is neither $xU$ nor $xN$. If $x \in \BFM \cdot V$, then $xv \in \BFM \subset \RFM$ for some $v \in V$, and hence $xvH$ is a compact $U$-minimal set by Corollary \ref{cor.uinsurface}. Therefore, $$xvH = \ov{xvU} = \ov{xU}v,$$
    from which (3) follows.

    Now assume that $x \notin \BFM \cdot V$. By Lemma \ref{lem.zv}, $x \in \RFM \cdot U$ and hence we may assume that $x \in \RFM$ by replacing $x$ with an element of $xU$.
    We claim that $\ov{xU}$ contains a $v_0AUv_0^{-1}$-orbit for some $v_0 \in V$. Once we show the claim, we apply Corollary \ref{cor.ua2} and (3) follows, finishing the proof.

    To see the claim, first consider the case that $\ov{xU}$ does not contain any compact $U$-orbit. By Theorem \ref{thm.xu3},
    $\ov{xU}$ contains a $v_0AUv_0^{-1}$-orbit for some $v_0 \in V$ or a compact $N$-orbit. In the former case, we are done. If $\ov{xU}$ contains a compact $N$-orbit, it follows from Theorem \ref{thm.clu2} that $\ov{xU}$ contains a $v_0AUv_0^{-1}$-orbit for some $v_0 \in V$ or $xN$ is compact. On the other hand, since $\ov{xU}$ is neither $xU$ nor $xN$, $xN$ cannot be compact. Indeed, if $xN$ were compact, then $xU$ is either compact or dense in $xN$, which is not the case here. Therefore, the claim follows.

    Now suppose that $\ov{xU}$ contains a compact $U$-orbit, say $yU$. If $g \in G$ is such that $y = [g]$, then $g^+$ is a parabolic limit point, and hence $yN$ is compact by Lemma \ref{lem.ferte}. Hence $\ov{xU}$ meets a compact $N$-orbit and the claim follows from Theorem \ref{thm.clu2} as above. This finishes the proof.
\end{proof}

Combined with the results from Section \ref{sec.xwh}, we now complete the classification.

\subsection*{Proof of Theorem \ref{thm.mainrestate}}
    Let $x \in \FM$. If $x \notin \RFPM$, then it is easy to see that $xU$ is closed, noting that the $\Ga$-action on $\H^3 \cup \widehat \C$ is a non-elementary convergence action with the limit set $\La$.

    Hence, we assume that $x \in \RFPM$. 
    Suppose first that $x \in \BFM \cdot V$. Then $xv \in \BFM$ for some $v \in V$, and hence
    $$
    \ov{xU} = \ov{xvU}v^{-1} = xvHv^{-1}
    $$
    since $xvH \subset \BFM \subset \RFM$ is a compact $U$-minimal set by Corollary \ref{cor.uinsurface}. Therefore, (3) follows in this case.

    It remains to consider the case $x \notin \BFM \cdot V$. By Lemma \ref{lem.zv}, we have $x \in \RFM \cdot U$. 
    By Proposition \ref{prop.main1}, it suffices to consider the case that there exist $y \in \RFPM$ and $v \in V$ such that $yv \in \RFM$, $yvH$ is closed, and
    $$yvHv^{-1} \cap \RFPM \subset \ov{xU}.$$
    Since $x \notin \BFM \cdot V$, $xv \notin \BFM \cdot V$ and hence $xv \in \RFM \cdot U$ by Lemma \ref{lem.zv}. Replacing $x$ with an element of $xU$, we may assume that $xv \in \RFM$.
    
    If $yv \in \BFM$, then
    $$
    yvH = yvH \cap \RFPM \subset \ov{xvU}.
    $$
    By Proposition \ref{prop.yh2}, this implies that either
    $$
    \ov{xU}v = \ov{xvU} = yvH \quad \text{or} \quad \ov{xU}v = \RFPM.
    $$
    In any case, (3) or (4) follows.

    If $yv \notin \BFM$, it then follows from Proposition  \ref{prop.yh1} that either 
    $$
    \ov{xU}v = yvH \cap \RFPM \quad \text{or} \quad \ov{xU}v = \RFPM.
    $$
    Again, (3) or (4) follows in either case. This completes the proof.
\qed

%
%

\medskip
\bibliographystyle{plain} 

\end{document}